\documentclass[oneside, 10pt]{amsart}
\usepackage{amsaddr}
\usepackage{amssymb}
\usepackage[OT2, T1]{fontenc}
\usepackage[papersize={158mm, 240mm}, body={27pc, 43pc}, hcentering, headsep=18pt, footskip=36pt, vcentering]{geometry}
\usepackage[hidelinks, backref=page]{hyperref}
\usepackage{lmodern}
\usepackage{tabularray}

\newtheorem{theorem}{Theorem}[section]
\newtheorem{lemma}[theorem]{Lemma}
\newtheorem{proposition}[theorem]{Proposition}

\theoremstyle{definition}
\newtheorem{conjecture}[theorem]{Conjecture}

\newtheorem{example}[theorem]{Example}

\newtheorem{remark}[theorem]{Remark}

\renewcommand{\C}{\mathrm{C}}
\renewcommand{\d}{\operatorname{d}}
\newcommand{\EE}{\mathcal{E}}
\newcommand{\F}{\mathbb{F}}
\newcommand{\I}{\mathrm{I}}
\newcommand{\ns}{\operatorname{ns}}

\newcommand{\Q}{\mathbb{Q}}
\newcommand{\Reg}{\operatorname{Reg}}
\newcommand{\tors}{\operatorname{tors}}
\newcommand{\TT}{\mathcal{T}}
\newcommand{\Z}{\mathbb{Z}}

\DeclareSymbolFont{cyrletters}{OT2}{wncyr}{m}{n}
\DeclareMathSymbol{\Sha}{\mathalpha}{cyrletters}{"58}

\title[Torsion and Tamagawa numbers]{Tamagawa numbers and torsion of elliptic curves over function fields}

\author{David Kurniadi Angdinata}
\address{University of East Anglia, School of Engineering, Mathematics and Physics, Norwich Research Park, Norwich NR4 7TJ, United Kingdom}

\author{Mentzelos Melistas}
\address{University of Twente, Department of Applied Mathematics, Drienerlolaan 5, 7522 NB Enschede, The Netherlands}

\begin{document}
\raggedbottom

\begin{abstract}
We study the divisibilities of Tamagawa numbers $ c(E) $ of elliptic curves $ E $ over global function fields $ K $ in terms of their torsion subgroups $ E(K)_{\tors} $. In particular, for a non-isotrivial elliptic curve $ E / k(t) $, where $ k $ is a finite field of characteristic greater than $ 3 $, we prove that $ |E(k(t))_{\tors}|^2 $ divides $ c(E) $, except possibly in four exceptional torsion families. More specifically, we give a complete characterisation of divisibilities for $ c(E) $ in each torsion family, and provide explicit examples to prove that they are best possible. Over a general global function field, we also prove that a rational point of prime order $ 5 \le N \le 101 $ on $ E / K $ forces $ N^2 $ to divide $ c(E) $, which motivates our result. Finally, we formulate a conjecture on the leading coefficient of the $ L $-function of $ E / K $, motivated by the integrality of Birch--Swinnerton-Dyer quotients.
\end{abstract}

\maketitle

\section{Introduction}

Let $ K $ be the function field of a smooth projective geometrically irreducible curve $ C $ of genus $ g $ over a finite field $ k = \F_q $ of characteristic $ p $. For each place $ v $ of $ K $, let $ K_v $ and $ k_v $ be the completion and residue field at $ v $ respectively. For an elliptic curve $ E / K $, its \emph{local Tamagawa number} $ c_v(E) $ is the finite index of the subgroup of non-singular reduction in $ E(K_v) $ \cite[Corollary VII.6.2]{Sil09}. Equivalently, this is the number of $ k_v $-rational connected components of the special fibre of its N\'eron model $ \EE $ at $ v $ \cite[Corollary IV.9.2]{Sil94}. The \emph{Tamagawa number} is then the product $ c(E) := \prod_v c_v(E) $, which satisfies $ c_v(E) = 1 $ for all but finitely many places $ v $ of $ K $.

\pagebreak

The study of $ c(E) $ is motivated by the Birch--Swinnerton-Dyer (BSD) conjecture. Recall that the Mordell--Weil--Lang--N\'eron theorem asserts that $ E(K) $ is a finitely generated abelian group, so $ E(K) \cong \Z^{r'} \oplus E(K)_{\tors} $ for some integer $ r' \ge 0 $, called the \emph{rank}, and some finite group $ E(K)_{\tors} $, called the \emph{torsion subgroup}. Let $ r $ be the order of vanishing of the \emph{$ L $-function} $ L(E, s) $ at $ s = 1 $, and define its normalised leading coefficient to be
$$ L^*(E) := \left.\dfrac{1}{(\log q)^r \cdot r!} \cdot \dfrac{\d^rL(E, s)}{\d s^r}\right|_{s = 1}. $$
The BSD conjecture then predicts $ r = r' $, and that
$$ L^*(E) = \dfrac{\Reg(E) \cdot |\Sha(E)| \cdot c(E)}{q^{h + g - 1} \cdot |E(K)_{\tors}|^2}, $$
where $ \Sha(E) $ is the \emph{Tate--Shafarevich group} and $ \Reg(E) $ is the \emph{regulator} defined in terms of the canonical N\'eron--Tate height divided by $ \log q $. The \emph{height} $ h $ of $ E $ is the degree of the invertible sheaf $ \omega $ on $ C $, given as the pullback of the sheaf of relative differentials on $ \EE $ by the zero section, which is positive if $ E $ is not \emph{isotrivial}. Refer to Gross's notes \cite[Conjecture 2.10(2)]{Gro11} or Ulmer's paper \cite[Page 1073]{Ulm19} for more details.

This paper studies cancellations between $ c(E) $ and $ |E(K)_{\tors}|^2 $. Lorenzini initiated the analogous problem over number fields, and gave precise cancellations for $ \Q $ \cite{Lor11}. There are related works by Krumm \cite{Kru13}, by Najman \cite{Naj17}, by Barrios and Roy \cite{BR22}, by Trbovi\'c \cite{Trb22}, and by the first \cite{Ang25} and second \cite{Mel22, Mel25} named authors. Over global function fields, Ulmer proved that
$$ \dfrac{\Reg(E) \cdot D(E)}{|E(K)_{\tors}|^2} \in \Z, $$
where $ D(E) $ is the absolute discriminant of the trivial lattice over $ k $ of the minimal proper regular model \cite[Proposition 9.1]{Ulm14}, which was later generalised to Jacobians by Berger et al \cite[Proposition 7.2]{BHPPPSSU20}.

\begin{remark}
\label{rem:ulmer}
The quantity $ D(E) $ is related to $ c(E) $, but the implied equality $ D(E) = c(E) $ \cite[Proposition 7.3]{BHPPPSSU20} and the resulting integrality claims \cite[Equations (3.2.1) and (3.2.2)]{Ulm19} are false in general. For instance, the elliptic curve $ E / \F_3(t) $ given by $ y^2 + xy + (t^3 - t + 1)y = x^3 $ has
$$ \dfrac{\Reg(E) \cdot c(E)}{|E(\F_3(t))_{\tors}|^2} = \tfrac{1}{3}. $$
\end{remark}

The following result concerns $ c(E) $ itself outside four torsion families. In what follows, let $ \C_n $ denote the cyclic group of order $ n \ge 1 $.

\begin{theorem}
\label{thm:rational}
Let $ k $ be a finite field of characteristic $ p \ne 2, 3 $, and let $ E / k(t) $ be a non-isotrivial elliptic curve with $ |E(k(t))_{\tors}| \notin \{2, 3, 4\} $. Then
$$ |E(k(t))_{\tors}|^2 \ \text{divides} \ c(E). $$
\end{theorem}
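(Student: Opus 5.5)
Since $E$ is non-isotrivial and $p \ne 2, 3$, the torsion subgroup $E(k(t))_{\tors}$ is one of the groups appearing in the function-field analogue of Mazur's theorem. Because $k(t)$ has genus zero, the relevant modular curves $X_1(N)$ and $X_1(2, 2N)$ must be rational over $k$. This rules out all $\C_N$ with $N \ge 11$, apart from the cases $N = 11, \dots$ where the genus is positive, since a non-constant $k$-morphism $\mathbb{P}^1 \to X_1(N)$ cannot exist when $X_1(N)$ has positive genus. The possible groups are therefore $\C_N$ for $N \le 10$ or $N = 12$, together with $\C_2 \times \C_{2N}$ for $N \le 4$. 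When $k$ contains the relevant roots of unity, we must also allow $\C_3 \times \C_3$, $\C_3 \times \C_6$, $\C_4 \times \C_4$, $\C_5 \times \C_5$ and $\C_6 \times \C_6$, corresponding to the genus-zero curves $X(3)$, $X(4)$, $X(5)$, and so on. The excluded orders $2, 3, 4$ are exactly the families with a positive-dimensional moduli of Weierstrass models, namely $\C_2$, $\C_3$, $\C_4$ and $\C_2 \times \C_2$. Every remaining family is parametrised by a genus-zero modular curve carrying a universal Tate normal form. The plan is to handle each remaining family explicitly.

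For each family we write the universal curve in Tate normal form $y^2 + (1 - c)xy - by = x^3 - bx^2$, with $b, c$ rational functions of a parameter $s \in \mathbb{P}^1$. Then $E \cong \EE_{f}$ for some non-constant $f \in k(t)$, obtained by pulling back the universal elliptic surface along $s = f(t)$, possibly after a quadratic twist. For $N \ge 4$ a twist does not preserve a point of order $N$ unless the twist is trivial, so no twist is needed. For each family we compute the discriminant $\Delta(s)$, which factors as a product of powers of the cusp-defining polynomials $\prod_i \phi_i(s)^{a_i}$. At each cusp, the universal curve has multiplicative reduction of type $\I_{a_i}$, and we record whether it is split or non-split. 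Pulling back along $f$, a place $v$ of $k(t)$ where $f$ meets a cusp $\phi_i$ with ramification index $e$ gives reduction of type $\I_{e a_i}$. The corresponding local Tamagawa number is $e a_i$ in the split case, and $1$ or $2$ in the non-split case. Cusps at which the universal fibre is additive become multiplicative or good after pullback only under special ramification, and these must be tracked too. Tate's algorithm in characteristic $p > 3$ makes all of these computations routine.

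The key step is then a counting argument. Over $k(t)$, the sum over places $v$ above a given cusp of $\deg(v)\, e_v$ equals $\deg f$, and the same total degree lies over each cusp. Since the point of order $N$ specialises to the component group at split multiplicative places, and the cuspidal widths $a_i$ are multiples of divisors of $N$, we want to show that $\prod_v c_v$ is divisible by $N^2$, or by $|T|^2$ for non-cyclic $T$. We use several cusps simultaneously. For example, for $\C_5$ the cusps $s = 0$ and $s = \infty$ have widths $5$ and $5$, and each lies under at least one place. Hence each contributes a factor $5$, giving $25$. For composite $N$ we combine prime-power contributions from different cusps. The delicate cases are places of degree $>1$ and ramification indices that interact with non-split cusps, where a Galois-conjugate place can have $c_v = 1$ or $2$. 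In those cases, the points $\infty$ of $\mathbb{P}^1$ and the $k$-rationality of the cusps must be used.

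I expect the main obstacle to be the composite and non-cyclic families, such as $\C_6$, $\C_8$, $\C_{10}$, $\C_{12}$ and $\C_2 \times \C_{2N}$. There several cusps have small widths, some are non-split, and some are irrational over $k$. Showing that the product still reaches $|T|^2$ requires care with $2$-parts coming from non-split $\I_n$ components, where $c_v = 2$ exactly when $n$ is even. I would first try to show that a point of order $N$ forces split reduction at the $k$-rational cusps of maximal width. If that fails, I would fall back on the fact that the full prime-to-$p$ torsion injects into the component group at any place with multiplicative reduction whose special fibre contains the torsion point's reduction off the identity component. I would then combine this with $\deg f \ge 1$ to guarantee enough ramified places.
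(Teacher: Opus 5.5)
Your overall route is the paper's route: classify the torsion families over $k(t)$, realise each as a Tate normal form pulled back along a non-constant $f \in k(t)$, and collect local factors at places above the cusps. At split multiplicative places you use $c_v(E) = v(\Delta)$, and elsewhere you use the evenness of $c_v(E)$ when $v(\Delta)$ is even. In the paper, Theorem~\ref{thm:rational} is read off from the table in Theorem~\ref{thm:tamagawa}, which is proved family by family in exactly this way.

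The genuine gap is your classification step. The argument that a non-constant map $\mathbb{P}^1 \to X_1(N)$ forces genus zero only controls torsion of order prime to $p$. A rational point of order $p$ in characteristic $p$ is governed by an Igusa curve rather than by $X_1(p)$, and these have genus zero for $p \le 11$. So even with $p \ne 2, 3$, your list omits four families from Theorem~\ref{thm:torsion}: $(\C_{10} \oplus \C_2, 5)$, $(\C_{15}, 5)$, $(\C_{14}, 7)$ and $(\C_{11}, 11)$. Your sentence about $N \ge 11$ shows this confusion, since $\C_{11}$ does occur when $p = 11$. The statement allows $p = 5, 7, 11$, so as written your proof says nothing about, for example, a non-isotrivial $E / \F_{121}(t)$ with a point of order $11$. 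These families need their own explicit parameterisations and their own cusp computations; the paper obtains $10^6$, $5^2 \cdot 15^4$, $7^3 \cdot 14^3$ and $11^5$ respectively. They do not follow from the prime-to-$p$ families.

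Three smaller corrections follow.

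\emph{Your fallback is false as stated.} Prime-to-$p$ torsion need not inject into the component group, because $E_0(k(t)_v)/E_1(k(t)_v)$ is cyclic of order $q_v - 1$ or $q_v + 1$ and can contain $n$-torsion. For example, at a place with $v(f - 1) > 0$ in the $\C_3 \oplus \C_3$ family, $a_3 = 27$ is a unit and $(0,0)$ reduces to a non-singular point. There neither the Tate normal form shortcut nor a single-point argument settles splitness. The tool the paper uses at such cusps, which also arise for $\C_4 \oplus \C_4$, $\C_5 \oplus \C_5$ and $\C_6 \oplus \C_3$, is Lemma~\ref{lem:noncyclic}. If $E[n] \subseteq E(k(t))$ with $n > 2$ and $p \nmid n$, then non-split reduction would give $n^2 = |E(k(t)_v)[n]| \le c_v(E) \cdot |E_0(k(t)_v)[n]| \le 2n$, a contradiction.

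\emph{$\C_6 \oplus \C_6$ does not occur.} $X(6)$ has genus one, so including it is wrong, though harmless.

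\emph{$\C_4$ is not excluded for having a positive-dimensional family.} It is the one-parameter family $\TT(0, f)$; it is excluded because its sharp bound is $2^3$ rather than $4^2$.
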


\pagebreak

Theorem \ref{thm:rational} follows from the following result by inspection.

\begin{theorem}
\label{thm:tamagawa}
Let $ k $ be a finite field of characteristic $ p $, and let $ E / k(t) $ be a non-isotrivial elliptic curve with $ E(k(t))_{\tors} \ne 0 $. Then the tuple $ (E(k(t))_{\tors}, p) $ falls into one of the following families. Furthermore, the integer $ c > 0 $ is the largest integer such that $ c \mid c(E) $ for all elliptic curves $ E / k(t) $ in each family.

\bigskip

\begin{center}
\begin{tblr}{|c|c|c|}
\hline
$ E(k(t))_{\tors} $ & $ p $ & $ c $ \\
\hline
$ \C_2 $ & any & $ 2 $ \\
\hline
$ \C_3 $ & any & $ 3 $ \\
\hline
\SetCell[r=2]{m} $ \C_4 $ & $ \ne 2 $ & $ 2^3 $ \\
\cline{2-3}
& $ 2 $ & $ 2^2 $ \\
\hline
$ \C_5 $ & any & $ 5^2 $ \\
\hline
\SetCell[r=3]{m} $ \C_6 $ & $ \ne 2, 3 $ & $ 6^2 $ \\
\cline{2-3}
& $ 2 $ & $ 2 \cdot 6 $ \\
\cline{2-3}
& $ 3 $ & $ 3 \cdot 6 $ \\
\hline
$ \C_7 $ & any & $ 7^3 $ \\
\hline
\SetCell[r=2]{m} $ \C_8 $ & $ \ne 2 $ & $ 8^3 $ \\
\cline{2-3}
& $ 2 $ & $ 8^2 $ \\
\hline
\SetCell[r=2]{m} $ \C_9 $ & $ \ne 3 $ & $ 3 \cdot 9^3 $ \\
\cline{2-3}
& $ 3 $ & $ 9^3 $ \\
\hline
\SetCell[r=3]{m} $ \C_{10} $ & $ \ne 2, 5 $ & $ 5 \cdot 10^3 $ \\
\cline{2-3}
& $ 2 $ & $ 2 \cdot 10^2 $ \\
\cline{2-3}
& $ 5 $ & $ 5^2 \cdot 10^2 $ \\
\hline
\SetCell[r=3]{m} $ \C_{12} $ & $ \ne 2, 3 $ & $ 12^4 $ \\
\cline{2-3}
& $ 2 $ & $ 2^2 \cdot 12^2 $ \\
\cline{2-3}
& $ 3 $ & $ 3 \cdot 6 \cdot 12^2 $ \\
\hline
\end{tblr}
\quad
\begin{tblr}{|c|c|c|}
\hline
$ E(k(t))_{\tors} $ & $ p $ & $ c $ \\
\hline
$ \C_2 \oplus \C_2 $ & $ \ne 2 $ & $ 2^3 $ \\
\hline
$ \C_3 \oplus \C_3 $ & $ \ne 3 $ & $ 3^4 $ \\
\hline
$ \C_4 \oplus \C_2 $ & $ \ne 2 $ & $ 4^2 \cdot 2^2 $ \\
\hline
\SetCell[r=2]{m} $ \C_6 \oplus \C_2 $ & $ \ne 2, 3 $ & $ 2^3 \cdot 6^3 $ \\
\cline{2-3}
& $ 3 $ & $ 6^3 $ \\
\hline
$ \C_8 \oplus \C_2 $ & $ \ne 2 $ & $ 2^{16} $ \\
\hline
$ \C_4 \oplus \C_4 $ & $ \ne 2 $ & $ 4^6 $ \\
\hline
\SetCell[r=2]{m} $ \C_6 \oplus \C_3 $ & $ \ne 2, 3 $ & $ 3^4 \cdot 6^4 $ \\
\cline{2-3}
& $ 2 $ & $ 6^4 $ \\
\hline
$ \C_5 \oplus \C_5 $ & $ \ne 5 $ & $ 5^{12} $ \\
\hline
$ \C_{11} $ & $ 11 $ & $ 11^5 $ \\
\hline
\SetCell[r=2]{m} $ \C_{14} $ & $ 2 $ & $ 2 \cdot 14^3 $ \\
\cline{2-3}
& $ 7 $ & $ 7^3 \cdot 14^3 $ \\
\hline
\SetCell[r=2]{m} $ \C_{15} $ & $ 3 $ & $ 3 \cdot 15^4 $ \\
\cline{2-3}
& $ 5 $ & $ 5^2 \cdot 15^4 $ \\
\hline
$ \C_{18} $ & $ 2 $ & $ 2^2 \cdot 3 \cdot 18^3 $ \\
\hline
$ \C_{10} \oplus \C_2 $ & $ 5 $ & $ 10^6 $ \\
\hline
$ \C_{10} \oplus \C_5 $ & $ 2 $ & $ 10^{12} $ \\
\hline
$ \C_{12} \oplus \C_2 $ & $ 3 $ & $ 6^2 \cdot 12^4 $ \\
\hline
\end{tblr}
\end{center}
\end{theorem}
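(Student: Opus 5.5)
Since $E(k(t))_{\tors}$ is a nontrivial torsion subgroup, $E$ is a pullback of a universal elliptic curve over the relevant modular curve $X_1(N)$ or $X_1(M,N)$. I would therefore proceed family by family using explicit universal models. For each group $T$ in the table (the possible groups over $k(t)$ being known from the classification of torsion over function fields of genus zero, with the characteristic restrictions coming from which modular curves have genus zero in characteristic $p$, or for $p \mid N$ from Igusa curves), I would write down the Tate normal form $E_{b,c}: y^2+(1-c)xy-by=x^3-bx^2$. Here $b,c$ are given by rational functions of a parameter $s$ on the genus-zero modular curve, and a point of order $N$ on $E/k(t)$ corresponds to a non-constant $s\in k(t)$; non-constancy follows from non-isotriviality. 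The discriminant factors as $\Delta=\pm\prod_i f_i(s)^{a_i}$, where the $f_i$ are the cusps of the modular curve, and the singular fibres of $E$ sit above the places $v$ where $s(v)$ hits a cusp. At such a place, with ramification index $e$ of $s$ over that cusp, the fibre is of type $\I_{ea_i}$ when the cusp is multiplicative. Otherwise, possibly after a quadratic twist, it is of type $\I^*$, or additive of small conductor when $p\mid N$.

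The key local step is to compute $c_v$ at each cusp type. For split $\I_n$ we have $c_v=n$. For non-split $\I_n$, $c_v$ is $1$ or $2$ according to the parity of $n$, but here the torsion point reduces into a specified component, which forces split multiplicative reduction whenever the point meets a non-identity component of order $>2$. For $\I_n^*$ we have $c_v\in\{2,4\}$. Summing over all places above a cusp multiplies the local contributions, and every cusp of $X_1(N)$ has at least one place of $k(t)$ above it. I would then give a lower bound of the form $\prod_{\text{cusps}} (\text{width})$, refined by a divisibility argument: a place of degree $d$ over a cusp of width $w$ contributes $ew$ with the relevant component group cyclic of order divisible by $w$. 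Cusps whose field of definition is larger than $k$ (e.g.\ those involving $\mu_N$) force the place above them to have a degree compatible with the splitting, and this is where the extra factors such as $5$ in $5\cdot10^3$ or $3$ in $3\cdot 9^3$ arise. The degree $\deg s\ge1$ together with the sum over cusps of $\deg(s^*(\text{cusp}))$ then gives the claimed $c$. The cases $p\mid |T|$ require separate treatment: the cusps collapse in characteristic $p$, some become supersingular or additive, and the powers drop accordingly (e.g.\ $\C_4$ at $p=2$).

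For optimality, in each family I would take $s=t$, or the simplest admissible degree-one or degree-two map. Then every cusp is hit exactly once with $e=1$ and minimal degree, and a Tate's algorithm computation shows $c(E)$ equals the claimed $c$, or that the gcd over two such examples equals $c$. The main obstacle will be the local analysis at cusps that are not $k$-rational, together with the small-characteristic and additive cases. These require careful Tate's-algorithm computations with field-of-definition bookkeeping (e.g.\ $\C_5\oplus\C_5$ needing $\mu_5\subset k$), and I expect to handle them by explicit case-by-case computation, ideally computer-assisted.
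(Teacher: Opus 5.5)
Your outline is the paper's route: explicit parameterisations from the Cox--Parry/McDonald classification, Tate's algorithm at a place above each factor of $\Delta$ in the parameter, and explicit examples for sharpness. But the one general counting mechanism you give for the lower bound is wrong as stated, and the places where it fails are exactly where the paper needs extra tools.

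A place of $k(t)$ lies over a closed point of the modular curve over $k$, so Galois-conjugate cusps can share a single place. Since $c$ must divide $c(E)$ for every $k$ in the family, the uniform bound counts one width per Galois orbit of cusps over the smallest fields in the family, not one per geometric cusp. The degree of the place is irrelevant: split $\I_n$ has $c_v = n$ whatever $\deg v$ is.
\begin{itemize}
\item For $\C_9$ the product over geometric cusps is $9^3 \cdot 3^2 = 9^4$. However, the paper's example over $\F_2(t)$ with $f = t$ has $c(E) = 3 \cdot 9^3$, because the two width-$3$ cusps $f^2 - f + 1 = 0$ are conjugate over $\F_2$ and give only one place.
\item Likewise $5 \cdot 10^3 = 10 \cdot 10 \cdot 5 \cdot 5 \cdot 2$. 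The two factors $5$ come from the rational cusps $2f = 1$ and $f = \infty$, while the conjugate width-$2$ pair $f^2 - 3f + 1 = 0$ contributes a single $2$ (over $\F_3$ it is irreducible). So non-rational cusps lower the count rather than produce extra factors.
\item Widths also undercount. For $\C_4$ the irregular cusp $f = \infty$ of $X_1(4)$ has width $1$. Yet with $e = -v(f)$ it gives $\I_e$ for $e$ even and $\I_e^*$ for $e$ odd, so it always contributes a factor $2$; this is where $2^3$ rather than $4$ comes from.
\end{itemize}
The $\C_4$ case and the twisted families $\C_2$ and $\C_2 \oplus \C_2$ need something like Lemma \ref{lem:exceptional}, producing $\I_n^*$ with $n > 0$ (for $\I_0^*$ one can have $c_v = 1$). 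In characteristic $2$, where the relevant twists are Artin--Schreier, you also need a normalisation such as the paper's choice of $S$ maximising $\min(v(S^2 + S + A), 0)$.

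Your split-reduction criterion also needs supplementing. The paper gets splitness at most places directly from Tate's step $2$: $v(a_1) = 0 < v(a_2)$ makes $T^2 + a_1T - a_2 \equiv T(T + a_1)$ split, so $c_v = v(\Delta)$ (Lemma \ref{lem:odd}). Your argument via the component of the torsion point needs the image to have order $> 2$, which for even $N$ means also checking $2P$. More seriously, in the full-level families there are places where the marked point $(0,0)$ reduces to a smooth point, so no given point visibly meets a non-identity component:
\begin{itemize}
\item $f = 1$ for $\C_3 \oplus \C_3$, where $a_3 = 27$;
\item $f = 0$ for $\C_4 \oplus \C_4$, where $a_3 = 1/16$;
\item $f = \zeta_5^i + \zeta_5^{-i}$ for $\C_5 \oplus \C_5$ and $\C_{10} \oplus \C_5$.
\end{itemize}
There the paper uses the Weil pairing to get $\zeta_n \in k$, and the count $n^2 = |E(K_v)[n]| \le c_v(E)\,|E_0(K_v)[n]| \le 2n$ to exclude non-split reduction (Lemma \ref{lem:noncyclic}).

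Finally, $s = t$ does not suffice for sharpness in general. The example must have torsion exactly $T$, and must avoid extra contributions from additive places in bad characteristic and from cusp orbits splitting over $k$. This is why the paper uses tailored $f$ over fields such as $\F_4$ and $\F_9$. For $(\C_{12} \oplus \C_2, 3)$ you need the corrected parameterisation of Proposition \ref{prop:mcdonald}. McDonald's version forces $\zeta_4 \in k$; over such $k$ both $f^2 + 1$ and $f^2 + f + 2$ split, every curve has $6^4 \cdot 12^4 \mid c(E)$, and $6^2 \cdot 12^4$ could never be attained.
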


Most of this paper will be dedicated to proving the divisibilities $ c \mid c(E) $ in Theorem \ref{thm:tamagawa} via a case-by-case analysis, which spans Sections \ref{sec:odd} to \ref{sec:exceptional}. Each case is followed by an explicit example attaining its corresponding value of $ c > 0 $, which proves sharpness. All computations with Weierstrass models were aided by Magma \cite{BCP97} and SageMath \cite{SageMath}. The completeness of the families $ (E(k(t))_{\tors}, p) $ will be clarified in Section \ref{sec:background}.

\pagebreak

Section \ref{sec:proofs} proves the following independent general result.

\begin{theorem}
\label{thm:general}
Let $ 5 \le N \le 101 $ be a prime, let $ k $ be a finite field of characteristic $ p \ne N $, let $ K $ be the function field of a smooth projective geometrically irreducible curve over $ k $, and let $ E / K $ be a non-isotrivial elliptic curve with a rational point $ P $ of prime order $ N $. Then
$$ N^2 \ \text{divides} \ c(E). $$
\end{theorem}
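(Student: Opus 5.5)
We prove Theorem \ref{thm:general} by studying where the $ N $-torsion point $ P $ can reduce, and converting this into multiplicative reduction with component groups of order divisible by $ N $.

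\emph{Local behaviour of $ P $.} At a place $ v $ of additive reduction, the component group $ \EE_v(k_v)/\EE^0_v(k_v) $ has order at most $ 4 $, and $ \EE^0_v(k_v) $ is an extension of $ k_v^+ $ by a pro-$ p $ group. Since $ N \ge 5 $ and $ N \ne p $, the point $ P $ has trivial image in both quotients, and the kernel of reduction is pro-$ p $. Hence $ P $ would be trivial, which is impossible. So $ E $ has no additive reduction anywhere, and $ E $ is semistable. At a place of split or non-split multiplicative reduction, consider the reduction of $ P $. It lands either in the non-identity components, forcing $ N \mid c_v(E) $, or in $ \EE^0_v(k_v) \cong k_v^\times $, respectively the norm-one torus.

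\emph{Global input.} The pair $ (E,P) $ gives a non-constant map $ C \to X_1(N) $, defined over $ k $, because $ E $ is non-isotrivial. Places of multiplicative reduction are exactly the preimages of cusps. The cusps of $ X_1(N) $ come in two families of $ (N-1)/2 $ each, up to $ \pm $. At cusps of one family, $ P $ reduces into the identity component $ \mu_N $; there the Tate parameter has valuation $ e $, where $ e $ is the ramification index of $ v $ over the cusp. At cusps of the other family, $ P $ lands in a non-identity component, and the reduction type is $ \I_{Ne} $, so $ c_v $ is divisible by $ N $ provided the components are rational. In the Tate model $ E \cong \mathbb{G}_m/q^\Z $, a point of order $ N $ not in $ \mu_N $ corresponds to $ \zeta^a q^{b/N} $ with $ b \ne 0 $. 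This forces $ N \mid v(q) $, and since $ P $ is $ K_v $-rational the curve is split multiplicative on the relevant components, so $ N \mid c_v(E) $.

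\emph{Counting such places.} It remains to show that the total contribution is at least $ N^2 $. That means showing there are at least two places of the second type, or one at which $ N^2 \mid c_v $. A non-constant map to $ X_1(N) $ hits the ``second family'' cusps. These form a $ \mathrm{Gal} $-orbit structure, and there are $ (N-1)/2 \ge 2 $ such cusps. Even if they are not all hit, I would use a degree argument. The divisor of the modular unit, or equivalently of the discriminant $ \Delta $, pulled back to $ C $ has degree $ 12h $. A Tate normal form $ y^2+(1-c)xy-by=x^3-bx^2 $ expresses $ \Delta $ as a product of powers of factors attached to each cusp. For the second family the exponents are multiples of $ N $ on every factor.

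The main obstacle, and the place where the restriction $ N \le 101 $ enters, is excluding the scenario of a single second-family place $ v $ with $ v(\Delta) = N $ exactly. I would argue via the product formula: the function $ b $ (or the appropriate modular unit $ u $ on $ X_1(N) $ whose divisor is supported on cusps) has degree zero divisor on $ C $. The ratios of orders at the two families of cusps are fixed by the known divisors of the Siegel-type units generating the cuspidal group of $ X_1(N) $. These force the second-family contribution to be $ \ge N^2 $ in the $ N $-adic valuation of $ \prod c_v $. Concretely, I would take the cuspidal divisor class group of $ X_1(N) $ and the fact, checkable for primes up to $ 101 $ by a finite computation, that no divisor of the form ``one second-family cusp with multiplicity one'' is principal modulo the first-family cusps. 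This finite, computer-assisted check is what I expect to be hardest and is why the bound $ N \le 101 $ appears.
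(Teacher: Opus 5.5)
\textbf{There is a genuine gap, at the step you yourself flag as the main obstacle.} Your local analysis is fine: there is no additive reduction because $N\ge 5$ and $N\ne p$, and at a multiplicative place either $N\mid c_v(E)$ or $P$ reduces into the torus. The problem is that you never establish two distinct places of the second type.

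\begin{itemize}
\item The cuspidal class group criterion you propose is not a well-defined statement. It is also unclear how it would control places of $C$. Pulling relations among cuspidal divisors on $X_1(N)$ back along $\phi\colon C\to X_1(N)_k$ only gives relations that already hold automatically, with multiplicities scaled by ramification indices. So such a check cannot rule out the configuration you are worried about.
\item The bound $N\le 101$ in the paper has nothing to do with a computation of this kind. It comes from inspecting Sutherland's raw equations $F_N$, which gives the valuation control in Lemma \ref{lem:modular}.
\item Your description of the second-family cusps as forming ``a Galois orbit'' is the wrong picture, and it is exactly where the argument would fail. If those cusps formed a single closed point, one place of $C$ could lie above all of them, and you would get only one factor of $N$.
\end{itemize}

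\textbf{The missing idea is short.}
\begin{itemize}
\item A non-constant morphism of smooth projective curves is surjective. So every closed point of $X_1(N)_k$ has a place of $C$ above it, and distinct closed points have disjoint fibres.
\item The cusps where $P=\zeta^a q^{b/N}$ with $b\not\equiv 0 \pmod N$ are $\Q$-rational: replacing $q^{1/N}$ by $\zeta^c q^{1/N}$ removes $\zeta^a$.
\item These cusps remain distinct modulo $p\ne N$, because the cuspidal locus is \'etale over $\Z[1/N]$.
\item Hence there are $(N-1)/2\ge 2$ distinct $k$-rational such cusps. Each has its own place $v$ of $C$ above it, and at each $P$ lies in a non-identity component, so $N\mid c_v(E)$.
\end{itemize}
Given the standard local description of the universal curve at the cusps, which you already use, this yields $N^2$. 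In fact it yields $N^{(N-1)/2}$, for every prime $N\ge 5$ with $p\ne N$.

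The paper does exactly this, concretely and for two of these cusps. It works with the model $\TT(s(r-1),rs(r-1))$ with $F_N(r,s)=0$ and shows $s\notin k$. It then takes places with $v_1(s)>0$ and $v_2(s)<0$, which are the cusps $s=0$ and $s=\infty$. Lemmas \ref{lem:modular} and \ref{lem:odd} then give split multiplicative reduction at both places, with $P=(0,0)$ reducing to the node.
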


This follows from analysing the \emph{raw form} polynomials $ F_N(X, Y) \in \Z[X, Y] $ defining the modular curve $ Y_1(N) $ computed by Sutherland \cite{Sut}. Theorem \ref{thm:general} is expected to remain valid for all $ p \ne N \ge 5 $, following Lorenzini's conjecture on $ F_N(X, Y) $ \cite[Section 4.4]{Lor25}, and it may even be possible to prove a divisibility for $ |E(K)_{\tors}|^2 $ analogous to Theorem \ref{thm:rational}.

Assuming $ \Sha(E) $ is finite, the following conjecture is in order.

\begin{conjecture}
\label{conj:integrality}
Let $ k $ be a finite field, let $ K $ be the function field of a smooth projective geometrically irreducible curve of genus $ g $ over $ k $, and let $ E / K $ be a non-isotrivial elliptic curve. Then
$$ \dfrac{q^{h + g - 1}}{\Reg(E)} \cdot L^*(E) \in \Z[\tfrac{1}{6}]. $$
\end{conjecture}

The BSD conjecture is known in many contexts, such as when the surface over $ k $ associated to $ E $ is rational or K3, which is implied by $ h \le 2 $ when $ K = k(t) $, or is dominated by a product of curves \cite[Section 3.3]{Gro11}. When $ K = k(t) $, clearly Conjecture \ref{conj:integrality} follows from Theorem \ref{thm:tamagawa} in these cases.

\subsection*{AI declaration}

OpenAI's GPT-5.6 Sol and GPT-6 Astra assisted with corrections of mistakes in the literature (Remark \ref{rem:ulmer}, Proposition \ref{prop:mcdonald}, and Remark \ref{rem:mcdonald}), autoformalisations of the divisibility arguments (Sections \ref{sec:odd} to \ref{sec:exceptional}) in Lean, and formatting of the final paper. The original methodology, case-by-case computations, and Lean formalisation of Tate's algorithm are done by the authors, and predate the use of large language models.

\subsection*{Acknowledgements}

We thank Richard Griffon for pointing out Ulmer's result \cite{Ulm14}. The first-named author was part of the Scalable Theorem Proving via Mathematical Databases project funded by the AI for Math Fund managed by Renaissance Philanthropy in partnership with XTX Markets.

\section{Background}
\label{sec:background}

Recall that an elliptic curve $ E / K $ is \emph{constant} if it arises as the base change of an elliptic curve over $ k $, and $ E / K $ is \emph{isotrivial} if its base change to a finite extension of $ K $ is constant. Equivalently, $ E / K $ is isotrivial if its $ j $-invariant lies in $ k $ \cite[Lecture 1, Remark 1.1.5]{Ulm11}. The following result over $ K = k(t) $ is crucial to the proof of Theorem \ref{thm:tamagawa}.

\pagebreak

\begin{theorem}[Cox and Parry \cite{CP80}, McDonald \cite{McD18}]
\label{thm:torsion}
Let $ k $ be a finite field of characteristic $ p $, and let $ E / k(t) $ be a non-isotrivial elliptic curve with $ E(k(t))_{\tors} \ne 0 $. Then the tuple $ (E(k(t))_{\tors}, p) $ falls into one of the following families. Furthermore, each family admits an explicit parameterisation.

\bigskip

\begin{center}
\scriptsize
\begin{tblr}{|c|c|c|c|c|}
\hline
$ E(k(t))_{\tors} $ & $ p $ & \SetCell[c=2]{c} parameterisation & & reference \\
\hline
\SetCell[r=2]{m} $ \C_2 $ & $ 2 $ & \SetCell[c=2]{c} $ y^2 + xy = x^3 + Ax^2 + Bx $ & & \cite[Prop A.1.1]{Sil09} \\
\cline{2-5}
& $ \ne 2 $ & \SetCell[c=2]{c} $ y^2 = x^3 + Ax^2 + Bx $ & & \cite[Table 1]{McD18} \\
\hline
$ \C_3 $ & any & \SetCell[c=2]{c} $ y^2 + Axy + By = x^3 $ & & \cite[Table 1]{McD18} \\
\hline
$ \C_2 \oplus \C_2 $ & $ \ne 2 $ & \SetCell[c=2]{c} $ y^2 = x(x - A)(x - B) $ & & \cite[Table 1]{McD18} \\
\hline
$ \C_3 \oplus \C_3 $ & $ \ne 3 $ & \SetCell[c=2]{c} $ y^2 + 3(f + 2)xy + 9(f^2 + f + 1)y = x^3 $ & & \cite[Page 401]{McD18} \\
\hline
$ \C_4 $ & any & $ 0 $ & $ f $ & \cite[Table 2]{McD18} \\
\hline
$ \C_5 $ & any & $ f $ & $ a $ & \cite[Table 2]{McD18} \\
\hline
$ \C_6 $ & any & $ f $ & $ a(f + 1) $ & \cite[Table 2]{McD18} \\
\hline
$ \C_7 $ & any & $ f^2 - f $ & $ af $ & \cite[Table 2]{McD18} \\
\hline
$ \C_8 $ & any & $ \tfrac{(2f - 1)(f - 1)}{f} $ & $ af $ & \cite[Table 2]{McD18} \\
\hline
$ \C_9 $ & any & $ f^2(f - 1) $ & $ a(f^2 - f + 1) $ & \cite[Table 2]{McD18} \\
\hline
$ \C_{10} $ & any & $ -\tfrac{f(f - 1)(2f - 1)}{f^2 - 3f + 1} $ & $ -\tfrac{af^2}{f^2 - 3f + 1} $ & \cite[Table 2]{McD18} \\
\hline
$ \C_{12} $ & any & $ -\tfrac{f(2f - 1)(3f^2 - 3f + 1)}{(f - 1)^3} $ & $ -\tfrac{a(2f^2 - 2f + 1)}{f - 1} $ & \cite[Table 2]{McD18} \\
\hline
$ \C_4 \oplus \C_2 $ & $ \ne 2 $ & $ 0 $ & $ f^2 - \tfrac{1}{16} $ & \cite[Table 3]{McD18} \\
\hline
$ \C_6 \oplus \C_2 $ & $ \ne 2 $ & $ -\tfrac{2f - 10}{f^2 - 9} $ & $ \tfrac{a(f - 1)^2}{f^2 - 9} $ & \cite[Table 3]{McD18} \\
\hline
$ \C_8 \oplus \C_2 $ & $ \ne 2 $ & $ \tfrac{(2f + 1)(8f^2 + 4f + 1)}{2(4f + 1)(8f^2 - 1)f} $ & $ \tfrac{2af(4f + 1)}{8f^2 - 1} $ & \cite[Table 3]{McD18} \\
\hline
$ \C_4 \oplus \C_4 $ & $ \ne 2 $ & $ 0 $ & $ f^4 - \tfrac{1}{16} $ & \cite[Table 4]{McD18} \\
\hline
$ \C_6 \oplus \C_3 $ & $ \ne 3 $ & $ -\tfrac{f(f^2 + f + 1)}{(f - 1)^3} $ & $ -\tfrac{a(4f^2 - 2f + 1)}{(f - 1)^3} $ & \cite[Table 4]{McD18} \\
\hline
$ \C_5 \oplus \C_5 $ & $ \ne 5 $ & $ \tfrac{f^4 + 2f^3 + 4f^2 + 3f + 1}{f^5 - 3f^4 + 4f^3 - 2f^2 + f} $ & $ a $ & \cite[Table 4]{McD18} \\
\hline
$ \C_{11} $ & $ 11 $ & $ \tfrac{(f + 3)(f + 5)^2(f + 9)^2}{3(f + 1)(f + 4)^4} $ & $ \tfrac{a(f + 1)^2(f + 9)}{2(f + 4)^3} $ & \cite[Table 14]{McD18} \\
\hline
\SetCell[r=2]{m} $ \C_{14} $ & $ 2 $ & $ \tfrac{f(f + 1)^3}{f^3 + f + 1} $ & $ \tfrac{a}{f^3 + f + 1} $ & \SetCell[r=2]{m} \cite[Table 14]{McD18} \\
\cline{2-4}
& $ 7 $ & $ \tfrac{(f + 1)(f + 3)^3(f + 4)(f + 6)}{f(f + 2)^2(f + 5)} $ & $ \tfrac{a(f + 1)(f + 5)^3}{4f(f + 2)} $ & \\
\hline
\SetCell[r=2]{m} $ \C_{15} $ & $ 3 $ & $ \tfrac{f^3(f + 1)^2}{(f + 2)^6} $ & $ \tfrac{af(f^4 + 2f^3 + f + 1)}{(f + 2)^5} $ & \SetCell[r=2]{m} \cite[Table 14]{McD18} \\
\cline{2-4}
& $ 5 $ & $ \tfrac{(f + 1)(f + 2)^2(f + 4)^3(f^2 + 2)}{(f + 3)^6(f^2 + 3)} $ & $ \tfrac{af(f + 4)}{(f + 3)^5} $ & \\
\hline
$ \C_{18} $ & $ 2 $ & $ \tfrac{f(f + 1)^3(f^2 + f + 1)}{f^3 + f + 1} $ & $ \tfrac{a(f + 1)^2}{f^3 + f + 1} $ & \cite[Table 14]{McD18} \\
\hline
$ \C_{10} \oplus \C_2 $ & $ 5 $ & $ \tfrac{f(f + 1)(f + 2)^2(f + 3)(f + 4)}{(f^2 + 4f + 1)^2} $ & $ \tfrac{a(f + 1)^2(f + 3)^2}{4(f^2 + 4f + 1)^2} $ & \cite[Table 14]{McD18} \\
\hline
$ \C_{10} \oplus \C_5 $ & $ 2 $ & $ \tfrac{f(f^4 + f + 1)(f^4 + f^3 + 1)}{(f^2 + f + 1)^5} $ & $ \tfrac{af^2(f^4 + f^3 + 1)^2}{(f^2 + f + 1)^5} $ & \cite[Table 14]{McD18} \\
\hline
$ \C_{12} \oplus \C_2 $ & $ 3 $ & $ \tfrac{f(f + 1)(f^2 + 1)(f^2 + 2f + 2)}{(f + 2)^3} $ & $ \tfrac{a(f^2 + f + 2)^2}{(f + 2)(f^2 + 2f + 2)} $ & Proposition \ref{prop:mcdonald} \\
\hline
\end{tblr}
\end{center}
\end{theorem}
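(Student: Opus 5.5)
This statement is essentially a compilation of known results, so the plan is to assemble it from the literature and check that nothing is lost in translation. First, I will use the classification of possible torsion subgroups of non-isotrivial elliptic curves over global function fields of characteristic $p$, due to Cox and Parry \cite{CP80}. The input is that $E(k(t))_{\tors}$ is the image of a $k(t)$-point of some modular curve $X_1(M, N)$ (or its Igusa-type variant when $p$ divides the level). Non-isotriviality forces this point to be non-constant, so the modular curve must be dominated by $\mathbb{P}^1_k$. By L\"uroth's theorem it must then have genus $0$, with a $k$-rational point or at least a rational parameter. Enumerating the genus-$0$ modular curves of level coprime to $p$, together with the wild cases where $p$ divides the level (where the genus drops, as for $X_1(11)$ in characteristic $11$ or $X_1(14)$ in characteristic $2, 7$), gives exactly the list of pairs $(E(k(t))_{\tors}, p)$ in the table. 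The condition $p \nmid$ order of the full-level part (e.g. $\C_2 \oplus \C_2$ requires $p \ne 2$) comes from the Weil pairing forcing $\mu_N \subset k$ and $p \ne N$ for full $N$-torsion.

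Second, for each family I will write down the parameterisation. For $\C_2$, $\C_3$, $\C_2 \oplus \C_2$ and $\C_3 \oplus \C_3$ one moves the torsion point to the origin, normalises the tangent line, and reads off the displayed models; I will quote \cite[Prop A.1.1]{Sil09} in characteristic $2$ and \cite[Table 1]{McD18} otherwise. For $N \ge 4$ one uses the Tate normal form $y^2 + (1 - c)xy - by = x^3 - bx^2$ with $(0, 0)$ of order $N$. The two columns are then read as $(c, b)$ expressed via a parameter $f \in k(t)$ and a twisting-type scalar $a$, exactly as tabulated in \cite[Tables 2--4, 14]{McD18}. For each row I will verify symbolically that the point $(0, 0)$ has the claimed order by checking the appropriate multiple vanishes. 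I will also verify that the extra torsion structure (a second generator for the non-cyclic groups) is rational, and that the $j$-invariant is non-constant for non-constant $f$.

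The main obstacle is the row $\C_{12} \oplus \C_2$ in characteristic $3$. There the published parameterisation in \cite{McD18} appears to be erroneous, and a corrected form is deferred to Proposition \ref{prop:mcdonald}. For this row I will derive the family directly. I will start from the $\C_{12}$ Tate normal form and impose that the $2$-division polynomial acquires a second $k(t)$-rational root. This amounts to finding a rational parameterisation of the resulting genus-$0$ curve over $\F_3$. I will then check that the displayed $(c, b)$ indeed gives full $\C_{12} \oplus \C_2$ and covers all such curves. Completeness, meaning that every curve in a family arises from the stated parameterisation up to isomorphism, follows in each case from the universal property of the fine (or coarse plus twist) moduli interpretation of the parameter $f$.
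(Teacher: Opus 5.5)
For every row other than $(\C_{12} \oplus \C_2, 3)$ your plan coincides with the paper's, which likewise just imports Cox--Parry's classification and McDonald's tables. For $(\C_{12} \oplus \C_2, 3)$ you take a genuinely different route.

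\textbf{The paper's route.} Proposition \ref{prop:mcdonald} starts from the $\C_4 \oplus \C_2$ family $\TT(0, f_2^2 - 1)$, whose level is prime to $3$. It imposes a rational $3$-torsion point via the characteristic-$3$ criterion: $j \in k(t)^3$ and the Hasse invariant a square. This gives $f_2 = f_1^3$ and the conic $f_1^2 + w^2 = 2$. An explicit point of order $12$ must then be found and moved to $(0, 0)$.

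\textbf{Your route.} You start from the $\C_{12}$ Tate normal form, where $(0, 0)$ already has order $12$. You then impose a single square condition on the residual quadratic factor of the $2$-division polynomial. This works. McDonald's $\C_{12}$ row reduces mod $3$ to
$a = f_{12}(f_{12} + 1)/(f_{12} - 1)^3$ and $b = a(f_{12}^2 - f_{12} - 1)/(f_{12} - 1)$.
The family of Proposition \ref{prop:mcdonald} is exactly this family at $f_{12} = f(f + 1)/(f^2 + 2f + 2)$. That map is a double cover branched only over the roots of $f_{12}^2 - f_{12} - 1$. So the square class of your discriminant will be $\lambda(f_{12}^2 - f_{12} - 1)$ for some $\lambda \in \F_3^\times$, which defines a conic with $\F_3$-points.

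\textbf{What each route buys.} Yours gives the Tate normal form directly, with no Hasse invariant and no translation step. It also describes the family cleanly as a conic double cover of the $\C_{12}$ family. The price is that completeness now rests on McDonald's $\C_{12}$ row being complete in characteristic $3 \mid 12$. There, points of exact order $3$ are Igusa-type structures, so your closing ``universal property'' sentence does not apply formally. The paper's route needs only the elementary level-$8$ family plus an explicit criterion.

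\textbf{Two smaller corrections.}
\begin{enumerate}
\item In the Tate normal form rows, the second column writes $b$ in terms of the first-column value $a$ of $\TT(a, b)$. For example, the $\C_5$ row is $\TT(f, f)$ and the $\C_6$ row is $\TT(f, f(f + 1))$. So $a$ is not an independent twisting scalar. If you treat it as free, your symbolic check that $(0, 0)$ has the stated order will fail.
\item Full $p$-torsion is excluded in characteristic $p$ because $|E[p](\overline{k(t)})| \le p$, not because of the Weil pairing. The Weil pairing only gives $\zeta_n \in k$ when $p \nmid n$.
\end{enumerate}
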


\pagebreak

Note that a specialisation of any given parameterisation in Theorem \ref{thm:torsion} does not necessarily have torsion subgroup isomorphic to its corresponding $ E(k(t))_{\tors} $, but every elliptic curve $ E / k(t) $ in characteristic $ p $ with torsion subgroup isomorphic to $ E(k(t))_{\tors} $ must be isomorphic to some specialisation of its corresponding parameterisation in Theorem \ref{thm:torsion}. The first five parameterisations by $ A, B \in k(t) $ or $ f \in k(t) \setminus k $ are standard and self-explanatory. The remaining parameterisations by $ f \in k(t) \setminus k $ only list the coefficients $ a, b \in k(t) $ in the \emph{Tate normal form} \cite[Section 2]{Sut12}
$$ \TT(a, b) : y^2 + (1 - a)xy - by = x^3 - bx^2, $$
which always exists when $ E $ has a point of order at least $ 4 $. All of these parameterisations are results of McDonald's explicit computations \cite[Tables 1--4 and 14]{McD18}, except for an error in his $ (\C_{12} \oplus \C_2, 3) $ family.

\begin{proposition}
\label{prop:mcdonald}
Let $ k $ be a finite field of characteristic $ p $, and let $ E / k(t) $ be a non-isotrivial elliptic curve with $ E(k(t))_{\tors} \cong \C_{12} \oplus \C_2 $. Then $ p = 3 $ and $ E \cong \TT(a, b) $, where
$$ a = \dfrac{f(f + 1)(f^2 + 1)(f^2 + 2f + 2)}{(f + 2)^3}, \qquad b = \dfrac{a(f^2 + f + 2)^2}{(f + 2)(f^2 + 2f + 2)}, $$
for some $ f \in k(t) \setminus k $.
\end{proposition}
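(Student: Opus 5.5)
The plan is to take McDonald's classification as correct in its structural assertions and to redo only the final parameterisation.

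\emph{Step 1: restrict the characteristic and the shape of $E$.} By \cite{McD18}, the group $\C_{12} \oplus \C_2$ occurs as $E(k(t))_{\tors}$ for a non-isotrivial $E$ only when $p = 3$. Since $E$ has a point of order $12 \ge 4$, it has a Tate normal form $\TT(a, b)$ with $(0, 0)$ of order $12$. Concretely, by the $\C_{12}$ row of Theorem~\ref{thm:torsion},
$$ a = -\dfrac{u(2u - 1)(3u^2 - 3u + 1)}{(u - 1)^3}, \qquad b = -\dfrac{a(2u^2 - 2u + 1)}{u - 1} $$
for some $u \in k(t) \setminus k$. In characteristic $3$ this reduces to $a = u(2u - 1)/(u - 1)^3$.

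\emph{Step 2: impose full $2$-torsion.} Full rational $2$-torsion on $\TT(a, b)$ means the $2$-division cubic has all three roots in $k(t)$. One root, the $x$-coordinate of $6(0, 0)$, is already rational. So the condition is that the discriminant of the remaining quadratic factor is a square. After removing square factors, this discriminant is a polynomial $\delta(u) \in \F_3[u]$. We need $w^2 = \delta(u)$ with $u, w \in k(t)$ and $u$ non-constant.

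This curve is the modular curve $X_1(12, 2)$ reduced mod $3$. It has genus $0$, since over $\Q$ it has genus $1$ but has bad reduction at $3$, which is why this family exists only for $p = 3$. The plan is to find a rational point on it over $\F_3$ and then parametrise it by a new variable $f$. Inspecting McDonald's formula should show where his substitution went wrong.

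\emph{Step 3: match the stated formulas.} Substitute the resulting $u = u(f)$ into $a$ and $b$ and reduce mod $3$. This should give $a = f(f+1)(f^2+1)(f^2+2f+2)/(f+2)^3$ and $b$ as stated, possibly after a Möbius change of $f$. Equivalently, and more directly, we can verify the statement backwards. First check that the given $(a, b)$ satisfies the $X_1(12)$ relation, i.e.\ that $(0,0)$ has order $12$ on $\TT(a, b)$: compute $6(0,0)$ and check it is $2$-torsion. Then check that the $2$-division polynomial factors completely over $\F_3(f)$.

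\emph{Step 4: surjectivity.} The substitution $u \mapsto f$ must be a birational parametrisation of the genus-$0$ curve $w^2 = \delta(u)$, not just some rational map onto it. Given this, every $u \in k(t) \setminus k$ for which $\delta(u)$ is a square comes from some $f \in k(t) \setminus k$, because a non-constant $k(t)$-point of the curve corresponds to a non-constant $f$.

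\emph{Main obstacle.} The hard part is Steps 2 and 4: producing the correct birational parametrisation of the conic-like curve over $\F_3$ and checking that it is an isomorphism, so that no curves are missed. I would first try to find the rational point with a computer algebra system and project from it, then compare with McDonald's table to locate his error.
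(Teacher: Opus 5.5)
Your route is correct, and it differs genuinely from the paper's.

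\textbf{The two routes.} The paper starts from the $\C_4 \oplus \C_2$ family $\TT(0, f_2^2 - 1)$ and imposes a rational $3$-torsion point. In characteristic $3$ this splits into two conditions:
\begin{itemize}
\item a cube condition from the $j$-invariant, which forces $f_2 = f_1^3$;
\item a square condition from the Hasse invariant.
\end{itemize}
Together these give the conic $f_1^2 + w^2 = 2$. The paper then still has to write down a point of order $12$ and move it to $(0,0)$ to reach Tate normal form.

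You start instead from the $\C_{12}$ family, which is already in Tate normal form with respect to a point of order $12$. You then impose only the $2$-torsion condition, which is a single square condition, and no retranslation is needed.

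\textbf{Step 2 is immediate.} Since $x(6P)$ is a rational root of $4x^3 + b_2x^2 + 2b_4x + b_6$, the other two roots are rational iff the discriminant $16\Delta$ of this cubic is a square. By the factorisation in Proposition \ref{prop:c12},
$\Delta = u^{12}(2u-1)^6(2u^2-2u+1)^3(3u^2-3u+1)^4(6u^2-6u+1)/(u-1)^{24}$.
Modulo $3$, both $3u^2 - 3u + 1$ and $6u^2 - 6u + 1$ become $1$. So the condition is simply $2u^2 - 2u + 1 \in k(t)^{\times 2}$. This is a conic with the $\F_3$-point $(u,w) = (0,1)$.

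Over $\Q$ the squarefree part is the quartic $(2u^2-2u+1)(6u^2-6u+1)$. That quartic dropping to a quadratic is the real reason for genus $1$ versus genus $0$. Your ``bad reduction'' remark is only a heuristic for this.

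\textbf{Steps 3 and 4 are concrete.} Take $u = f(f+1)/(f^2+2f+2)$. Then $2u^2 - 2u + 1 = (f^2+f+2)^2/(f^2+2f+2)^2$. Substituting into $a = u(u+1)/(u+2)^3$ and $b = -a(2u^2-2u+1)/(u-1)$ reproduces exactly the stated $a$ and $b$, with no M\"obius change needed. Since $u(f)$ has degree $2$ and lifts to the conic, the map $f \mapsto (u,w)$ is an isomorphism onto the conic. Hence every point with $u \notin k$ comes from some $f \in k(t) \setminus k$, which settles Step 4.

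\textbf{What each route buys.} Yours is shorter and avoids the inseparable $3$-torsion analysis. The paper's follows McDonald's own construction through $\C_4 \oplus \C_2$, so it locates his error directly and shows why $\zeta_4 \in k$ is unnecessary.

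\textbf{Two small fixes.}
\begin{itemize}
\item Your characteristic-$3$ reduction of $a$ has a sign slip. It should be $a = -u(2u-1)/(u-1)^3 = u(u+1)/(u+2)^3$, not $u(2u-1)/(u-1)^3$.
\item You should say explicitly that the $\C_{12}$ parameterisation covers every curve in characteristic $3$ with a point of order $12$, not only those with torsion exactly $\C_{12}$. The paper relies on the same kind of input for the $\C_4 \oplus \C_2$ family.
\end{itemize}
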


\begin{proof}
McDonald's classification argument already forces $ p = 3 $, so it suffices to force a $ 3 $-torsion point condition on an elliptic curve $ E / k(t) $ with $ \C_4 \oplus \C_2 \hookrightarrow E(k(t))_{\tors} $, which can be parameterised by $ \TT(0, f_2^2 - \tfrac{1}{16}) = \TT(0, f_2^2 - 1) $ for some $ f_2 \in k(t) \setminus k $ \cite[Pages 420 to 421]{McD18}. This in turn reduces to the $ j $-invariant condition
$$ \dfrac{(f_2^2 + 1)^6}{f_2^2(f_2 + 1)^4(f_2 + 2)^4} \in k(t)^3, $$
and the Hasse invariant condition $ 2(f_2^2 + 1) \in (k(t)^\times)^2 $ \cite[Lecture 1, Proposition 7.3]{Ulm11}. The former forces $ f_2 = f_1^3 $ for some $ f_1 \in k(t) \setminus k $, while the latter forces $ f_2^2 + u^2 = 2 $ for some $ u \in k(t)^\times $. They define a conic $ f_1^2 + w^2 = 2 $ with $ w := -u / (f_1^2 - 2) $, which admits a rational parameterisation
$$ f_1 = \dfrac{f^2 + f + 2}{f^2 + 1}, \qquad w = \dfrac{2f^2 + f + 1}{f^2 + 1}, $$
with base point $ (1, 1) $. Now $ \TT(0, f_1^6 - 1) $ contains the order $ 12 $ point
$$ (w(w + 2)(w + 1)^3, \ 2w^2(w + 2)(w + 1)^5), $$
which can be translated to $ (0, 0) $ via a suitable change of variables, giving
$$ a = \dfrac{w(2w + 1)}{(w + 1)^3}, \qquad b = \dfrac{(2w + 1)(w^2 + 1)}{(w + 1)^4}. $$
This simplifies to the required expressions.
\end{proof}

\pagebreak

In particular, an elliptic curve $ E / k(t) $ being in the $ (\C_{12} \oplus \C_2, 3) $ family does not imply that $ \zeta_4 \in k $, where $ \zeta_n \in \overline{k} $ is a fixed primitive $ n $-th root of unity whenever $ p \nmid n $. For instance, the elliptic curve $ E / \F_3(t) $ given by
$$ y^2 + xy + \dfrac{t^3(t^2 - 1)^3}{(t^2 + 1)^6}y = x^3 + \dfrac{t^3(t^2 - 1)^3}{(t^2 + 1)^6}x^2 $$
has $ E(\F_3(t))_{\tors} \cong \C_{12} \oplus \C_2 $, but $ \zeta_4 \notin \F_3 $.

\begin{remark}
\label{rem:mcdonald}
In addition to the unnecessary requirement $ \zeta_4 \in k $ in \cite[Pages 420 to 421]{McD18}, which is corrected by Proposition \ref{prop:mcdonald}, there are two others worth mentioning. Firstly, the condition in the $ \C_4 \oplus \C_4 $ row of \cite[Table 4]{McD18} should read $ p \ne 2 $ instead of $ p \ne 4 $. Secondly, \cite[Theorem 1.13]{McD18} omits the families $ (\C_6 \oplus \C_3, 2) $ with $ \zeta_3 \in k $ and $ (\C_6 \oplus \C_2, 3) $, despite them appearing later in \cite[Theorems 4.3 and 4.6]{McD18}.
\end{remark}

Computing $ c(E) $ in the rest of the paper largely boils down to running \emph{Tate's algorithm} in each case. For a complete discretely valued field $ F $ with finite residue field and normalised valuation $ v $, and an elliptic curve $ E / F $ with $ v(\Delta) > 0 $ given by a minimal Weierstrass equation
$$ y^2 + a_1xy + a_3y = x^3 + a_2x^2 + a_4x + a_6, $$
Tate gave an explicit algorithm, depending only on $ v(a_i) $ and certain polynomials, to compute various arithmetic data, including Kodaira--N\'eron reduction symbols and $ c_v(E) $. Refer to Silverman's account \cite[Section IV.9]{Sil94} of Tate's original article \cite{Tat75} for more details, but the consequence that will be used most often is summarised as follows. Note that Szydlo's thesis clarifies that the residue field need not be perfect \cite[Theorem 7.1]{Szy99}.

\begin{lemma}
\label{lem:odd}
Let $ F $ be a complete discretely valued field with normalised valuation $ v $, and let $ E / F $ be an elliptic curve given by an integral Weierstrass equation with $ v(a_1) = 0 $ and $ v(a_2), v(a_3), v(a_4), v(a_6) > 0 $. Then the equation is minimal with split multiplicative reduction of type $ \I_{v(\Delta)} $, and $ c_v(E) = v(\Delta) $.
\end{lemma}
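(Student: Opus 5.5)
The plan is to run the first steps of Tate's algorithm as in \cite[Section IV.9]{Sil94}, using \cite[Theorem 7.1]{Szy99} so that the residue field $k_v$ need not be perfect. Let $\pi$ be a uniformiser and $\kappa$ the residue field.

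First, I would show the equation is already minimal. Since $v(a_1) = 0$, we have $b_2 = a_1^2 + 4a_2$ with $v(b_2) = 0$. Hence $c_4 = b_2^2 - 24b_4$ satisfies $v(c_4) = 0$, because $b_4 = a_1a_3 + 2a_4$ has positive valuation. An integral equation with $v(c_4) = 0$ is minimal, since a change of variables with $v(u) > 0$ would lower $v(c_4)$ below zero.

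Next, I would identify the singular point and the reduction type. As $\Delta = -b_2^2b_8 - \cdots$ and $v(a_i) > 0$ for $i \ne 1$, we have $v(\Delta) > 0$. The reduced curve is
$$ y^2 + \bar{a}_1xy = x^3, $$
which has its singular point at $(0,0)$. So Tate's algorithm starts without any translation, and it is in the multiplicative step: $\pi \mid a_3, a_4, a_6$ and $\pi \nmid b_2$. This gives reduction of type $\I_n$ with $n = v(\Delta)$, since $v(c_4) = 0$ and $j = c_4^3/\Delta$ has $v(j) = -v(\Delta)$.

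It remains to decide split versus non-split. The tangent cone at $(0,0)$ is $T^2 + \bar{a}_1T - \bar{a}_2 = T^2 + \bar{a}_1T$, whose roots are $0$ and $-\bar{a}_1$. These are distinct and rational over $\kappa$ because $\bar{a}_1 \ne 0$, which also holds in characteristic $2$. So the reduction is split multiplicative. For split $\I_n$, the component group of the special fibre of the Néron model is cyclic of order $n$ with all components $\kappa$-rational, so $c_v(E) = n = v(\Delta)$. This is the standard entry of Tate's algorithm.

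The only real subtlety is the imperfect residue field case. There one must make sure the multiplicative step of Tate's algorithm and the identification of $c_v$ with the number of rational components still hold, and this is exactly what \cite{Szy99} supplies. I expect no other obstacle.
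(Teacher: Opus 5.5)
Your proposal is correct and is essentially the paper's proof: $v(b_2) = 0$ gives $v(c_4) = 0$, hence minimality and termination of Tate's algorithm at the multiplicative step, and the splitting $T^2 + a_1T - a_2 \equiv T(T + a_1)$ in the residue field gives split multiplicative reduction with $c_v(E) = v(\Delta)$. You also spell out some routine details that the paper leaves implicit, namely the minimality argument via $c_4' = u^{-4}c_4$ and the check that $v(\Delta) > 0$.
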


\begin{proof}
The valuation conditions give $ v(c_4) = 2v(b_2) = 0 $, so the equation is minimal and Tate's algorithm terminates at step $ 2 $. Since the polynomial $ T^2 + a_1T - a_2 $ splits to $ T(T + a_1) $ in the residue field, the reduction is split multiplicative of type $ \I_{v(\Delta)} $ and $ c_v(E) = v(\Delta) $ \cite[Page 366]{Sil94}.
\end{proof}

The next four sections will be dedicated to the individual cases in Theorem \ref{thm:tamagawa} via repeated applications of Lemma \ref{lem:odd}, as well as other lemmas introduced in their respective sections. They will involve repeatedly computing valuations of Weierstrass equation invariants using standard properties of valuations, as well as some additional facts recorded here, which will be used implicitly without mention or cited explicitly.

\pagebreak

Let $ f \in K \setminus k $. Since $ C $ is smooth, projective, and geometrically irreducible, any global regular function on $ C $ lies in $ k $ \cite[Lemma 33.9.3(8)]{Stacks}, so there is always a place $ v $ of $ K $ with $ v(f) < 0 $. Its valuation on rational expressions of $ f $ can be computed as follows.

\begin{lemma}
\label{lem:negative}
Let $ F $ be a field with a valuation $ v $, let $ f \in F $ be such that $ v(f) < 0 $, and let $ G, H \in F[X] $ be such that their coefficients $ G_i, H_j \in F $ satisfy $ v(G_i), v(H_j) \ge 0 $ and $ v(G_m) = v(H_n) = 0 $, where $ m := \deg G $ and $ n := \deg H $. Then
$$ v(G(f) / H(f)) = v(f)(m - n). $$
In particular, if $ v $ is trivial on $ k \subseteq F $, then $ G(f) \notin k $ for any $ G \in k[X] \setminus k $.
\end{lemma}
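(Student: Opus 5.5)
The plan is to reduce everything to the multiplicativity of $ v $ and the ultrametric inequality, handling numerator and denominator separately. Write $ G(f) = \sum_{i=0}^m G_i f^i $. For each $ i $ we have $ v(G_i f^i) = v(G_i) + i\,v(f) \ge i\,v(f) $, since $ v(G_i) \ge 0 $. Because $ v(f) < 0 $, the bound $ i\,v(f) $ strictly decreases as $ i $ grows. The leading term $ G_m f^m $ has valuation exactly $ m\,v(f) $, because $ v(G_m) = 0 $. For $ i < m $ we get $ v(G_i f^i) \ge i\,v(f) > m\,v(f) $, and this also holds when $ G_i = 0 $, where the valuation is $ +\infty $.

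The leading term is therefore the unique term of minimal valuation. The strict form of the ultrametric inequality, $ v(x + y) = \min(v(x), v(y)) $ whenever $ v(x) \ne v(y) $, then gives $ v(G(f)) = m\,v(f) $ by induction on the number of summands. The same argument applied to $ H $ gives $ v(H(f)) = n\,v(f) $.

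Since $ v(H(f)) $ is finite, we have $ H(f) \ne 0 $, so the quotient is defined. Multiplicativity then gives
$$ v(G(f)/H(f)) = m\,v(f) - n\,v(f) = v(f)(m - n). $$

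For the final claim, take $ G \in k[X] \setminus k $, so $ m := \deg G \ge 1 $. Since $ v $ is trivial on $ k $, every nonzero coefficient of $ G $ has valuation $ 0 $ and every coefficient has valuation $ \ge 0 $. The first part with $ H = 1 $ then yields $ v(G(f)) = m\,v(f) < 0 $. Every element of $ k $ has valuation $ 0 $ or $ +\infty $, so $ G(f) \notin k $.

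None of these steps is a real obstacle. The only points needing care are strictness: the inequality $ i\,v(f) > m\,v(f) $ for $ i < m $ relies on $ v(f) < 0 $, and zero coefficients should be treated as having valuation $ +\infty $.
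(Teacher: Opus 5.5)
Your proof is correct and follows essentially the same route as the paper: because $v(f) < 0$, the leading terms $G_m f^m$ and $H_n f^n$ are the unique terms of minimal valuation, so $v(G(f)) = m\,v(f)$ and $v(H(f)) = n\,v(f)$, and the final claim follows from $v(G(f)) = m\,v(f) < 0$. Your extra care about zero coefficients, about $H(f) \ne 0$, and about taking $H = 1$ for the last part only makes explicit what the paper leaves implicit.
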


\begin{proof}
For any $ 0 \le i < m $ with $ G_i \ne 0 $ and any $ 0 \le j < n $ with $ H_j \ne 0 $,
$$
\begin{aligned}
v(G_if^i) & \ge iv(f) > mv(f) = v(G_mf^m), \\
v(H_jf^j) & \ge jv(f) > nv(f) = v(H_nf^n),
\end{aligned}
$$
so $ G_mf^m $ and $ H_nf^n $ are unique terms with smallest valuations, and hence
$$ v(G(f) / H(f)) = v(G_mf^m) - v(H_nf^n) = v(f)(m - n). $$
If $ v $ is trivial on $ k $, then $ v(G(f)) = mv(f) < 0 $, so $ G(f) \notin k $.
\end{proof}

In particular, for any $ G \in k[X] \setminus k $, applying the same argument to $ 1 / G(f) $ with Lemma \ref{lem:negative} gives a place $ v $ with $ v(G(f)) > 0 $. If $ G = X - m $ for some $ m \in k $, then $ v(f - n) = v(m - n) = 0 $ for any $ n \in k \setminus \{m\} $. The following deals with evaluating general expressions at non-linear places.

\begin{lemma}
\label{lem:identity}
Let $ F $ be a field with a valuation $ v $ that is trivial on $ k \subseteq F $, let $ G, H_i, Q, R \in k[X] $ be such that $ G \notin k $ and
$$ \prod_i H_i(X) = Q(X)G(X) + R(X), $$
and let $ f \in F $ be such that $ v(G(f)) > 0 $ and $ v(R(f)) = 0 $. Then $ v(H_i(f)) = 0 $. In particular, if $ n \in k $ is such that $ G(n) \ne 0 $, then $ v(f - n) = 0 $.
\end{lemma}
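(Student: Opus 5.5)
The plan is to argue directly from the ultrametric inequality, exactly as in the proof of Lemma \ref{lem:negative}. Every element of $ k $ has valuation $ 0 $ or is zero, and $ v(f) \ge 0 $ will follow from the hypotheses. From these facts we get $ v(P(f)) \ge 0 $ for every $ P \in k[X] $.

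First we check that $ v(f) \ge 0 $. Since $ G \notin k $, write $ G = \sum_{i \le d} G_iX^i $ with $ d := \deg G \ge 1 $ and $ G_d \ne 0 $. If $ v(f) < 0 $, then Lemma \ref{lem:negative} (with $ H = 1 $) gives $ v(G(f)) = d \cdot v(f) < 0 $, contradicting $ v(G(f)) > 0 $. Hence $ v(f) \ge 0 $, so $ v(P(f)) \ge \min_i v(P_if^i) \ge 0 $ for all $ P \in k[X] $. In particular $ v(Q(f)) \ge 0 $ and $ v(H_i(f)) \ge 0 $ for every $ i $.

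Next we evaluate the identity at $ f $. This gives $ \prod_i H_i(f) = Q(f)G(f) + R(f) $. The first summand has valuation $ v(Q(f)) + v(G(f)) > 0 $, while $ v(R(f)) = 0 $. By the strict ultrametric inequality, $ v(\prod_i H_i(f)) = 0 $, so $ \sum_i v(H_i(f)) = 0 $. Each term in this sum is non-negative, so each term is zero, which gives $ v(H_i(f)) = 0 $.

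For the final claim, take the single factor $ H_1 = X - n $. Dividing $ G $ by $ X - n $ gives $ X - n = Q \cdot G + R $ only in the trivial sense, so instead we swap roles: write $ G(X) = Q'(X)(X - n) + G(n) $ with $ G(n) \in k^\times $. Suppose $ v(f - n) > 0 $. Then $ v(G(f) - G(n)) > 0 $, and since $ v(G(f)) > 0 $ this forces $ v(G(n)) > 0 $, which is impossible because $ G(n) $ is a nonzero constant. Hence $ v(f - n) \le 0 $, and combined with $ v(f - n) \ge 0 $ from the first step, $ v(f - n) = 0 $.

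Equivalently, this last case is the main statement applied with the identity $ (X - n) \cdot (-Q') = -G + G(n) $, i.e. with $ Q = -1 $ and the constant remainder $ R = G(n) $ of valuation $ 0 $. No step is a real obstacle; the only subtle point is establishing $ v(f) \ge 0 $, which is what makes every polynomial expression integral.
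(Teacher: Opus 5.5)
Your proof is correct and follows essentially the same route as the paper: you rule out $ v(f) < 0 $ via Lemma \ref{lem:negative}, then apply the strict ultrametric inequality to the evaluated identity, and since each $ v(H_i(f)) \ge 0 $ they must all vanish. For the final claim the paper applies the first statement to the identity $ G(X) - G(n) = H(X)(X - n) $, which is exactly your closing ``equivalently'' remark; your direct contradiction argument is that same application written out step by step.
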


\begin{proof}
If $ v(f) < 0 $, then Lemma \ref{lem:negative} gives $ v(G(f)) = v(f)\deg G < 0 $, which contradicts $ v(G(f)) > 0 $, so $ v(f) \ge 0 $. Then $ v(Q(f)G(f)) > 0 $, so
$$ \sum_i v(H_i(f)) = v(Q(f)G(f) + R(f)) = 0, $$
and hence the first statement follows. Now $ G(X) - G(n) = H(X)(X - n) $ for some $ H \in k[X] $, so $ v(f - n) = 0 $ follows from the first statement.
\end{proof}

In what follows, Lemma \ref{lem:identity} will be applied to certain explicit identities, which always exist because the corresponding expressions $ G(X) $ and $ H_i(X) $ are coprime, but can be verified with a computer algebra system.

\pagebreak

\section{Odd cyclic torsion}
\label{sec:odd}

\begin{proposition}
\label{prop:c3}
Let $ k $ be a finite field, and let $ E / k(t) $ be a non-isotrivial elliptic curve with $ E(k(t))_{\tors} \cong \C_3 $. Then $ 3 $ divides $ c(E) $.
\end{proposition}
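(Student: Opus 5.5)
The plan is to use the parameterisation of the $ \C_3 $ family from Theorem \ref{thm:torsion}. Up to isomorphism, $ E $ is given by $ y^2 + Axy + By = x^3 $ with $ A, B \in k(t) $, $ B \ne 0 $, and $ (0, 0) $ of order $ 3 $. For this model
$$ \Delta = B^3(A^3 - 27B), \qquad j = \dfrac{A^3(A^3 - 24B)^3}{B^3(A^3 - 27B)}. $$
I would locate a single place $ v $ at which Lemma \ref{lem:odd} applies and gives $ c_v(E) = v(\Delta) \equiv 0 \pmod 3 $. Since $ c(E) = \prod_v c_v(E) $, this is enough.

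The key quantity is $ s := A^3 / B $. It is invariant under the rescalings $ (x, y) \mapsto (u^2x, u^3y) $, which send $ (A, B) \mapsto (A/u, B/u^3) $.
\begin{itemize}
\item If $ A = 0 $, then $ j = 0 $ and $ E $ is isotrivial, which is excluded. So $ s \ne 0 $.
\item The $ j $-invariant is a rational function of $ s $ alone, namely $ j = s(s - 24)^3/(s - 27) $. Non-isotriviality therefore forces $ s \in k(t) \setminus k $.
\item By the remark after Lemma \ref{lem:negative}, applied to $ 1/s $, there is a place $ v $ with $ v(s) > 0 $.
\end{itemize}

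At this place, I rescale with some $ u \in k(t)^\times $ satisfying $ v(u) = v(A) $, so that the new coefficients satisfy $ v(A) = 0 $. Then $ v(B) = 3v(A) - v(s) = -v(s) < 0 $ would be wrong; instead $ B = A^3/s $ gives $ v(B) = -v(s) $. So I should choose the place with $ v(s) < 0 $ instead, which Lemma \ref{lem:negative} supplies directly. With that choice, $ v(B) = -v(s) > 0 $ after the rescaling.

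The model then has $ a_1 = A $ a unit, $ a_3 = B $ of positive valuation, and $ a_2 = a_4 = a_6 = 0 $, so it is integral and satisfies the hypotheses of Lemma \ref{lem:odd}. That lemma also gives minimality, so no separate minimality check is needed. For the discriminant:
\begin{itemize}
\item $ A^3 $ is a unit and $ v(27B) > 0 $ (this is automatic in characteristic $ 3 $, where $ 27B = 0 $), so $ A^3 - 27B $ is a unit.
\item Hence $ v(\Delta) = 3v(B) $.
\end{itemize}
Lemma \ref{lem:odd} then gives $ c_v(E) = v(\Delta) = -3v(s) $, which is divisible by $ 3 $, and the proposition follows.

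The main obstacle is essentially bookkeeping. One must pick the sign of $ v(s) $ correctly: we need the place where $ B $ is small relative to $ A^3 $. One must also confirm that $ A^3 - 27B $ stays a unit uniformly in all characteristics, including $ p = 3 $, where $ 27B = 0 $ makes it hold trivially.
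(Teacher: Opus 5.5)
Your proof is correct and is essentially the paper's proof. Your place with $v(A^3/B) < 0$ is exactly the paper's place with $v(B) > 3v(A)$, and the paper rescales globally by $u = A$ to get $y^2 + xy + (B/A^3)y = x^3$, after which Lemma \ref{lem:odd} gives $c_v(E) = 3(v(B) - 3v(A))$ just as in your version; in a final write-up, simply delete the false start where you first take a place with $v(s) > 0$.
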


\begin{proof}
By Theorem \ref{thm:torsion}, $ E $ is given by the Weierstrass equation
$$ y^2 + Axy + By = x^3, \qquad A, B \in k(t). $$
Then $ A \ne 0 $ and $ B / A^3 \notin k $, since
$$ j = \dfrac{A^3(A^3 - 24B)^3}{B^3(A^3 - 27B)} = \dfrac{(24B / A^3 - 1)^3}{(B / A^3)^3(27B / A^3 - 1)} \notin k, $$
so there is a place $ v $ such that $ v(B) > 3v(A) $. Now the change of variables $ (x, y) \mapsto (x / A^2, y / A^3) $ transforms $ E $ to the Weierstrass equation
$$ y^2 + xy + (B / A^3)y = x^3, $$
with $ \Delta = (B / A^3)^3(1 - 27B / A^3) $. At $ v $,
$$ v(a_1) = 0, \qquad v(a_3) = v(B) - 3v(A) > 0, \qquad v(\Delta) = 3(v(B) - 3v(A)). $$
By Lemma \ref{lem:odd}, $ c_v(E) = 3(v(B) - 3v(A)) $, which gives a factor $ 3 \mid c(E) $.
\end{proof}

\begin{example}
The elliptic curve $ E / \F_3(t) $ given by the parameterisation in Proposition \ref{prop:c3} with $ A = t $ and $ B = t^3 + 2t^2 + 1 $ has $ c(E) = 3 $.
\end{example}

\begin{proposition}
\label{prop:c5}
Let $ k $ be a finite field, and let $ E / k(t) $ be a non-isotrivial elliptic curve with $ E(k(t))_{\tors} \cong \C_5 $. Then $ 5^2 $ divides $ c(E) $.
\end{proposition}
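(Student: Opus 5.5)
The plan is to mimic the proof of Proposition \ref{prop:c3}: find two \emph{distinct} places of $ k(t) $ where Lemma \ref{lem:odd} applies, and show that each contributes a factor $ 5 $ to $ c(E) $. By Theorem \ref{thm:torsion}, $ E \cong \TT(f, f) $ for some $ f \in k(t) \setminus k $, that is,
$$ y^2 + (1 - f)xy - fy = x^3 - fx^2, $$
with discriminant $ \Delta = f^5(f^2 - 11f - 1) $. Since $ f $ is non-constant, there is a place $ v_0 $ with $ v_0(f) > 0 $. Applying Lemma \ref{lem:negative} to $ f $ gives a place $ v_\infty $ with $ v_\infty(f) < 0 $. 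These two places are clearly distinct.

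\textbf{Zero of $ f $.} At $ v_0 $ we have $ a_1 = 1 - f $ with $ v_0(a_1) = 0 $. The coefficients $ a_2 = -f $ and $ a_3 = -f $ have positive valuation, and $ a_4 = a_6 = 0 $. Also $ f^2 - 11f - 1 \equiv -1 $ is a unit in every characteristic, so $ v_0(\Delta) = 5v_0(f) $. Lemma \ref{lem:odd} then gives $ c_{v_0}(E) = 5v_0(f) $.

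\textbf{Pole of $ f $.} At $ v_\infty $ the given model is not of the right shape, so I first apply the change of variables $ (x, y) \mapsto (f^2x, f^3y) $. This yields the model with
$$ a_1 = \tfrac{1}{f} - 1, \qquad a_2 = -\tfrac{1}{f}, \qquad a_3 = -\tfrac{1}{f^2}, \qquad a_4 = a_6 = 0, $$
and discriminant $ \Delta' = \Delta / f^{12} = f^{-7}(f^2 - 11f - 1) $. At $ v_\infty $:
\begin{itemize}
\item $ v_\infty(a_1) = 0 $;
\item $ v_\infty(a_2) = -v_\infty(f) > 0 $ and $ v_\infty(a_3) = -2v_\infty(f) > 0 $;
\item by Lemma \ref{lem:negative}, $ v_\infty(f^2 - 11f - 1) = 2v_\infty(f) $, so $ v_\infty(\Delta') = -5v_\infty(f) > 0 $.
\end{itemize}
Lemma \ref{lem:odd} then gives $ c_{v_\infty}(E) = -5v_\infty(f) $.

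\textbf{Conclusion.} Both $ c_{v_0}(E) $ and $ c_{v_\infty}(E) $ are multiples of $ 5 $, and they come from distinct places. Hence $ 5^2 \mid c(E) $.

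The only real point of care is the second place, where one must find the rescaling that makes $ a_1 $ a unit and $ a_2, a_3 $ positive-valued. The scaling by $ f $ (weights $ 2 $ and $ 3 $) does exactly this. Characteristic plays no role, since the relevant polynomial is a unit at $ v_0 $ and has unit leading coefficient at $ v_\infty $. To show the bound $ 5^2 $ is sharp, I would then exhibit an example with $ c(E) = 25 $, for instance $ f = t $ over a suitable $ \F_q $.
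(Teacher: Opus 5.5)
Your proof is correct and essentially the paper's argument: the same model $ \TT(f, f) $ with $ \Delta = f^5(f^2 - 11f - 1) $, Lemma \ref{lem:odd} at a zero of $ f $ giving $ c_{v_0}(E) = 5v_0(f) $, and the same rescaling by $ f^2, f^3 $ at a pole of $ f $ giving $ c_{v_\infty}(E) = -5v_\infty(f) $. One minor attribution slip: the pole of $ f $ exists because $ f $ is non-constant, not by Lemma \ref{lem:negative}; that lemma is what yields $ v_\infty(f^2 - 11f - 1) = 2v_\infty(f) $, which you use correctly.
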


\begin{proof}
By Theorem \ref{thm:torsion}, $ E $ is given by the Weierstrass equation
$$ y^2 - (f - 1)xy - fy = x^3 - fx^2, $$
for some $ f \in k(t) \setminus k $, with
$$ \Delta = f^5f_1, \qquad f_1 := f^2 - 11f - 1. $$
There are distinct places $ v_1 $ and $ v_2 $ such that $ v_1(f) > 0 $ and $ v_2(f) < 0 $. At $ v_1 $,
$$ v_1(a_1) = 0, \qquad v_1(a_2) = v_1(a_3) = v_1(f) > 0, \qquad v_1(\Delta) = 5v_1(f). $$
By Lemma \ref{lem:odd}, $ c_{v_1}(E) = 5v_1(f) $, which gives a factor $ 5 \mid c(E) $. Now the change of variables $ (x, y) \mapsto (x / f^2, y / f^3) $ transforms $ E $ to the Weierstrass equation
$$ y^2 - \dfrac{f - 1}{f}xy - \dfrac{1}{f^2}y = x^3 - \dfrac{1}{f}x^2, $$
with $ \Delta = f_1 / f^7 $. At $ v_2 $, Lemma \ref{lem:negative} gives
$$ v_2(a_1) = 0, \qquad
\begin{aligned}
v_2(a_2) & = -v_2(f) > 0, \\
v_2(a_3) & = -2v_2(f) > 0,
\end{aligned}
\qquad v_2(\Delta) = -5v_2(f). $$
By Lemma \ref{lem:odd}, $ c_{v_2}(E) = -5v_2(f) $, which gives another factor $ 5 \mid c(E) $.
\end{proof}

\pagebreak

\begin{example}
The elliptic curve $ E / \F_2(t) $ given by the parameterisation in Proposition \ref{prop:c5} with $ f = t $ has $ c(E) = 5^2 $.
\end{example}

\begin{proposition}
\label{prop:c7}
Let $ k $ be a finite field, and let $ E / k(t) $ be a non-isotrivial elliptic curve with $ E(k(t))_{\tors} \cong \C_7 $. Then $ 7^3 $ divides $ c(E) $.
\end{proposition}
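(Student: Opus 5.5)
By Theorem \ref{thm:torsion}, $ E $ is isomorphic to the Tate normal form $ \TT(a, b) $ with $ a = f^2 - f $ and $ b = af = f^3 - f^2 $ for some $ f \in k(t) \setminus k $. In these variables the curve is
$$ y^2 + (1 - f^2 + f)xy - (f^3 - f^2)y = x^3 - (f^3 - f^2)x^2, $$
and its discriminant is the standard one for $ Y_1(7) $:
$$ \Delta = f^7(f - 1)^7 f_1, \qquad f_1 := f^3 - 8f^2 + 5f + 1. $$
I will confirm this by direct computation. The plan mirrors Proposition \ref{prop:c5}. I look for three distinct places at which Lemma \ref{lem:odd} applies and $ v(\Delta) $ is a multiple of $ 7 $. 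The natural candidates are the three cusps of $ X_1(7) $ with rational split multiplicative fibres: a place $ v_1 $ with $ v_1(f) > 0 $, a place $ v_2 $ with $ v_2(f - 1) > 0 $, and a place $ v_3 $ with $ v_3(f) < 0 $. All three exist by Lemma \ref{lem:negative} and the remarks after it. They are distinct because $ v(f - 1) = 0 $ whenever $ v(f) > 0 $, and conversely.

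At $ v_1 $, the coefficients satisfy $ v_1(a_1) = v_1(1 + f - f^2) = 0 $, while $ v_1(a_2) = v_1(a_3) = 2v_1(f) > 0 $ and $ a_4 = a_6 = 0 $. Also $ v_1(f_1) = 0 $ since $ f_1(0) = 1 $, so $ v_1(\Delta) = 7v_1(f) $ and Lemma \ref{lem:odd} gives the first factor $ 7 $.

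At $ v_2 $, we have $ a_1(1) = 1 $, so $ v_2(a_1) = 0 $. Moreover $ v_2(a_2) = v_2(a_3) = v_2(f - 1) > 0 $. Since $ f_1(1) = -1 $, we get $ v_2(f_1) = 0 $, hence $ v_2(\Delta) = 7v_2(f - 1) $ and a second factor $ 7 $.

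At $ v_3 $, the given equation is not integral, so I rescale by $ (x, y) \mapsto (x / f^2, y / f^3) $. This divides $ a_i $ by $ f^i $. The new coefficients are
$$ a_1 = (1 + f - f^2)/f^2, \qquad a_2 = -(f^3 - f^2)/f^4, \qquad a_3 = -(f^3 - f^2)/f^6, $$
and the new discriminant is $ \Delta / f^{12} $. By Lemma \ref{lem:negative},
$$ v_3(a_1) = 0, \qquad v_3(a_2) = -v_3(f) > 0, \qquad v_3(a_3) = -3v_3(f) > 0, $$
$$ v_3(\Delta) = v_3(f)(7 + 7 + 3 - 12) = 5v_3(f) < 0. $$
So this naive scaling does not work: the discriminant has negative valuation. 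This computation is the main obstacle, namely finding the correct model at the place where $ f $ has a pole.

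The fix I would try first is to pick a better scaling so that $ v_3(\Delta) = -7v_3(f) $ while $ a_1 $ stays a unit. Since $ \deg \Delta = 17 $ in $ f $, the scaling $ u = f^2 $ gives valuation $ (17 - 12)v_3(f) $. I need a scaling weight $ w $ with $ 17 - 12w/2 $ equal to $ -7 $, that is, $ u = f^{4} $, which requires $ \deg a_i \le 4i/2 \cdot $ and is consistent. However, $ a_1 $ would then have valuation $ 0 $ only if $ \deg a_1 = 2 = $ weight, and this is incompatible with $ u = f^4 $.

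So instead I expect the correct route is a change of parameter. Use the automorphism of $ X_1(7) $ permuting cusps, $ f \mapsto 1/(1 - f) $, which corresponds to replacing $ P $ by $ 2P $ or $ 3P $ and is available because these points also generate $ E(k(t))_{\tors} $. Re-expressing $ E $ in Tate normal form with respect to $ 2P $ sends the cusp $ f = \infty $ to $ f' = 0 $ or $ f' = 1 $. Then the computation at $ v_1 $ or $ v_2 $ applies verbatim and gives $ c_{v_3}(E) = 7 \cdot (\text{positive integer}) $. Equivalently, I would directly find the $ (x, y) $-translation plus scaling at $ v_3 $, and verify it by computer algebra. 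Multiplying the three contributions yields $ 7^3 \mid c(E) $.
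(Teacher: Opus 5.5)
Your computations at the zero of $f$ and at the zero of $f - 1$ are exactly the paper's. The gap is at the pole $v_3$, where an arithmetic slip leads you to discard the correct argument.

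\textbf{The slip at $v_3$.} The coefficients you wrote, $(1 + f - f^2)/f^2$, $-(f^3 - f^2)/f^4$ and $-(f^3 - f^2)/f^6$, are $a_i / f^{2i}$, not $a_i / f^i$. This is the rescaling with $u = f^2$, which is the paper's $(x, y) \mapsto (x / f^4, y / f^6)$. For that model the discriminant is $\Delta / f^{24} = (f - 1)^7(f^3 - 8f^2 + 5f + 1)/f^{17}$, not $\Delta / f^{12}$. Hence $v_3(\Delta) = -7v_3(f) > 0$. You already computed $v_3(a_1) = 0$, $v_3(a_2) = -v_3(f) > 0$ and $v_3(a_3) = -3v_3(f) > 0$, so Lemma \ref{lem:odd} gives $c_{v_3}(E) = -7v_3(f)$ at once. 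That is precisely the paper's proof.

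A sanity check would have caught this: with integral $a_i$ the discriminant is integral, so your conclusion $v_3(\Delta) = 5v_3(f) < 0$ was self-contradictory. Your weight analysis afterwards is also off. The required scaling is $u = f^2$, which is perfectly compatible with $a_1$ being a unit; $f^4$ is the factor on $x$, i.e.\ $u^2$. So the ``main obstacle'' does not exist.

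\textbf{The diamond-operator fallback.} This route is mathematically sound, but your proposal only asserts it (``I expect''), so the third factor of $7$ is not actually established. To make it rigorous you must compute the Tate normal form of $(E, 2P)$:
\begin{itemize}
\item translate $2P = (b, ab)$ to the origin;
\item shear so the tangent there becomes $y = 0$;
\item rescale by $u = f^2$.
\end{itemize}
This yields the parameter $f' = (f - 1)/f$, so your map $f \mapsto 1/(1 - f)$ corresponds to $3P$ (equivalently $4P$). Then $v_3(f' - 1) = -v_3(f) > 0$. Your computation at the zero of $f - 1$, applied to $E \cong \TT(f'^2 - f', f'^3 - f'^2)$, gives $c_{v_3}(E) = -7v_3(f)$, since $c_v$ does not depend on the chosen model.

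This alternative explains why the three cusps $0, 1, \infty$ of $X_1(7)$ behave identically. However, it costs a nontrivial change-of-variables computation, whereas the correctly executed direct rescaling takes one line.
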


\begin{proof}
By Theorem \ref{thm:torsion}, $ E $ is given by the Weierstrass equation
$$ y^2 - f_1xy - f^2(f - 1)y = x^3 - f^2(f - 1)x^2, $$
for some $ f \in k(t) \setminus k $, with
$$ \Delta = f^7(f - 1)^7f_2, \qquad f_1 := f^2 - f - 1, \qquad f_2 := f^3 - 8f^2 + 5f + 1. $$
There are distinct places $ v_0, v_1, v_2 $ such that
$$ v_0(f) > 0, \qquad v_1(f - 1) > 0, \qquad v_2(f) < 0. $$
At $ v_0 $ and $ v_1 $,
$$
\begin{aligned}
v_0(a_1) & = 0, & v_0(a_2) & = v_0(a_3) = 2v_0(f) > 0, & v_0(\Delta) & = 7v_0(f), \\
v_1(a_1) & = 0, & v_1(a_2) & = v_1(a_3) = v_1(f - 1) > 0, & v_1(\Delta) & = 7v_1(f - 1).
\end{aligned}
$$
By Lemma \ref{lem:odd}, $ c_{v_0}(E) = 7v_0(f) $ and $ c_{v_1}(E) = 7v_1(f - 1) $, which give two factors $ 7 \mid c(E) $. Now the change of variables $ (x, y) \mapsto (x / f^4, y / f^6) $ transforms $ E $ to the Weierstrass equation
$$ y^2 - \dfrac{f_1}{f^2}xy - \dfrac{f - 1}{f^4}y = x^3 - \dfrac{f - 1}{f^2}x^2, $$
with $ \Delta = (f - 1)^7f_2 / f^{17} $. At $ v_2 $, Lemma \ref{lem:negative} gives
$$ v_2(a_1) = 0, \qquad
\begin{aligned}
v_2(a_2) & = -v_2(f) > 0, \\
v_2(a_3) & = -3v_2(f) > 0,
\end{aligned}
\qquad v_2(\Delta) = -7v_2(f). $$
By Lemma \ref{lem:odd}, $ c_{v_2}(E) = -7v_2(f) $, which gives another factor $ 7 \mid c(E) $.
\end{proof}

\begin{example}
The elliptic curve $ E / \F_2(t) $ given by the parameterisation in Proposition \ref{prop:c7} with $ f = t $ has $ c(E) = 7^3 $.
\end{example}

\begin{proposition}
\label{prop:c9}
Let $ k $ be a finite field of characteristic $ p $, and let $ E / k(t) $ be a non-isotrivial elliptic curve with $ E(k(t))_{\tors} \cong \C_9 $.
\begin{enumerate}
\item If $ p \ne 3 $, then $ 3 \cdot 9^3 $ divides $ c(E) $.
\item If $ p = 3 $, then $ 9^3 $ divides $ c(E) $.
\end{enumerate}
\end{proposition}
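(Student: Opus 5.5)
The plan is to mirror the proofs of Propositions \ref{prop:c5} and \ref{prop:c7}: take the explicit Tate normal form from Theorem \ref{thm:torsion}, factor the discriminant, and apply Lemma \ref{lem:odd} at several distinct places. By Theorem \ref{thm:torsion}, $ E \cong \TT(a, b) $ with $ a = f^2(f - 1) $ and $ b = f^2(f - 1)(f^2 - f + 1) $ for some $ f \in k(t) \setminus k $. So $ E $ is
$$ y^2 + (1 - f^3 + f^2)xy - by = x^3 - bx^2. $$
I expect (to be confirmed by computer algebra) that
$$ \Delta = f^9(f - 1)^9f_1^3f_2, \qquad f_1 := f^2 - f + 1, \qquad f_2 := f^3 - 6f^2 + 3f + 1. $$

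\emph{The three factors of $ 9 $.} Choose places $ v_0, v_1, v_2 $ with $ v_0(f) > 0 $, $ v_1(f - 1) > 0 $ and $ v_2(f) < 0 $.
\begin{itemize}
\item At $ v_0 $ and $ v_1 $ we have $ a_1 \equiv 1 $, while $ a_2 = a_3 = -b $ has positive valuation, and $ v(\Delta) $ equals $ 9v_0(f) $ or $ 9v_1(f - 1) $ respectively. The factors $ f - 1 $, $ f_1 $, $ f_2 $ (respectively $ f $, $ f_1 $, $ f_2 $) are units there, by Lemma \ref{lem:identity} or direct evaluation at $ 0 $ and $ 1 $.
\item At $ v_2 $, apply the change of variables $ (x, y) \mapsto (x / f^6, y / f^9) $. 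This makes $ a_1 = (1 - f^3 + f^2) / f^3 $ a unit, $ v_2(a_2) = -v_2(f) > 0 $, $ v_2(a_3) = -4v_2(f) > 0 $, and $ \Delta $ becomes $ \Delta / f^{36} $. Since $ \deg \Delta = 45 $, Lemma \ref{lem:negative} gives valuation $ -9v_2(f) $.
\end{itemize}
Lemma \ref{lem:odd} then gives three factors $ 9 \mid c(E) $ in every characteristic, which proves part (2).

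\emph{The extra factor $ 3 $ when $ p \ne 3 $.} Take a place $ v_3 $ with $ v_3(f_1) > 0 $; it exists by the remark after Lemma \ref{lem:negative}. It is distinct from $ v_0, v_1, v_2 $ because $ f_1(0) = f_1(1) = 1 $. Reduce modulo $ f_1 $ using $ f^3 \equiv -1 $ and $ f^2 \equiv f - 1 $:
\begin{itemize}
\item $ 1 - f^3 + f^2 \equiv f + 1 $, and $ f_2 \equiv -3(f - 2) $.
\item Both are nonzero modulo $ f_1 $ exactly when $ p \ne 3 $, since $ f_1(-1) = f_1(2) = 3 $.
\end{itemize}
I will turn these congruences into explicit identities $ H(X) = Q(X)G(X) + R(X) $ with $ G = f_1 $ and constant nonzero $ R $ (after clearing constants), and apply Lemma \ref{lem:identity}. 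This gives $ v_3(a_1) = v_3(f) = v_3(f - 1) = v_3(f_2) = 0 $.

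Hence $ v_3(a_2) = v_3(a_3) = v_3(f_1) > 0 $ and $ v_3(\Delta) = 3v_3(f_1) $, so Lemma \ref{lem:odd} gives a fourth factor $ 3 \mid c(E) $, proving part (1).

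The main obstacle is bookkeeping rather than ideas. First, the discriminant factorisation must be checked exactly. Second, the Lemma \ref{lem:identity} identities at $ v_3 $ must have constant remainders whose only prime divisor is $ 3 $. When $ p = 3 $ this argument genuinely breaks: then $ f_1 = (f + 1)^2 $ and $ a_1 $ vanishes at that place, which is consistent with the weaker bound in part (2).
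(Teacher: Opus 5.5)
Your proposal is correct and follows the paper's proof essentially step for step. It uses the same Tate normal form and discriminant factorisation, the same three places $v_0, v_1, v_2$ with the rescaling by $u = f^3$ at the pole, and the same extra place where $f^2 - f + 1$ vanishes, handled by a Lemma~\ref{lem:identity} identity with remainder $\pm 9$; the paper uses $(f^3 - f^2 - 1)(f^3 - 6f^2 + 3f + 1) = (f^4 - 6f^3 + 2f^2 + 5f + 8)(f^2 - f + 1) - 9$.

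One small slip: $\deg \Delta = 9 + 9 + 6 + 3 = 27$, not $45$. It is $27$ that gives $v_2(\Delta / f^{36}) = (27 - 36)v_2(f) = -9v_2(f)$, which is the value you state.
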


\begin{proof}
By Theorem \ref{thm:torsion}, $ E $ is given by the Weierstrass equation
$$ y^2 - f_2xy - f^2(f - 1)f_1y = x^3 - f^2(f - 1)f_1x^2, $$
for some $ f \in k(t) \setminus k $, with
$$ \Delta = f^9(f - 1)^9f_1^3f_3, \qquad
\begin{aligned}
f_1 & := f^2 - f + 1, \\
f_2 & := f^3 - f^2 - 1, \\
f_3 & := f^3 - 6f^2 + 3f + 1.
\end{aligned}
$$

\pagebreak

\noindent There are distinct places $ v_0, v_1, v_2 $ such that
$$ v_0(f) > 0, \qquad v_1(f - 1) > 0, \qquad v_2(f) < 0. $$
At $ v_0 $ and $ v_1 $,
$$
\begin{aligned}
v_0(a_1) & = 0, & v_0(a_2) & = v_0(a_3) = 2v_0(f) > 0, & v_0(\Delta) & = 9v_0(f), \\
v_1(a_1) & = 0, & v_1(a_2) & = v_1(a_3) = v_1(f - 1) > 0, & v_1(\Delta) & = 9v_1(f - 1).
\end{aligned}
$$
By Lemma \ref{lem:odd}, $ c_{v_0}(E) = 9v_0(f) $ and $ c_{v_1}(E) = 9v_1(f - 1) $, which give two factors $ 9 \mid c(E) $. Now the change of variables $ (x, y) \mapsto (x / f^6, y / f^9) $ transforms $ E $ to the Weierstrass equation
$$ y^2 - \dfrac{f_2}{f^3}xy - \dfrac{(f - 1)f_1}{f^7}y = x^3 - \dfrac{(f - 1)f_1}{f^4}x^2, $$
with $ \Delta = (f - 1)^9f_1^3f_3 / f^{27} $. At $ v_2 $, Lemma \ref{lem:negative} gives
$$ v_2(a_1) = 0, \qquad
\begin{aligned}
v_2(a_2) & = -v_2(f) > 0, \\
v_2(a_3) & = -4v_2(f) > 0,
\end{aligned}
\qquad v_2(\Delta) = -9v_2(f). $$
By Lemma \ref{lem:odd}, $ c_{v_2}(E) = -9v_2(f) $, which gives another factor $ 9 \mid c(E) $.

Let $ p \ne 3 $. Since $ f_1 \notin k $ by Lemma \ref{lem:negative}, there is a place $ v_3 $, distinct from $ v_0, v_1, v_2 $, such that $ v_3(f_1) > 0 $. At $ v_3 $, Lemma \ref{lem:identity} \footnote{$ f_2f_3 = (f^4 - 6f^3 + 2f^2 + 5f + 8)f_1 - 9 $} gives
$$ v_3(f) = v_3(f - 1) = v_3(f + 1) = v_3(f_2) = v_3(f_3) = 0, $$
so
$$ v_3(a_1) = v_3(f + 1) = 0, \quad v_3(a_2) = v_3(a_3) = v_3(f_1) > 0, \quad v_3(\Delta) = 3v_3(f_1). $$
By Lemma \ref{lem:odd}, $ c_{v_3}(E) = 3v_3(f_1) $, which gives a factor $ 3 \mid c(E) $.
\end{proof}

\begin{example}
The elliptic curve $ E / \F_2(t) $ given by the parameterisation in Proposition \ref{prop:c9} with $ f = t $ has $ c(E) = 3 \cdot 9^3 $. If $ \F_9 := \F_3[\alpha] / (\alpha^2 + 2\alpha + 2) $, then the elliptic curve $ E / \F_9(t) $ given by the same parameterisation with $ f = (\alpha t^3 + 2t^2 + 1) / (t^3 + t^2 + 2) $ has $ c(E) = 9^3 $.
\end{example}

\begin{proposition}
\label{prop:c11}
Let $ k $ be a finite field of characteristic $ 11 $, and let $ E / k(t) $ be a non-isotrivial elliptic curve with $ E(k(t))_{\tors} \cong \C_{11} $. Then $ 11^5 $ divides $ c(E) $.
\end{proposition}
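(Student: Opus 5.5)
By Theorem \ref{thm:torsion}, $ E \cong \TT(a, b) $ with
$$ a = \dfrac{(f + 3)(f + 5)^2(f + 9)^2}{3(f + 1)(f + 4)^4}, \qquad b = \dfrac{a(f + 1)^2(f + 9)}{2(f + 4)^3}, $$
for some $ f \in k(t) \setminus k $. The plan is the same as in Propositions \ref{prop:c5}, \ref{prop:c7} and \ref{prop:c9}. We first compute the discriminant of $ \TT(a, b) $ as a rational function of $ f $ over $ \F_{11} $; it should factor over $ \F_{11} $ essentially into linear factors in $ f $. We then locate five distinct places $ v $ at which, after a suitable scaling $ (x, y) \mapsto (x / u^2, y / u^3) $, the model has $ v(a_1) = 0 $ and $ v(a_2), v(a_3), v(a_4), v(a_6) > 0 $. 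Lemma \ref{lem:odd} then gives $ c_v(E) = v(\Delta) $ at each of them, and we expect each $ v(\Delta) $ to be $ 11 $ times an integer.

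The natural choice of places comes from the cusps of $ X_1(11) $ over $ \F_{11} $. At a cusp with split multiplicative reduction of type $ \I_{11n} $, the rational point of order $ 11 $ must meet a non-identity component. Since $ \overline{\F}_{11}(t) $ is a function field in characteristic $ 11 $, some cusps collapse, and we need only five such places. The candidates are the places $ v $ with
$$ v(f + 1) > 0, \quad v(f + 4) > 0, \quad v(f + 5) > 0, \quad v(f + 9) > 0, \quad v(f) < 0, $$
with $ f + 3 $ as a possible substitute for one of them. These are pairwise distinct, since $ v(f - m) > 0 $ forces $ v(f - n) = 0 $ for $ n \ne m $, as noted after Lemma \ref{lem:negative}.

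At each candidate place we proceed as follows.
\begin{enumerate}
\item Take $ v(f - m) = e > 0 $, or $ v(f) = -e < 0 $, and write $ f - m = \pi $ to leading order.
\item Use Lemma \ref{lem:negative} and the rule $ v(f - n) = 0 $ for $ n \ne m $ to compute $ v(a) $, $ v(b) $ and $ v(1 - a) $.
\item Choose a scaling $ u $, a power of $ f - m $ or of $ f $, so that $ a_1 = (1 - a) / u $ becomes a unit while $ a_2 = -b / u^2 $ and $ a_3 = -b / u^3 $ acquire positive valuation. Transformations translating $ x $ and $ y $ may also be needed.
\item Read off $ v(\Delta) $ as a multiple of $ 11e $.
\end{enumerate}
At $ f + 4 $ and $ f + 1 $ the variable $ a $ has a pole, so $ 1 - a $ has a pole too, and we rescale by $ u = 1 - a $ (up to a unit). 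We then need
$$ v(b) > 2v(1 - a) \quad \text{and} \quad v(b) > 3v(1 - a), $$
which amounts to comparing pole orders such as $ 4 $ versus $ 3 + 4 $.

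The main obstacle is that at some of these places $ v(1 - a) $ and $ v(b) $ need not immediately produce the shape required by Lemma \ref{lem:odd}. For instance, at $ v(f + 5) > 0 $ or $ v(f + 9) > 0 $ we get $ v(a) > 0 $, hence $ v(1 - a) = 0 $ and $ v(b) > 0 $, which is fine. But at $ v(f) < 0 $ we have $ v(a) = 0 $ with residue $ 1/3 $, and at $ v(f + 3) > 0 $ cancellation in $ 1 - a $ is possible. If a place yields additive or non-split reduction, we would first try a different linear factor. Otherwise we would find the needed change of variables by computing the exact valuations of $ c_4 $ and $ \Delta $ as rational functions of $ f $ with computer algebra, and would verify units through explicit polynomial identities as in Lemma \ref{lem:identity}. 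The sharpness claim would then come from specialising $ f = t $ over $ \F_{11} $ and computing $ c(E) = 11^5 $ directly.
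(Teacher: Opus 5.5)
Your strategy matches the paper's: Tate normal form, five places, rescaling, then Lemma \ref{lem:odd}. Your treatment of $v(f+1)>0$ and $v(f+4)>0$ by rescaling with $u = 1-a$ is correct. It gives $v(a_2), v(a_3) = 3e, 4e$ and $e, 5e$ respectively, with $v(\Delta) = 11e$, exactly as in the paper, where $(f+1)(f+4)^4$ agrees with $1-a$ up to a unit at these places. The gap is the fifth place. The place $v(f)<0$ on your main list does not work. There $a \to 1/3$ and $b \to 1/6$, so the Tate normal form is already integral, with reduction $y^2 + 8xy - 2y = x^3 - 2x^2$ over $\F_{11}$. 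This reduced curve has $b_2 = 1$, $b_4 = 6$ and $c_4 = 1 - 144 \equiv 0$. Since
$$ \Delta = \dfrac{2f^2(f + 3)^{11}(f + 5)^{11}(f + 9)^{11}}{(f + 1)(f + 4)^{37}} $$
has degree $35 - 38 = -3$ in $f$, you get $v(\Delta) = -3v(f)$. So the reduction there is additive (for instance of type $\mathrm{III}$ with $c_v = 2$ when $v(f) = -1$) and contributes no factor of $11$. The same happens at $v(f) > 0$. These are supersingular points, not cusps.

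The missing fifth place is $v(f+3) > 0$, and your worry about cancellation there is unfounded. Since $a$ vanishes to order $v(f+3)$, $1-a$ is a unit, $v(b) = v(f+3)$ and $v(\Delta) = 11v(f+3)$, so Lemma \ref{lem:odd} applies with no rescaling. More fundamentally, the factorisation of $\Delta$ above is the heart of the proof, since it is what tells you which places have $v(\Delta) \in 11\Z$. Your proposal defers it (``we expect'', ``try a different linear factor''), so as written it does not establish five factors of $11$.

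Separately, your sharpness check would fail. With $f = t$ over $\F_{11}$ the place at infinity has type $\mathrm{III}$, giving $c(E) = 2 \cdot 11^5$. This is why the paper's example uses $f = \alpha t^4$ over $\F_{121}$ instead.
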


\begin{proof}
By Theorem \ref{thm:torsion}, $ E $ is given by the Weierstrass equation
$$
\begin{aligned}
& y^2 + \dfrac{8f_1}{(f + 1)(f + 4)^4}xy - \dfrac{(f + 1)(f + 3)(f + 5)^2(f + 9)^3}{6(f + 4)^7}y \\
& \qquad = x^3 - \dfrac{(f + 1)(f + 3)(f + 5)^2(f + 9)^3}{6(f + 4)^7}x^2,
\end{aligned}
$$

\pagebreak

\noindent for some $ f \in k(t) \setminus k $, with
$$ \Delta = \dfrac{2f^2(f + 3)^{11}(f + 5)^{11}(f + 9)^{11}}{(f + 1)(f + 4)^{37}}, \quad f_1 := f^5 + 10f^4 + 5f^3 + 8f^2 + 5f + 3. $$
There are distinct places $ v_1, \dots, v_5 $ such that
$$ v_1(f + 1) > 0, \ v_2(f + 3) > 0, \ v_3(f + 4) > 0, \ v_4(f + 5) > 0, \ v_5(f + 9) > 0. $$
At $ v_2, v_4, v_5 $,
$$
\begin{aligned}
v_2(a_1) & = 0, & v_2(a_2) & = v_2(a_3) = v_2(f + 3) > 0, & v_2(\Delta) & = 11v_2(f + 3), \\
v_4(a_1) & = 0, & v_4(a_2) & = v_4(a_3) = 2v_4(f + 5) > 0, & v_4(\Delta) & = 11v_4(f + 5), \\
v_5(a_1) & = 0, & v_5(a_2) & = v_5(a_3) = 3v_5(f + 9) > 0, & v_5(\Delta) & = 11v_5(f + 9).
\end{aligned}
$$
By Lemma \ref{lem:odd},
$$ c_{v_2}(E) = 11v_2(f + 3), \qquad c_{v_4}(E) = 11v_4(f + 5), \qquad c_{v_5}(E) = 11v_5(f + 9), $$
which give three factors $ 11 \mid c(E) $. Now the change of variables
$$ (x, y) \mapsto ((f + 1)^2(f + 4)^8x, (f + 1)^3(f + 4)^{12}y) $$
transforms $ E $ to the Weierstrass equation
$$
\begin{aligned}
& y^2 + 8f_1xy - \tfrac{1}{6}(f + 1)^4(f + 3)(f + 4)^5(f + 5)^2(f + 9)^3y \\
& \qquad = x^3 - \tfrac{1}{6}(f + 1)^3(f + 3)(f + 4)(f + 5)^2(f + 9)^3x^2,
\end{aligned}
$$
with
$$ \Delta = 2f^2(f + 1)^{11}(f + 3)^{11}(f + 4)^{11}(f + 5)^{11}(f + 9)^{11}. $$
At $ v_1 $ and $ v_3 $,
$$
\begin{aligned}
v_1(a_1) & = 0, &
\begin{aligned}
v_1(a_2) & = 3v_1(f + 1) > 0, \\
v_1(a_3) & = 4v_1(f + 1) > 0,
\end{aligned}
& & v_1(\Delta) & = 11v_1(f + 1), \\
v_3(a_1) & = 0, &
\begin{aligned}
v_3(a_2) & = v_3(f + 4) > 0, \\
v_3(a_3) & = 5v_3(f + 4) > 0,
\end{aligned}
& & v_3(\Delta) & = 11v_3(f + 4).
\end{aligned}
$$
By Lemma \ref{lem:odd}, $ c_{v_1}(E) = 11v_1(f + 1) $ and $ c_{v_3}(E) = 11v_3(f + 4) $, which give two more factors $ 11 \mid c(E) $.
\end{proof}

\begin{example}
If $ \F_{121} := \F_{11}[\alpha] / (\alpha^2 + 7\alpha + 2) $, then the elliptic curve $ E / \F_{121}(t) $ given by the parameterisation in Proposition \ref{prop:c11} with $ f = \alpha t^4 $ has $ c(E) = 11^5 $.
\end{example}

\begin{proposition}
\label{prop:c15}
Let $ k $ be a finite field of characteristic $ p $, and let $ E / k(t) $ be a non-isotrivial elliptic curve with $ E(k(t))_{\tors} \cong \C_{15} $.
\begin{enumerate}
\item If $ p = 3 $, then $ 3 \cdot 15^4 $ divides $ c(E) $.
\item If $ p = 5 $, then $ 5^2 \cdot 15^4 $ divides $ c(E) $.
\end{enumerate}
\end{proposition}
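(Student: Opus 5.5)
The plan follows the pattern of Propositions \ref{prop:c9} and \ref{prop:c11}. By Theorem \ref{thm:torsion}, $ E \cong \TT(a, b) $, with $ a, b $ given by the $ (\C_{15}, 3) $ or $ (\C_{15}, 5) $ parameterisation for some $ f \in k(t) \setminus k $. In each characteristic I will first write out the Weierstrass coefficients $ a_1 = 1 - a $, $ a_2 = a_3 = -b $, and factor $ \Delta $ in $ k(f) $ (with the computer algebra system). In a $ \C_{15} $ family the discriminant should factor as a product of linear and low-degree factors of $ f $, appearing to exponents $ 15 $, $ 5 $ and $ 3 $, together with powers of the denominators.

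In each case I expect four \emph{fifteen-fold} places. These are the places $ v $ with $ v(f - m) > 0 $ for the linear factors $ f - m $ whose exponent in $ \Delta $ is divisible by $ 15 $ after a suitable rescaling, together with the place $ v(f) < 0 $. For instance, when $ p = 3 $ the candidates are $ f $, $ f + 1 $, $ f + 2 $ and $ \infty $; when $ p = 5 $ they are $ f + 1 $, $ f + 2 $, $ f + 4 $ (or $ f + 3 $) and $ \infty $. Distinct linear factors give distinct places, by the remark after Lemma \ref{lem:negative}.

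At each such place I will proceed as follows:
\begin{enumerate}
\item if a denominator vanishes there, apply the change of variables $ (x, y) \mapsto (u^2 x, u^3 y) $, with $ u $ a suitable product of powers of the relevant factors (or of $ f $, at infinity), exactly as in Proposition \ref{prop:c11};
\item check that $ v(a_1) = 0 $ and $ v(a_2), v(a_3) > 0 $, using Lemma \ref{lem:negative} at infinity;
\item apply Lemma \ref{lem:odd}, which gives $ c_v(E) = v(\Delta) = 15 \cdot v(f - m) $, or $ -15 v(f) $ at infinity.
\end{enumerate}
This yields the factor $ 15^4 $.

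For the extra factor, I look for a place where $ \Delta $ has exponent $ 3 $ (when $ p = 3 $) or exponent $ 5 $ (when $ p = 5 $) on a factor of $ f $. For $ p = 3 $ this is presumably the quartic $ f^4 + 2f^3 + f + 1 $ appearing in $ b $, or a cube factor of $ \Delta $. For $ p = 5 $ I expect two such places, for example from $ f $ and from $ f^2 + 2 $ or $ f^2 + 3 $, each giving $ c_v = 5 v(\cdot) $. The proof then runs as follows:
\begin{enumerate}
\item use Lemma \ref{lem:negative} to see that this factor $ G(f) $ is not in $ k $, hence has a zero $ v $;
\item find explicit polynomial identities $ \prod_i H_i = Q G + R $ with $ R \in k^\times $, and apply Lemma \ref{lem:identity} to show that every other factor appearing in $ a $, $ b $ and $ \Delta $ is a unit at $ v $, so that $ v $ is distinct from the previous places;
\item check $ v(a_1) = 0 $ and $ v(a_2) = v(a_3) > 0 $, possibly after rescaling, and apply Lemma \ref{lem:odd} to get $ c_v(E) = 3 v(G(f)) $ or $ 5 v(G(f)) $.
\end{enumerate}

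The main obstacle is the bookkeeping. I need the exact factorisation of $ \Delta $ in each characteristic, and the right scaling $ u $ at each bad place so that the reduction is visibly split multiplicative with $ v(a_1) = 0 $. The non-linear factor places may also make $ a_1 $ vanish, in which case Lemma \ref{lem:odd} fails. If that happens I would instead run Tate's algorithm to check multiplicative reduction of type $ \I_n $ with $ 3 \mid n $ (resp.\ $ 5 \mid n $), and verify splitness by showing that the tangent polynomial splits, using the known rational torsion point. I will try the direct route with Lemma \ref{lem:odd} first.
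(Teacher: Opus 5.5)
Your plan is essentially the paper's proof. The paper also puts $E$ in Tate normal form via Theorem \ref{thm:torsion}, applies Lemma \ref{lem:odd} at the zeros of the linear factors (rescaling by powers of the denominator factors where needed), and applies Lemma \ref{lem:identity} with explicit identities at the zeros of the non-linear factors. For $p = 3$ your guessed places match the paper's exactly: $f$, $f + 1$, $f + 2$ and $v(f) < 0$ for the factor $15^4$, and the quartic $f^4 + 2f^3 + f + 1$ for the extra $3$.

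For $p = 5$ your guessed places are wrong, although the computation you propose would correct them. There
$$ \Delta = \dfrac{2f^4(f + 1)^{15}(f + 2)^{15}(f + 4)^{15}(f^2 + 2)^5}{(f + 3)^{57}(f^2 + 3)^7}, \qquad a_1 = \dfrac{f(f^2 + f + 2)(f^4 + 4f^3 + 3f + 3)}{(f + 3)^6(f^2 + 3)}. $$
\begin{enumerate}
\item The four fifteen-fold places are the zeros of $f + 1$, $f + 2$, $f + 4$ and $f + 3$. The last needs the rescaling by $u = (f + 3)^6(f^2 + 3)$, which turns $(f + 3)^{-57}$ into $(f + 3)^{15}$.
\item The two factors $5$ come from the zeros of $f^2 + 2$ and of $f^2 + 3$, the latter after the same rescaling.
\item Neither place with $v(f) \ne 0$ contributes, because $a_1$ vanishes at both. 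At a pole of $f$ the model has $v(a_1) = e$, $v(a_2) = v(a_3) = 3e$ and $v(\Delta) = 12e$, so it rescales to good reduction. At a zero of $f$ one gets $v(\Delta) = 4v(f)$ with additive or good reduction, so $c_v \le 4$ and no factor $5$ arises.
\end{enumerate}
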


\pagebreak

\begin{proof}
Let $ p = 3 $. By Theorem \ref{thm:torsion}, $ E $ is given by the Weierstrass equation
$$ y^2 + \dfrac{f_2}{(f + 2)^6}xy - \dfrac{f^4(f + 1)^2f_1}{(f + 2)^{11}}y = x^3 - \dfrac{f^4(f + 1)^2f_1}{(f + 2)^{11}}x^2, $$
for some $ f \in k(t) \setminus k $, with
$$ \Delta = \dfrac{2f^{15}(f + 1)^{15}f_1^3}{(f + 2)^{57}}, \quad f_1 := f^4 + 2f^3 + f + 1, \quad f_2 := f^6 + 2f^5 + f^4 + 1. $$
There are distinct places $ v_0, \dots, v_3 $ such that
$$ v_0(f) > 0, \qquad v_1(f + 1) > 0, \qquad v_2(f + 2) > 0, \qquad v_3(f) < 0. $$
At $ v_0 $ and $ v_1 $,
$$
\begin{aligned}
v_0(a_1) & = 0, & v_0(a_2) & = v_0(a_3) = 4v_0(f) > 0, & v_0(\Delta) & = 15v_0(f), \\
v_1(a_1) & = 0, & v_1(a_2) & = v_1(a_3) = 2v_1(f + 1) > 0, & v_1(\Delta) & = 15v_1(f + 1).
\end{aligned}
$$
At $ v_3 $, Lemma \ref{lem:negative} gives
$$ v_3(a_1) = 0, \qquad v_3(a_2) = v_3(a_3) = -v_3(f) > 0, \qquad v_3(\Delta) = -15v_3(f). $$
Since $ f_1 \notin k $ by Lemma \ref{lem:negative}, there is a place $ v_4 $, distinct from $ v_0, \dots, v_3 $, such that $ v_4(f_1) > 0 $. At $ v_4 $, Lemma \ref{lem:identity} \footnote{$ f_2 = (f^2 + 1)f_1 - f(f + 1) $} gives
$$ v_4(f) = v_4(f + 1) = v_4(f + 2) = v_4(f_2) = 0, $$
so
$$ v_4(a_1) = 0, \qquad v_4(a_2) = v_4(a_3) = v_4(f_1) > 0, \qquad v_4(\Delta) = 3v_4(f_1). $$
By Lemma \ref{lem:odd},
$$
\begin{aligned}
c_{v_0}(E) & = 15v_0(f), & c_{v_1}(E) & = 15v_1(f + 1), \\
c_{v_3}(E) & = -15v_3(f), & c_{v_4}(E) & = 3v_4(f_1),
\end{aligned}
$$
which give three factors $ 15 \mid c(E) $ and a factor $ 3 \mid c(E) $. Now the change of variables $ (x, y) \mapsto ((f + 2)^{12}x, (f + 2)^{18}y) $ transforms $ E $ to the Weierstrass equation
$$ y^2 + f_2xy - f^4(f + 1)^2(f + 2)^7f_1y = x^3 - f^4(f + 1)^2(f + 2)f_1x^2, $$
with
$$ \Delta = 2f^{15}(f + 1)^{15}(f + 2)^{15}f_1^3. $$
At $ v_2 $,
$$ v_2(a_1) = 0, \qquad
\begin{aligned}
v_2(a_2) & = v_2(f + 2) > 0, \\
v_2(a_3) & = 7v_2(f + 2) > 0,
\end{aligned}
\qquad v_2(\Delta) = 15v_2(f + 2). $$
By Lemma \ref{lem:odd}, $ c_{v_2}(E) = 15v_2(f + 2) $, which gives another factor $ 15 \mid c(E) $.

\pagebreak

\noindent Now let $ p = 5 $. By Theorem \ref{thm:torsion}, $ E $ is given by the Weierstrass equation
$$
\begin{aligned}
& y^2 + \dfrac{ff_3f_4}{(f + 3)^6(f^2 + 3)}xy - \dfrac{f(f + 1)(f + 2)^2(f + 4)^4(f^2 + 2)}{(f + 3)^{11}(f^2 + 3)}y \\
& \qquad = x^3 - \dfrac{f(f + 1)(f + 2)^2(f + 4)^4(f^2 + 2)}{(f + 3)^{11}(f^2 + 3)}x^2,
\end{aligned}
$$
for some $ f \in k(t) \setminus k $, with
$$ \Delta = \dfrac{2f^4(f + 1)^{15}(f + 2)^{15}(f + 4)^{15}(f^2 + 2)^5}{(f + 3)^{57}(f^2 + 3)^7}, $$
where $ f_3 := f^2 + f + 2 $ and $ f_4 := f^4 + 4f^3 + 3f + 3 $. There are distinct places $ v_1, \dots, v_4 $ such that
$$ v_1(f + 1) > 0, \qquad v_2(f + 2) > 0, \qquad v_3(f + 3) > 0, \qquad v_4(f + 4) > 0. $$
At $ v_1, v_2, v_4 $,
$$
\begin{aligned}
v_1(a_1) & = 0, & v_1(a_2) & = v_1(a_3) = v_1(f + 1) > 0, & v_1(\Delta) & = 15v_1(f + 1), \\
v_2(a_1) & = 0, & v_2(a_2) & = v_2(a_3) = 2v_2(f + 2) > 0, & v_2(\Delta) & = 15v_2(f + 2), \\
v_4(a_1) & = 0, & v_4(a_2) & = v_4(a_3) = 4v_4(f + 4) > 0, & v_4(\Delta) & = 15v_4(f + 4).
\end{aligned}
$$
Since $ f^2 + 2 \notin k $ by Lemma \ref{lem:negative}, there is a place $ v_5 $, distinct from $ v_1, \dots, v_4 $, such that $ v_5(f^2 + 2) > 0 $. At $ v_5 $, Lemma \ref{lem:identity} \footnote{$ (f^2 + 3)f_3f_4 = (f^6 + 2f^4 + f^3 - 1)(f^2 + 2) + 2f $} gives
$$
\begin{aligned}
& v_5(f) = v_5(f + 1) = v_5(f + 2) = v_5(f + 3) = v_5(f + 4) = 0, \\
& v_5(f^2 + 3) = v_5(f_3) = v_5(f_4) = 0,
\end{aligned}
$$
so
$$ v_5(a_1) = 0, \qquad v_5(a_2) = v_5(a_3) = v_5(f^2 + 2) > 0, \qquad v_5(\Delta) = 5v_5(f^2 + 2). $$
By Lemma \ref{lem:odd},
$$
\begin{aligned}
c_{v_1}(E) & = 15v_1(f + 1), & c_{v_2}(E) & = 15v_2(f + 2), \\
c_{v_4}(E) & = 15v_4(f + 4), & c_{v_5}(E) & = 5v_5(f^2 + 2),
\end{aligned}
$$
which give three factors $ 15 \mid c(E) $ and a factor $ 5 \mid c(E) $. Now the change of variables
$$ (x, y) \mapsto ((f + 3)^{12}(f^2 + 3)^2x, (f + 3)^{18}(f^2 + 3)^3y) $$
transforms $ E $ to the Weierstrass equation
$$
\begin{aligned}
& y^2 + ff_3f_4xy - f(f + 1)(f + 2)^2(f + 3)^7(f + 4)^4(f^2 + 2)(f^2 + 3)^2y \\
& \qquad = x^3 - f(f + 1)(f + 2)^2(f + 3)(f + 4)^4(f^2 + 2)(f^2 + 3)x^2,
\end{aligned}
$$
with
$$ \Delta = 2f^4(f + 1)^{15}(f + 2)^{15}(f + 3)^{15}(f + 4)^{15}(f^2 + 2)^5(f^2 + 3)^5. $$

\pagebreak

\noindent At $ v_3 $,
$$ v_3(a_1) = 0, \qquad
\begin{aligned}
v_3(a_2) & = v_3(f + 3) > 0, \\
v_3(a_3) & = 7v_3(f + 3) > 0,
\end{aligned}
\qquad v_3(\Delta) = 15v_3(f + 3). $$
Since $ f^2 + 3 \notin k $ by Lemma \ref{lem:negative}, there is a place $ v_6 $, distinct from $ v_1, \dots, v_5 $, such that $ v_6(f^2 + 3) > 0 $. At $ v_6 $, Lemma \ref{lem:identity} \footnote{$ (f^2 + 2)f_3f_4 = (f^6 + f^3 - 2f^2 - 2f - 1)(f^2 + 3) - f $} gives
$$
\begin{aligned}
& v_6(f) = v_6(f + 1) = v_6(f + 2) = v_6(f + 3) = v_6(f + 4) = 0, \\
& v_6(f^2 + 2) = v_6(f_3) = v_6(f_4) = 0,
\end{aligned}
$$
so
$$ v_6(a_1) = 0, \qquad
\begin{aligned}
v_6(a_2) & = v_6(f^2 + 3) > 0, \\
v_6(a_3) & = 2v_6(f^2 + 3) > 0,
\end{aligned}
\qquad v_6(\Delta) = 5v_6(f^2 + 3). $$
By Lemma \ref{lem:odd}, $ c_{v_3}(E) = 15v_3(f + 3) $ and $ c_{v_6}(E) = 5v_6(f^2 + 3) $, which give another factor $ 15 \mid c(E) $ and another factor $ 5 \mid c(E) $.
\end{proof}

\begin{example}
The elliptic curve $ E / \F_3(t) $ given by its parameterisation in Proposition \ref{prop:c15} with $ f = t $ has $ c(E) = 3 \cdot 15^4 $. If $ \F_{125} := \F_5[\alpha] / (\alpha^3 + 3\alpha + 3) $, then the elliptic curve $ E / \F_{125}(t) $ given by its parameterisation in Proposition \ref{prop:c15} with
$$ f = \dfrac{t^3}{t^2 + (\alpha^2 + \alpha + 1)t + (\alpha^2 + 3\alpha + 3)} $$
has $ c(E) = 5^2 \cdot 15^4 $.
\end{example}

\section{Even cyclic torsion}
\label{sec:even}

\begin{lemma}
\label{lem:even}
Let $ F $ be a complete discretely valued field with finite residue field and normalised valuation $ v $, and let $ E / F $ be an elliptic curve given by an integral Weierstrass equation with $ v(c_4) = 0 $ and $ v(\Delta) > 0 $ even. Then the equation is minimal with multiplicative reduction of type $ \I_{v(\Delta)} $, and $ c_v(E) $ is even.
\end{lemma}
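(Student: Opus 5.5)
We follow the pattern of Lemma \ref{lem:odd}: first show the equation is minimal with multiplicative reduction, then read off $ c_v(E) $ from step 2 of Tate's algorithm \cite[Page 366]{Sil94}, separating the split and non-split cases.

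\emph{Minimality and reduction type.} Since the equation is integral and $ v(c_4) = 0 $, it is minimal: any change of variables lowering $ v(\Delta) $ by $ 12 $ would lower $ v(c_4) $ by $ 4 $, making it negative. Because $ v(\Delta) > 0 $ and $ v(c_4) = 0 $, the reduced curve has a node rather than a cusp, so the reduction is multiplicative. Tate's algorithm then stops at step 2 with Kodaira symbol $ \I_n $, where $ n = v(\Delta) $. If we want to match the algorithm's normalisation explicitly, we first translate the singular point of the reduction to $ (0, 0) $. This gives $ v(a_3), v(a_4), v(a_6) > 0 $, and $ v(b_2) = 0 $ follows from $ c_4 \equiv b_2^2 $ modulo the maximal ideal.

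\emph{The Tamagawa number.} Let $ n = v(\Delta) $.
\begin{itemize}
\item If the reduction is split, i.e.\ $ T^2 + a_1T - a_2 $ splits over the residue field, then $ c_v(E) = n $. This is even by hypothesis.
\item If the reduction is non-split, then $ c_v(E) $ is $ 1 $ or $ 2 $ according as $ n $ is odd or even. Since $ n $ is even, $ c_v(E) = 2 $.
\end{itemize}
In both cases $ c_v(E) $ is even.

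The only step needing care is recalling the non-split count correctly. The special fibre of the N\'eron model is an $ n $-gon of components, and Frobenius acts on it by the reflection fixing the identity component. When $ n $ is even, this reflection also fixes the opposite component, so there are exactly $ 2 $ rational components. The finiteness of the residue field is used only to cite this standard form of Tate's algorithm; the paper notes that \cite[Theorem 7.1]{Szy99} allows the residue field to be imperfect in any case.
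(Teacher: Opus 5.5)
Your proposal is correct and follows the paper's proof. The paper also deduces minimality from $v(c_4) = 0$, stops Tate's algorithm at step 2 with type $\I_{v(\Delta)}$, and reads off $c_v(E) = v(\Delta)$ in the split case and $c_v(E) = 2$ in the non-split case because $v(\Delta)$ is even. Your $n$-gon and Frobenius-reflection explanation just spells out what the paper cites from \cite[Page 366]{Sil94}.
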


\begin{proof}
Since $ v(c_4) = 0 $, the equation is minimal and Tate's algorithm terminates at step $ 2 $, so the reduction is multiplicative of type $ \I_{v(\Delta)} $. If the reduction is split, then $ c_v(E) = v(\Delta) $ is even, otherwise $ c_v(E) = 2 $ is also even since $ v(\Delta) $ is even \cite[Page 366]{Sil94}.
\end{proof}

\begin{proposition}
\label{prop:c6}
Let $ k $ be a finite field of characteristic $ p $, and let $ E / k(t) $ be a non-isotrivial elliptic curve with $ E(k(t))_{\tors} \cong \C_6 $.
\begin{enumerate}
\item If $ p \ne 2, 3 $, then $ 6^2 $ divides $ c(E) $.
\item If $ p = 2 $, then $ 2 \cdot 6 $ divides $ c(E) $.
\item If $ p = 3 $, then $ 3 \cdot 6 $ divides $ c(E) $.
\end{enumerate}
\end{proposition}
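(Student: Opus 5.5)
The plan is to run the same strategy as in Propositions \ref{prop:c5}--\ref{prop:c9}, using Lemma \ref{lem:odd} for split multiplicative places and Lemma \ref{lem:even} where only evenness of $ c_v(E) $ is visible.

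By Theorem \ref{thm:torsion}, $ E \cong \TT(f, f(f + 1)) $ for some $ f \in k(t) \setminus k $, that is,
$$ y^2 + (1 - f)xy - f(f + 1)y = x^3 - f(f + 1)x^2. $$
I expect the discriminant to be $ \Delta = f^6(f + 1)^3(9f + 1) $ (to be checked by computer algebra). Take places $ v_0, v_1, v_2 $ with $ v_0(f) > 0 $, $ v_1(f + 1) > 0 $ and $ v_2(f) < 0 $.

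\emph{The place $ v_0 $.} Here $ a_1 = 1 - f $ is a unit, $ a_2 $ and $ a_3 $ have valuation $ v_0(f) > 0 $, and $ v_0(\Delta) = 6v_0(f) $. So Lemma \ref{lem:odd} gives $ c_{v_0}(E) = 6v_0(f) $, a factor $ 6 $ in every characteristic.

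\emph{The place $ v_1 $.} By Lemma \ref{lem:identity}, $ v_1(f) = 0 $, and $ v_1(9f + 1) = v_1(-8) = 0 $ when $ p \ne 2 $. Also $ a_1 = 1 - f \equiv 2 $ is a unit when $ p \ne 2 $. Then $ a_2, a_3 $ have valuation $ v_1(f + 1) > 0 $ and $ v_1(\Delta) = 3v_1(f + 1) $, so Lemma \ref{lem:odd} gives a factor $ 3 $ whenever $ p \ne 2 $. This already settles case (3): for $ p = 3 $ we get $ 6 \cdot 3 \mid c(E) $.

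\emph{The place $ v_2 $.} Apply $ (x, y) \mapsto (x / f^2, y / f^3) $, giving
$$ a_1 = \tfrac{1 - f}{f}, \qquad a_2 = -\tfrac{f + 1}{f}, \qquad a_3 = -\tfrac{f + 1}{f^2}, \qquad \Delta = \tfrac{(f + 1)^3(9f + 1)}{f^6}. $$
Now $ a_2 $ is a unit, so Lemma \ref{lem:odd} does not apply; this is the step needing a different tool. Instead compute
$$ b_2 = a_1^2 + 4a_2 = \tfrac{-3f^2 - 6f + 1}{f^2}. $$
By Lemma \ref{lem:negative} this is a unit at $ v_2 $ when $ p \ne 3 $, hence so is $ c_4 = b_2^2 - 24b_4 $, since $ b_4 $ has positive valuation. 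When $ p \ne 3 $ the factor $ 9f + 1 $ has degree $ 1 $, so Lemma \ref{lem:negative} gives $ v_2(\Delta) = -2v_2(f) $, which is positive and even. Lemma \ref{lem:even} then shows $ c_{v_2}(E) $ is even, a factor $ 2 $ for all $ p \ne 3 $.

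\emph{Combining the places.} For $ p \ne 2, 3 $ the three factors give $ 6 \cdot 3 \cdot 2 = 6^2 \mid c(E) $. For $ p = 2 $, the places $ v_0 $ and $ v_2 $ give $ 6 \cdot 2 \mid c(E) $; here $ v_1 $ is useless since $ a_1 $ vanishes there. The main point to verify carefully is the behaviour at $ v_2 $ when $ p = 2 $. In that case $ b_2 = (f^2 + 1)/f^2 $ has leading coefficient $ 1 $, so it is still a unit, and $ c_4 = b_2^2 $ in characteristic $ 2 $. Also $ 9f + 1 = f + 1 $ still has degree $ 1 $, so $ v_2(\Delta) = -2v_2(f) $ is still positive and even, and Lemma \ref{lem:even} applies.
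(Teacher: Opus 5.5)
Your proposal is correct and matches the paper's proof: the same three places $v_0, v_1, v_2$, with Lemma \ref{lem:odd} giving $6 \mid c_{v_0}(E)$ in every characteristic and $3 \mid c_{v_1}(E)$ for $p \ne 2$. After the rescaling by $f$, Lemma \ref{lem:even} at $v_2$ gives a factor $2$ for $p \ne 3$, since $b_2$ has leading coefficient $-3$; your explicit treatment of $b_4$ and of the case $p = 2$ just spells out what the paper compresses into $v_2(c_4) = 2v_2(b_2) = 2v_2(3) = 0$.
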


\pagebreak

\begin{proof}
By Theorem \ref{thm:torsion}, $ E $ is given by the Weierstrass equation
$$ y^2 - (f - 1)xy - f(f + 1)y = x^3 - f(f + 1)x^2, $$
for some $ f \in k(t) \setminus k $, with $ \Delta = f^6(f + 1)^3(9f + 1) $. There are distinct places $ v_0, v_1, v_2 $ such that
$$ v_0(f) > 0, \qquad v_1(f + 1) > 0, \qquad v_2(f) < 0. $$
At $ v_0 $,
$$ v_0(a_1) = 0, \qquad v_0(a_2) = v_0(a_3) = v_0(f) > 0, \qquad v_0(\Delta) = 6v_0(f). $$
By Lemma \ref{lem:odd}, $ c_{v_0}(E) = 6v_0(f) $, which gives a factor $ 6 \mid c(E) $.

Let $ p \ne 2 $. At $ v_1 $,
$$ v_1(a_1) = 0, \qquad v_1(a_2) = v_1(a_3) = v_1(f + 1) > 0, \qquad v_1(\Delta) = 3v_1(f + 1). $$
By Lemma \ref{lem:odd}, $ c_{v_1}(E) = 3v_1(f + 1) $, which gives a factor $ 3 \mid c(E) $.

Now let $ p \ne 3 $. The change of variables $ (x, y) \mapsto (x / f^2, y / f^3) $ transforms $ E $ to the Weierstrass equation
$$ y^2 - \dfrac{f - 1}{f}xy - \dfrac{f + 1}{f^2}y = x^3 - \dfrac{f + 1}{f}x^2, $$
with $ \Delta = (f + 1)^3(9f + 1) / f^6 $. At $ v_2 $, Lemma \ref{lem:negative} gives
$$ v_2(c_4) = 2v_2(b_2) = 2v_2(3) = 0, \qquad v_2(\Delta) = -2v_2(f). $$
By Lemma \ref{lem:even}, $ 2 \mid c_{v_2}(E) $, which gives a factor $ 2 \mid c(E) $.
\end{proof}

\begin{example}
The elliptic curve $ E / \F_5(t) $ given by the parameterisation in Proposition \ref{prop:c6} with $ f = t $ has $ c(E) = 6^2 $. If $ \F_4 := \F_2[\alpha] / (\alpha^2 + \alpha + 1) $, then the elliptic curve $ E / \F_4(t) $ given by the same parameterisation with $ f = (t^3 + \alpha) / (t^3 + \alpha^2) $ has $ c(E) = 2 \cdot 6 $. If $ \F_9 := \F_3[\alpha] / (\alpha^2 + 2\alpha + 2) $, then the elliptic curve $ E / \F_9(t) $ given by the same parameterisation with $ f = \alpha t^4 + 1 $ has $ c(E) = 3 \cdot 6 $.
\end{example}

\begin{proposition}
\label{prop:c8}
Let $ k $ be a finite field of characteristic $ p $, and let $ E / k(t) $ be a non-isotrivial elliptic curve with $ E(k(t))_{\tors} \cong \C_8 $.
\begin{enumerate}
\item If $ p \ne 2 $, then $ 8^3 $ divides $ c(E) $.
\item If $ p = 2 $, then $ 8^2 $ divides $ c(E) $.
\end{enumerate}
\end{proposition}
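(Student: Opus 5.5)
We follow the template of Propositions \ref{prop:c6} and \ref{prop:c9}. By Theorem \ref{thm:torsion}, $ E \cong \TT(a, b) $ with $ a = (2f - 1)(f - 1) / f $ and $ b = af = (2f - 1)(f - 1) $ for some $ f \in k(t) \setminus k $. Hence $ a_1 = 1 - a = -(2f^2 - 4f + 1) / f $ and $ a_2 = a_3 = -(2f - 1)(f - 1) $. The change of variables $ (x, y) \mapsto (x / f^2, y / f^3) $ makes the equation integral in $ f $:
$$ y^2 - (2f^2 - 4f + 1)xy - f^3(2f - 1)(f - 1)y = x^3 - f^2(2f - 1)(f - 1)x^2. $$
We then compute its discriminant with a computer algebra system. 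We expect it to factor as a product of powers of $ f $, $ f - 1 $ and $ 2f - 1 $, times the nontrivial factor $ 8f^2 - 8f + 1 $ of degree $ 2 $, coming from the remaining cusps of $ X_1(8) $. We also record the top degree in $ f $, since it controls the place at infinity.

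The relevant places are $ v_0, v_1, v_2 $ with $ v_0(f) > 0 $, $ v_1(f - 1) > 0 $ and $ v_2(f) < 0 $. When $ p \ne 2 $ there is a fourth place $ v_3 $ with $ v_3(2f - 1) > 0 $, distinct from the others because $ 2 \cdot 0 - 1 $ and $ 2 \cdot 1 - 1 $ are nonzero. Note that $ a_1 \equiv 1 $ at both $ v_0 $ and $ v_1 $. So at $ v_0 $ and $ v_1 $ we have $ v(a_1) = 0 $ and $ v(a_2), v(a_3) > 0 $, and Lemma \ref{lem:odd} gives $ c_v(E) = v(\Delta) $. The exponent of $ f $, respectively $ f - 1 $, in $ \Delta $ should be a multiple of $ 8 $, which gives two factors $ 8 \mid c(E) $ in every characteristic. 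The place $ v_2 $ is treated as in Propositions \ref{prop:c5} and \ref{prop:c7}: a further rescaling by a suitable power of $ f $ gives an equation with $ v_2(a_1) = 0 $, since the leading coefficient $ -2 $ of $ a_1 $ is a unit when $ p \ne 2 $, and with $ v_2(a_2), v_2(a_3) > 0 $ by Lemma \ref{lem:negative}. Lemma \ref{lem:odd} then gives $ c_{v_2}(E) = -8v_2(f) $, or some multiple of $ 8 $, which is the third factor $ 8 $ for $ p \ne 2 $.

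At $ v_3 $ we will use Lemma \ref{lem:identity} to show that $ f $, $ f - 1 $ and $ 8f^2 - 8f + 1 $ are units; evaluating at $ f = \tfrac{1}{2} $ gives the constants $ \tfrac{1}{2} $, $ -\tfrac{1}{2} $ and $ -1 $. Then Lemma \ref{lem:odd} or Lemma \ref{lem:even} yields a power of $ 2 $. This is only needed if the naive count at $ v_0, v_1, v_2 $ falls short of $ 8^3 $, for instance if one of those places carries exponent $ 4 $ instead of $ 8 $; we expect the $ 2 $-powers at $ v_3 $ to make up the difference. Sharpness would be checked on an example such as $ f = t $ over a small field.

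The main obstacle is $ p = 2 $. There $ 2f - 1 = 1 $, so $ v_3 $ disappears, and $ a_1 = 1 $ identically in the integral model. The place at infinity changes behaviour: the leading term of $ a_1 $ vanishes, so the reduction there may be additive, and Lemmas \ref{lem:odd} and \ref{lem:even} need not apply. Our plan for $ p = 2 $ is to keep only the two factors $ 8 $ from $ v_0 $ and $ v_1 $, which come from Lemma \ref{lem:odd} and are valid for every $ p $. That gives $ 8^2 $, matching the claim, so no analysis at infinity is required. The genuinely delicate step is therefore the odd-characteristic analysis at $ v_2 $ (and possibly $ v_3 $): we must choose the correct rescaling and confirm that the valuation of $ \Delta $ there is divisible by $ 8 $. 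We would first try the rescaling $ (x, y) \mapsto (x / f^4, y / f^6) $ and then adjust the exponent as needed.
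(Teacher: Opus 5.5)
Your treatment of $v_0$ and $v_1$ is correct, and so is the case $p = 2$. Those two places give $8v_0(f)$ and $8v_1(f-1)$ by Lemma \ref{lem:odd} in every characteristic, which is exactly how the paper obtains $8^2$. The gap is the third factor of $8$ when $p \ne 2$.

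Your claim that the place at infinity yields a multiple of $8$ via Lemma \ref{lem:odd} is false. In your integral model $a_1 = -(2f^2 - 4f + 1)$ has degree $2$ and $a_2 = -f^2(2f-1)(f-1)$ has degree $4$. Also $\Delta = f^8(f-1)^8(2f-1)^4(8f^2 - 8f + 1)$ has degree $22$. After rescaling so that $a_i \mapsto a_i / f^{2i}$, you get $a_1 \equiv -2$ and $a_2 \equiv -2$; the latter is a unit, not of positive valuation. You also get $v_2(a_3) = -v_2(f)$ and $v_2(\Delta) = -2v_2(f)$. So the reduction at infinity is multiplicative of type $\I_{-2v_2(f)}$. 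Its tangent cone $T^2 - 2T + 2$ has discriminant $-4$, so it splits only when $-1$ is a square in the residue field. For $f = t$ over $\F_3$ it is non-split $\I_2$ with $c_{v_2}(E) = 2$. No rescaling makes $v_2(\Delta)$ divisible by $8$.

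Your fallback does not close the gap either. At the place where $2f - 1$ vanishes, Lemma \ref{lem:odd} gives $c_{v_3}(E) = 4v_3(2f-1)$, which is only $2^2$. So $v_0, v_1, v_3$ together give $2^8$, one factor of $2$ short of $8^3$. The missing ingredient is at infinity. After the normalisation above, $c_4$ has leading coefficient $16$ (equivalently $b_2 \equiv -4$), so $v_2(c_4) = 0$ for $p \ne 2$, and $v_2(\Delta) = -2v_2(f)$ is even. Lemma \ref{lem:even} then gives $2 \mid c_{v_2}(E)$ whether or not the reduction is split.

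This is precisely the paper's decomposition $8^3 = 8 \cdot 8 \cdot 4 \cdot 2$, and all four places $f = 0$, $f = 1$, $2f = 1$ and $f = \infty$ are needed. A minor slip: in your model $a_1 \equiv -1$ at $v_0$, not $1$, which is harmless.
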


\begin{proof}
By Theorem \ref{thm:torsion}, $ E $ is given by the Weierstrass equation
$$ y^2 - \dfrac{f_1}{f}xy - (f - 1)(2f - 1)y = x^3 - (f - 1)(2f - 1)x^2, $$
for some $ f \in k(t) \setminus k $, with
$$ \Delta = \dfrac{(f - 1)^8(2f - 1)^4f_2}{f^4}, \qquad f_1 := 2f^2 - 4f + 1, \qquad f_2 := 8f^2 - 8f + 1. $$

\pagebreak

\noindent There are distinct places $ v_0 $ and $ v_1 $ such that $ v_0(f) > 0 $ and $ v_1(f - 1) > 0 $. At $ v_1 $,
$$ v_1(a_1) = 0, \qquad v_1(a_2) = v_1(a_3) = v_1(f - 1) > 0, \qquad v_1(\Delta) = 8v_1(f - 1). $$
By Lemma \ref{lem:odd}, $ c_{v_1}(E) = 8v_1(f - 1) $, which gives a factor $ 8 \mid c(E) $. Now the change of variables
$$ (x, y) \mapsto \left(\dfrac{f^2}{(f - 1)^4}x, \dfrac{f^3}{(f - 1)^6}y\right) $$
transforms $ E $ to the Weierstrass equation
$$ y^2 - \dfrac{f_1}{(f - 1)^2}xy - \dfrac{f^3(2f - 1)}{(f - 1)^5}y = x^3 - \dfrac{f^2(2f - 1)}{(f - 1)^3}x^2, $$
with $ \Delta = f^8(2f - 1)^4f_2 / (f - 1)^{16} $. At $ v_0 $,
$$ v_0(a_1) = 0, \qquad
\begin{aligned}
v_0(a_2) & = 2v_0(f) > 0, \\
v_0(a_3) & = 3v_0(f) > 0,
\end{aligned}
\qquad v_0(\Delta) = 8v_0(f). $$
By Lemma \ref{lem:odd}, $ c_{v_0}(E) = 8v_0(f) $, which gives another factor $ 8 \mid c(E) $.

Let $ p \ne 2 $. There are places $ v_2 $ and $ v_3 $, distinct from $ v_0 $ and $ v_1 $, such that $ v_2(2f - 1) > 0 $ and $ v_3(f) < 0 $. At $ v_2 $,
$$ v_2(a_1) = 0, \qquad v_2(a_2) = v_2(a_3) = v_2(2f - 1) > 0, \qquad v_2(\Delta) = 4v_2(2f - 1). $$
By Lemma \ref{lem:odd}, $ c_{v_2}(E) = 4v_2(2f - 1) $, which gives a factor $ 4 \mid c(E) $. At $ v_3 $, Lemma \ref{lem:negative} gives
$$ v_3(c_4) = 2v_3(b_2) = 2v_3(4) = 0, \qquad v_3(\Delta) = -2v_3(f). $$
By Lemma \ref{lem:even}, $ 2 \mid c_{v_3}(E) $, which gives a factor $ 2 \mid c(E) $.
\end{proof}

\begin{example}
The elliptic curve $ E / \F_3(t) $ given by the parameterisation in Proposition \ref{prop:c8} with $ f = t $ has $ c(E) = 8^3 $. If $ \F_4 := \F_2[\alpha] / (\alpha^2 + \alpha + 1) $, then the elliptic curve $ E / \F_4(t) $ given by the same parameterisation with $ f = t^8 + t^5 + t^3 + \alpha $ has $ c(E) = 8^2 $.
\end{example}

\begin{proposition}
\label{prop:c10}
Let $ k $ be a finite field of characteristic $ p $, and let $ E / k(t) $ be a non-isotrivial elliptic curve with $ E(k(t))_{\tors} \cong \C_{10} $.
\begin{enumerate}
\item If $ p \ne 2, 5 $, then $ 5 \cdot 10^3 $ divides $ c(E) $.
\item If $ p = 2 $, then $ 2 \cdot 10^2 $ divides $ c(E) $.
\item If $ p = 5 $, then $ 5^2 \cdot 10^2 $ divides $ c(E) $.
\end{enumerate}
\end{proposition}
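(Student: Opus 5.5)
The plan is to follow the pattern of Propositions \ref{prop:c6} and \ref{prop:c8}. I would write $E$ in the Tate normal form $\TT(a, b)$ given by the $\C_{10}$ row of Theorem \ref{thm:torsion}, clear the denominator $f^2 - 3f + 1$ by a suitable change of variables, and factor the discriminant. The shape I expect is
$$ \Delta = \dfrac{f^{10}(f - 1)^{10}(2f - 1)^5f_1}{(f^2 - 3f + 1)^{10}}, \qquad f_1 := 4f^2 - 2f - 1, $$
up to a unit, which I will check with a computer algebra system. This matches the cusp structure of $X_1(10)$: four cusps of width $10$, one of width $5$, and the non-cuspidal factor $f_1$.

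In each case I locate places of $K = k(t)$ where $a_1$ is a unit and $a_2, a_3$ vanish, and then apply Lemma \ref{lem:odd}.

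\emph{Generic case $p \ne 2, 5$.} I will find, or construct via Lemma \ref{lem:negative}, the following distinct places.
\begin{itemize}
\item Places $v_0, v_1$ with $v_0(f) > 0$ and $v_1(f - 1) > 0$. In the original model these should give split $\I_{10v_0(f)}$ and $\I_{10v_1(f - 1)}$ directly.
\item A place $v_3$ with $v_3(f^2 - 3f + 1) > 0$. After rescaling $(x, y)$ by powers of $f^2 - 3f + 1$, it should give $\I_{10v_3(f^2 - 3f + 1)}$.
\item A place $v_2$ with $v_2(2f - 1) > 0$, which should give $\I_{5v_2(2f - 1)}$.
\end{itemize}
Together these yield $5 \cdot 10^3$. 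The distinctness and unit conditions at $v_2$ and $v_3$ (that $f$, $f - 1$, $f_1$ and $a_1$ are units there) come from Lemma \ref{lem:identity}. The required identities are the resultant-type relations expressing $f(f - 1)f_1 \cdot a_1$ modulo $f^2 - 3f + 1$, respectively modulo $2f - 1$, as a nonzero constant. That constant is a product of powers of $2$ and $5$, which is exactly why these primes are excluded. The place with $v(f) < 0$ should be $\I_5$ or give nothing new. I would check it via Lemma \ref{lem:negative} but do not expect to need it here.

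\emph{Characteristic $2$.} Here $2f - 1 = 1$ and $f_1 = 1$ become constants, and $f^2 - 3f + 1 = f^2 + f + 1$. The places $v_0$ and $v_1$ still give $10^2$. The extra factor $2$ should come either from the place with $v(f^2 + f + 1) > 0$ or from $v(f) < 0$. At that place I would first compute $\Delta$ and $c_4$ after normalising and look for an even $\I_n$; if $a_1$ fails to be a unit there, I would run Tate's algorithm further.

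\emph{Characteristic $5$.} Here $f^2 - 3f + 1 = (f + 1)^2$, $2f - 1 = 2(f + 2)$ and $f_1 = 4(f + 2)^2$ up to constants, so the cusps collide. I will recompute $\Delta$ in terms of $f$, $f - 1$, $f + 1$ and $f + 2$. I expect two $\I_{10}$ places from $f$ and $f - 1$, and places at $f + 1$, $f + 2$ and possibly $\infty$ whose Kodaira types contribute two extra factors of $5$, for a total of $5^2 \cdot 10^2$.

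The main obstacle is the characteristic $2$ and $5$ cases, where collapsing cusps can make the relevant places additive or non-split. The first thing I would try there is a translation $x \mapsto x + r$, $y \mapsto y + sx + t$ that makes $a_1$ a unit and $a_2, a_3, a_4, a_6$ vanish, so that Lemma \ref{lem:odd} applies. Failing that, I would use Lemma \ref{lem:even}, or a direct pass through Tate's algorithm, to read off the component group.
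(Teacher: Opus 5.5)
There is a genuine gap in your generic case. The places with $v(g) > 0$, where $g := f^2 - 3f + 1$, do not have reduction $\I_{10v(g)}$. Clearing denominators via $(x, y) \mapsto (g^2x, g^3y)$ gives the model
\[ a_1 = 2f^3 - 2f^2 - 2f + 1, \qquad a_2 = -f^3(f - 1)(2f - 1), \qquad a_3 = a_2g, \qquad a_4 = a_6 = 0, \]
with $\Delta = f^{10}(f - 1)^{10}(2f - 1)^5(4f^2 - 2f - 1)g^2$. So at such a place $v(\Delta) = 2v(g)$.

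Moreover $a_2$ is a unit there, so Lemma \ref{lem:odd} does not apply. The most you can get is the following. One has $b_2 \equiv (2f - 1)^4(4f - 11) \pmod g$, and the resultant of $g$ and $4f - 11$ is $5$. Hence for $p \ne 5$, Lemma \ref{lem:even} gives $2 \mid c_v(E)$, and nothing more, since the reduction may be non-split. For instance, for $f = t$ over $\F_3$ the place $t^2 + 1$ is $\I_2$ with $c_v(E) = 2$.

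Your claim is also inconsistent with the sharpness example in characteristic $2$. There $a_1 = 1$ at the roots of $g$, so your argument would force $10^3 \mid c(E)$ for $f = t$ over $\F_2$, whereas $c(E) = 2 \cdot 10^2$. Relatedly, $X_1(10)$ has eight cusps, of widths $10, 10, 5, 5, 2, 2, 1, 1$. The roots of $g$ are the width-$2$ cusps, and the roots of $4f^2 - 2f - 1$ are the width-$1$ cusps, not a non-cuspidal factor.

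The second factor of $5$ comes from the place at infinity, which you set aside. Use the change of variables $(x, y) \mapsto (g^2x/f^6, g^3y/f^9)$. This gives
\[ a_1 = \dfrac{2f^3 - 2f^2 - 2f + 1}{f^3}, \qquad a_2 = -\dfrac{(f - 1)(2f - 1)}{f^3}, \qquad a_3 = \dfrac{a_2g}{f^3}, \]
with $\Delta = (f - 1)^{10}(2f - 1)^5g^2(4f^2 - 2f - 1)/f^{26}$. At $v(f) < 0$ you get $a_1 \equiv 2$, $v(a_2) = -v(f)$, $v(a_3) = -2v(f)$ and $v(\Delta) = -5v(f)$. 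So for $p \ne 2$, Lemma \ref{lem:odd} gives split $\I_{-5v(f)}$.

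The correct accounting is therefore
\[ 5 \cdot 10^3 = 10 \cdot 10 \cdot 5 \cdot 5 \cdot 2, \]
from the places at $f$, $f - 1$, $2f - 1$, $\infty$ and $g$ respectively. This handles all three rows uniformly. The two factors of $5$ need $p \ne 2$: in characteristic $2$, $2f - 1$ is constant and $a_1 \to 0$ at infinity. The factor $2$ from $g$ needs only $p \ne 5$.

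Consequently, in characteristic $2$ your hedge should be resolved in favour of the place $f^2 + f + 1$ via Lemma \ref{lem:even}, not the place at infinity. In characteristic $5$, the two extra factors of $5$ come from $2f - 1 = 2(f + 2)$ and from $\infty$, exactly as in the generic case. Note also that $4f^2 - 2f - 1 = 4(f + 1)^2$ there, not $4(f + 2)^2$.
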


\begin{proof}
By Theorem \ref{thm:torsion}, $ E $ is given by the Weierstrass equation
$$ y^2 + \dfrac{f_3}{f_1}xy - \dfrac{f^3(f - 1)(2f - 1)}{f_1^2}y = x^3 - \dfrac{f^3(f - 1)(2f - 1)}{f_1^2}x^2, $$

\pagebreak

\noindent for some $ f \in k(t) \setminus k $, with
$$ \Delta = \dfrac{f^{10}(f - 1)^{10}(2f - 1)^5f_2}{f_1^{10}}, \qquad
\begin{aligned}
f_1 & := f^2 - 3f + 1, \\
f_2 & := 4f^2 - 2f - 1, \\
f_3 & := 2f^3 - 2f^2 - 2f + 1.
\end{aligned}
$$
There are distinct places $ v_0 $ and $ v_1 $ such that $ v_0(f) > 0 $ and $ v_1(f - 1) > 0 $. At $ v_0 $ and $ v_1 $,
$$
\begin{aligned}
v_0(a_1) & = 0, & v_0(a_2) & = v_0(a_3) = 3v_0(f) > 0, & v_0(\Delta) & = 10v_0(f), \\
v_1(a_1) & = 0, & v_1(a_2) & = v_1(a_3) = v_1(f - 1) > 0, & v_1(\Delta) & = 10v_1(f - 1).
\end{aligned}
$$
By Lemma \ref{lem:odd}, $ c_{v_0}(E) = 10v_0(f) $ and $ c_{v_1}(E) = 10v_1(f - 1) $, which give two factors $ 10 \mid c(E) $. Now the change of variables
$$ (x, y) \mapsto \left(\dfrac{f_1^2}{f^6}x, \dfrac{f_1^3}{f^9}y\right) $$
transforms $ E $ to the Weierstrass equation
$$ y^2 + \dfrac{f_3}{f^3}xy - \dfrac{(f - 1)(2f - 1)f_1}{f^6}y = x^3 - \dfrac{(f - 1)(2f - 1)}{f^3}x^2, $$
with
$$ \Delta = \dfrac{(f - 1)^{10}(2f - 1)^5f_1^2f_2}{f^{26}}. $$

Let $ p \ne 2 $. There are places $ v_2 $ and $ v_3 $, distinct from $ v_0 $ and $ v_1 $, such that $ v_2(2f - 1) > 0 $ and $ v_3(f) < 0 $. At $ v_2 $,
$$ v_2(a_1) = 0, \qquad v_2(a_2) = v_2(a_3) = v_2(2f - 1) > 0, \qquad v_2(\Delta) = 5v_2(2f - 1). $$
At $ v_3 $, Lemma \ref{lem:negative} gives
$$ v_3(a_1) = 0, \qquad
\begin{aligned}
v_3(a_2) & = -v_3(f) > 0, \\
v_3(a_3) & = -2v_3(f) > 0,
\end{aligned}
\qquad v_3(\Delta) = -5v_3(f). $$
By Lemma \ref{lem:odd}, $ c_{v_2}(E) = 5v_2(2f - 1) $ and $ c_{v_3}(E) = -5v_3(f) $, which give two factors $ 5 \mid c(E) $.

Now let $ p \ne 5 $. Since $ f_1 \notin k $ by Lemma \ref{lem:negative}, there is a place $ v_4 $, distinct from $ v_0, \dots, v_3 $, such that $ v_4(f_1) > 0 $. At $ v_4 $, Lemma \ref{lem:identity} \footnote{$ f_2 = 4f_1 + 5(2f - 1) $ and $ f^6b_2 = 4(f^4 - 17f^3 + 26f^2 - 15f + 3)f_1 + (2f - 1)^4(4f - 11) $} gives
$$ v_4(f) = v_4(f - 1) = v_4(2f - 1) = v_4(4f - 11) = v_4(f_2) = v_4(f^6b_2) = 0, $$
so
$$ v_4(c_4) = 2v_4(b_2) = 2v_4(4f - 11) = 0, \qquad v_4(\Delta) = 2v_4(f_1). $$
By Lemma \ref{lem:even}, $ 2 \mid c_{v_4}(E) $, which gives a factor $ 2 \mid c(E) $.
\end{proof}

\pagebreak

\begin{example}
The elliptic curve $ E / \F_3(t) $ given by the parameterisation in Proposition \ref{prop:c10} with $ f = t $ has $ c(E) = 5 \cdot 10^3 $. The elliptic curve $ E / \F_2(t) $ given by the same parameterisation with $ f = t $ has $ c(E) = 2 \cdot 10^2 $. If $ \F_{25} := \F_5[\alpha] / (\alpha^2 + 4\alpha + 2) $, then the elliptic curve $ E / \F_{25}(t) $ given by the same parameterisation with $ f = (2t^2 + \alpha) / (t^2 + 4\alpha) $ has $ c(E) = 5^2 \cdot 10^2 $.
\end{example}

\begin{proposition}
\label{prop:c12}
Let $ k $ be a finite field of characteristic $ p $, and let $ E / k(t) $ be a non-isotrivial elliptic curve with $ E(k(t))_{\tors} \cong \C_{12} $.
\begin{enumerate}
\item If $ p \ne 2, 3 $, then $ 12^4 $ divides $ c(E) $.
\item If $ p = 2 $, then $ 2^2 \cdot 12^2 $ divides $ c(E) $.
\item If $ p = 3 $, then $ 3 \cdot 6 \cdot 12^2 $ divides $ c(E) $.
\end{enumerate}
\end{proposition}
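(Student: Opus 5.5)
The plan follows the pattern of Propositions \ref{prop:c8} and \ref{prop:c10}. By Theorem \ref{thm:torsion}, $ E \cong \TT(a, b) $ with
$$ a = -\dfrac{f(2f - 1)(3f^2 - 3f + 1)}{(f - 1)^3}, \qquad b = -\dfrac{a(2f^2 - 2f + 1)}{f - 1}, $$
for some $ f \in k(t) \setminus k $. I would first clear denominators and compute the discriminant, which I expect to factor, up to a power of $ f - 1 $ in the denominator, as
$$ \Delta \doteq f^{12}(2f - 1)^6(2f^2 - 2f + 1)^3(3f^2 - 3f + 1)^4 f_5, \qquad f_5 := 6f^2 - 6f + 1. $$
The places I would use are:
\begin{itemize}
\item $ v_0 $ with $ v_0(f) > 0 $;
\item $ v_1 $ with $ v_1(f - 1) > 0 $;
\item $ v_\infty $ with $ v_\infty(f) < 0 $;
\item places at zeros of $ 2f - 1 $, of $ 2f^2 - 2f + 1 $ and of $ 3f^2 - 3f + 1 $.
\end{itemize}
All of these exist and are distinct by Lemma \ref{lem:negative}, and the nonlinear ones are handled with Lemma \ref{lem:identity}. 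Each one only makes sense when the relevant polynomial in $ f $ is non-constant in characteristic $ p $.

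The backbone, valid for every $ p $, is as follows. At $ v_0 $ the Tate normal form already has $ v(a_1) = 0 $, $ v(a_2), v(a_3) > 0 $ and $ v(\Delta) = 12v_0(f) $, so Lemma \ref{lem:odd} gives $ 12 \mid c_{v_0}(E) $. At $ v_1 $, I would apply a change of variables of the shape $ (x, y) \mapsto ((f - 1)^{2m}x, (f - 1)^{3m}y) $, with $ m $ chosen to make the equation integral with $ a_1 $ a unit. I expect this to give an $ \I_{12} $ split fibre, hence another factor $ 12 $. This already yields $ 12^2 $ in all characteristics.

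For $ p \ne 2, 3 $, I would add a third $ \I_{12} $ at $ v_\infty $, again after scaling by a power of $ f $ and checking $ v(a_1) = 0 $ via Lemma \ref{lem:negative}. I would then add two smaller contributions:
\begin{itemize}
\item $ \I_6 $ split at a zero of $ 2f - 1 $, by Lemma \ref{lem:odd}, contributing $ 6 $;
\item $ \I_4 $ at a zero of $ 3f^2 - 3f + 1 $, where I only claim evenness through Lemma \ref{lem:even} by checking $ v(c_4) = 0 $ with an explicit identity for Lemma \ref{lem:identity}, contributing at least $ 2 $.
\end{itemize}
Together these give $ 12^3 \cdot 6 \cdot 2 = 12^4 $.

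For $ p = 3 $, the polynomial $ 3f^2 - 3f + 1 = 1 $ is constant, and the point at infinity should degenerate. In that case I would use $ v_0 $, $ v_1 $ (giving $ 12^2 $), the $ \I_6 $ at $ 2f - 1 $ (giving $ 6 $), and a split $ \I_3 $ at a zero of $ 2f^2 - 2f + 1 $ via Lemma \ref{lem:odd} (giving $ 3 $). For $ p = 2 $, $ 2f - 1 $ and $ 2f^2 - 2f + 1 $ become units, so I would use $ v_0 $, $ v_1 $ and extract $ 2^2 $ from the place at $ 3f^2 - 3f + 1 $ (now $ f^2 + f + 1 $), hoping Lemma \ref{lem:odd} applies there to give a split $ \I_4 $. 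If it does not, I would instead take $ 2 $ from $ v_\infty $ and $ 2 $ from there via Lemma \ref{lem:even}.

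The main obstacle is the bookkeeping at $ v_\infty $ and at the quadratic places. There one must check that $ a_1 $ (or $ b_2 $) stays a unit, which needs explicit polynomial identities fed to Lemma \ref{lem:identity}. One must also check exactly which characteristic makes these fail, since that is what distinguishes the three cases and determines the exact exponents.
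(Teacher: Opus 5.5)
Your plans for $p = 3$ and (the primary one) for $p = 2$ agree with the paper. Case (1), however, rests on a false local claim: there is no third $\I_{12}$ at a place $v$ with $v(f) < 0$. Your own discriminant rules it out. For $p \ne 2, 3$ its numerator has degree $12 + 6 + 6 + 8 + 2 = 34$ in $f$ with unit leading coefficient, against $(f - 1)^{24}$, so the Tate normal form has $v(\Delta) = 10v(f)$. Since changes of variables shift $v(\Delta)$ by multiples of $12$, every model has $v(\Delta) \equiv -2v(f) \pmod{12}$. Concretely, rescale by $u = (f - 1)^3 / f^4$, so that $a_1 = (6f^4 - 8f^3 + 2f^2 + 2f - 1) / f^4$. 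Then $v(a_3) > 0$, $v(c_4) = 2v(b_2) = 2v(12) = 0$ and $v(\Delta) = -2v(f)$. So the fibre is $\I_{-2v(f)}$, and Lemma \ref{lem:even} only guarantees a factor $2$; for $f = t$ over $\F_{11}$ it is an $\I_2$. In terms of $X_1(12)$, $f = \infty$ is the cusp of width $2$, and the two cusps of width $12$ are $f = 0$ and $f = 1$, which you have already used.

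With this corrected, your contributions give only $12^2 \cdot 2 \cdot 6 \cdot 2$, a factor $6$ short of $12^4$. The missing factors come from places you did not exploit in this case.

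First, the split $\I_3$ at a zero of $2f^2 - 2f + 1$ is available for every $p \ne 2$, not only for $p = 3$. Lemma \ref{lem:identity}, with identities such as $(3f^2 - 3f + 1)(6f^2 - 6f + 1) = 9(f^2 - f)(2f^2 - 2f + 1) + 1$ and $2(6f^4 - 8f^3 + 2f^2 + 2f - 1) = (6f^2 - 2f - 3)(2f^2 - 2f + 1) + 1$, makes $a_1$ and the remaining factors of $\Delta$ units there, so Lemma \ref{lem:odd} applies.

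Second, at a zero of $3f^2 - 3f + 1$ you should not settle for evenness. The argument you hoped for when $p = 2$ ($a_1$ a unit and $v(a_2) = v(a_3) = v(3f^2 - 3f + 1) > 0$) works for every $p \ne 3$ and gives a split $\I_4$, hence a factor $4$. The correct count for $p \ne 2, 3$ is then $12 \cdot 12 \cdot 6 \cdot 3 \cdot 4 \cdot 2 = 12^4$, with the $2$ coming from $v(f) < 0$ as above.

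Your fallback for $p = 2$ would also fail. In characteristic $2$ one has $a_1 = (f - 1)^{-3}$, which is not a unit at $v(f) < 0$, and the reduction there is potentially good. The split $\I_4$ makes the fallback unnecessary.
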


\begin{proof}
By Theorem \ref{thm:torsion}, $ E $ is given by the Weierstrass equation
$$ y^2 + \dfrac{f_4}{(f - 1)^3}xy - \dfrac{f(2f - 1)f_1f_2}{(f - 1)^4}y = x^3 - \dfrac{f(2f - 1)f_1f_2}{(f - 1)^4}x^2, $$
for some $ f \in k(t) \setminus k $, with
$$ \Delta = \dfrac{f^{12}(2f - 1)^6f_1^3f_2^4f_3}{(f - 1)^{24}}, \qquad
\begin{aligned}
f_1 & := 2f^2 - 2f + 1, \\
f_2 & := 3f^2 - 3f + 1, \\
f_3 & := 6f^2 - 6f + 1, \\
f_4 & := 6f^4 - 8f^3 + 2f^2 + 2f - 1.
\end{aligned}
$$
There are distinct places $ v_0 $ and $ v_1 $ such that $ v_0(f) > 0 $ and $ v_1(f - 1) > 0 $. At $ v_0 $,
$$ v_0(a_1) = 0, \qquad v_0(a_2) = v_0(a_3) = v_0(f) > 0, \qquad v_0(\Delta) = 12v_0(f). $$
By Lemma \ref{lem:odd}, $ c_{v_0}(E) = 12v_0(f) $, which gives a factor $ 12 \mid c(E) $. Now the change of variables
$$ (x, y) \mapsto \left(\dfrac{(f - 1)^6}{f^8}x, \dfrac{(f - 1)^9}{f^{12}}y\right) $$
transforms $ E $ to the Weierstrass equation
$$ y^2 + \dfrac{f_4}{f^4}xy - \dfrac{(f - 1)^5(2f - 1)f_1f_2}{f^{11}}y = x^3 - \dfrac{(f - 1)^2(2f - 1)f_1f_2}{f^7}x^2, $$
with
$$ \Delta = \dfrac{(f - 1)^{12}(2f - 1)^6f_1^3f_2^4f_3}{f^{36}}. $$
At $ v_1 $,
$$ v_1(a_1) = 0, \qquad
\begin{aligned}
v_1(a_2) & = 2v_1(f - 1) > 0, \\
v_1(a_3) & = 5v_1(f - 1) > 0,
\end{aligned}
\qquad v_1(\Delta) = 12v_1(f - 1). $$
By Lemma \ref{lem:odd}, $ c_{v_1}(E) = 12v_1(f - 1) $, which gives another factor $ 12 \mid c(E) $.

\pagebreak

Let $ p \ne 2 $. Since $ f_1 \notin k $ by Lemma \ref{lem:negative}, there are places $ v_2 $ and $ v_3 $, distinct from $ v_0 $ and $ v_1 $, such that $ v_2(2f - 1) > 0 $ and $ v_3(f_1) > 0 $. At $ v_2 $,
$$ v_2(a_1) = 0, \qquad v_2(a_2) = v_2(a_3) = v_2(2f - 1) > 0, \qquad v_2(\Delta) = 6v_2(2f - 1). $$
At $ v_3 $, Lemma \ref{lem:identity} \footnote{$ f_2f_3 = 9(f^2 - f)f_1 + 1 $ and $ 2f_4 = (6f^2 - 2f - 3)f_1 + 1 $} gives
$$ v_3(f) = v_3(f - 1) = v_3(2f - 1) = v_3(f_2) = v_3(f_3) = v_3(f_4) = 0, $$
so
$$ v_3(a_1) = 0, \qquad v_3(a_2) = v_3(a_3) = v_3(f_1) > 0, \qquad v_3(\Delta) = 3v_3(f_1). $$
By Lemma \ref{lem:odd}, $ c_{v_2}(E) = 6v_2(2f - 1) $ and $ c_{v_3}(E) = 3v_3(f_1) $, which give a factor $ 6 \mid c(E) $ and a factor $ 3 \mid c(E) $.

Now let $ p \ne 3 $. Since $ f_2 \notin k $ by Lemma \ref{lem:negative}, there is a place $ v_4 $, distinct from $ v_0, \dots, v_3 $, such that $ v_4(f_2) > 0 $. At $ v_4 $, Lemma \ref{lem:identity} \footnote{$ 3f_1f_3 = 4(3f^2 - 3f + 1)f_2 - 1 $ and $ 3f_4 = 2(3f^2 - f - 1)f_2 + (2f - 1) $} gives
$$ v_4(f) = v_4(f - 1) = v_4(2f - 1) = v_4(f_1) = v_4(f_3) = v_4(f_4) = 0, $$
so
$$ v_4(a_1) = 0, \qquad v_4(a_2) = v_4(a_3) = v_4(f_2) > 0, \qquad v_4(\Delta) = 4v_4(f_2). $$
By Lemma \ref{lem:odd}, $ c_{v_4}(E) = 4v_4(f_2) $, which gives a factor $ 4 \mid c(E) $.

Now let $ p \ne 2, 3 $. There is a place $ v_5 $, distinct from $ v_0, \dots, v_4 $, such that $ v_5(f) < 0 $. At $ v_5 $, Lemma \ref{lem:negative} gives
$$ v_5(c_4) = 2v_5(b_2) = 2v_5(12) = 0, \qquad v_5(\Delta) = -2v_5(f). $$
By Lemma \ref{lem:even}, $ 2 \mid c_{v_5}(E) $, which gives a factor $ 2 \mid c(E) $.
\end{proof}

\begin{example}
The elliptic curve $ E / \F_{11}(t) $ given by the parameterisation in Proposition \ref{prop:c12} with $ f = t $ has $ c(E) = 12^4 $. If $ \F_8 := \F_2[\alpha] / (\alpha^3 + \alpha + 1) $, then the elliptic curve $ E / \F_8(t) $ given by the same parameterisation with $ f = (\alpha^2 + \alpha)t^3 + t^2 + \alpha $ has $ c(E) = 2^2 \cdot 12^2 $. The elliptic curve $ E / \F_3(t) $ given by the same parameterisation with $ f = t $ has $ c(E) = 3 \cdot 6 \cdot 12^2 $.
\end{example}

\begin{proposition}
\label{prop:c14}
Let $ k $ be a finite field of characteristic $ p $, and let $ E / k(t) $ be a non-isotrivial elliptic curve with $ E(k(t))_{\tors} \cong \C_{14} $.
\begin{enumerate}
\item If $ p = 2 $, then $ 2 \cdot 14^3 $ divides $ c(E) $.
\item If $ p = 7 $, then $ 7^3 \cdot 14^3 $ divides $ c(E) $.
\end{enumerate}
\end{proposition}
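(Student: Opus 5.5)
The plan follows the template of Propositions \ref{prop:c10} and \ref{prop:c12}, treating the two characteristics separately. In each case I take the Tate normal form $ \TT(a, b) $ from Theorem \ref{thm:torsion} for the $ (\C_{14}, p) $ family, so $ a_1 = 1 - a $, $ a_2 = a_3 = -b $, $ a_4 = a_6 = 0 $. I then factor $ \Delta $ as a rational function of $ f $ over $ \F_p $ and read off which linear or low-degree factors of $ f $ appear to exponent divisible by $ 14 $ or $ 7 $.

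For $ p = 2 $, we have $ a = f(f+1)^3/(f^3+f+1) $ and $ b = a/(f^3+f+1) $. I expect $ \Delta $ to be, up to a unit, of the shape
$$ \dfrac{f^{14}(f+1)^{14}\cdots}{(f^3+f+1)^{m}}. $$
\begin{itemize}
\item At places $ v_0, v_1 $ with $ v_0(f) > 0 $ and $ v_1(f+1) > 0 $, $ a_2 = a_3 $ has positive valuation and $ a_1 = 1 - a $ is a unit. Lemma \ref{lem:odd} then gives $ c_{v_0}(E) = 14v_0(f) $ and $ c_{v_1}(E) = 14v_1(f+1) $, hence $ 14^2 $.
\item At a place $ v_2 $ with $ v_2(f) < 0 $, I rescale by a power of $ f $ to make $ a_1 $ a unit and $ a_2, a_3 $ of positive valuation. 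Lemma \ref{lem:negative} then should give $ c_{v_2}(E) = 14(-v_2(f)) $.
\item At a place $ v_3 $ with $ v_3(f^3+f+1) > 0 $, I rescale by a power of $ f^3+f+1 $ to clear denominators. Lemma \ref{lem:identity}, applied to an explicit identity, shows the other factors are units, and Lemma \ref{lem:odd} should give $ c_{v_3}(E) = 2v_3(f^3+f+1) $ or a multiple of $ 2 $.
\end{itemize}
This yields $ 2\cdot 14^3 $. Some place might be of odd additive type; if so, I would instead locate the third factor $ 14 $ at the remaining multiplicative place.

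For $ p = 7 $, the denominators and numerators of $ a, b $ involve $ f, f+1, f+2, f+3, f+4, f+5, f+6 $ and the place at infinity. These are eight distinct places of degree one in $ f $, each with $ v(\cdot) > 0 $ and pairwise coprime because they differ by constants. I compute $ \Delta $ and sort these factors by exponent. I expect six of them (including infinity, after rescaling) to carry exponent $ 14 $ or $ 7 $ in a way that produces $ 14^3 $ and $ 7^3 $. At each such place I apply the appropriate change of variables $ (x,y) \mapsto (u^2x, u^3y) $, with $ u $ a product of powers of the denominator factors, so that $ a_1 $ is a unit and $ a_2, a_3 $ have positive valuation, and then invoke Lemma \ref{lem:odd}. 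Any place where $ v(\Delta) $ is even but Lemma \ref{lem:odd} fails, because of a nonsplit sign, is handled by Lemma \ref{lem:even} to extract a factor $ 2 $.

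The main obstacle is purely computational bookkeeping. The paper does not record the factorisation of $ \Delta $ for the $ \C_{14} $ parameterisations. I would first compute $ \Delta $, $ b_2 $ and $ c_4 $ symbolically in Magma, then pick for each place the minimal rescaling exponents to make the equation integral with $ v(a_1) = 0 $. Finally I would supply the coprimality identities needed for Lemma \ref{lem:identity}, particularly at the cubic factor $ f^3+f+1 $ when $ p = 2 $, where I must confirm that $ a_1 $ stays a unit after rescaling.
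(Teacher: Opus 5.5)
Your strategy is the paper's. It uses the Tate normal form from Theorem \ref{thm:torsion}, applies Lemma \ref{lem:odd} at zeros of linear factors in $f$ (after clearing denominators where needed), handles $v(f) < 0$ via Lemma \ref{lem:negative} when $p = 2$, and gets one extra factor $2$ from the cubic when $p = 2$. However, two of your specific claims do not hold as written. Both can be repaired with tools you already cite.

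\emph{The cubic place for $p = 2$.} Put $f_1 := f^3 + f + 1$ and $f_2 := f^2 + f + 1$, and take a place with $v(f_1) > 0$. Here $a_1 = f_2^2/f_1$, $a_2 = a_3 = -f(f+1)^3/f_1^2$ and $\Delta = f^{14}(f+1)^{14}/f_1^{10}$. Clearing denominators via $(x, y) \mapsto (f_1^2x, f_1^3y)$ gives $a_1 = f_2^2$, $a_2 = -f(f+1)^3$ and $\Delta = f^{14}(f+1)^{14}f_1^2$. The new $a_2$ is a \emph{unit} at this place, so the hypothesis $v(a_2) > 0$ of Lemma \ref{lem:odd} fails. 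The paper uses Lemma \ref{lem:even} instead:
\begin{itemize}
\item in characteristic $2$ one has $c_4 = a_1^4$;
\item $a_1 = f_2^2$ is a unit by Lemma \ref{lem:identity} applied to $f_2 = f_1 + f^2(f+1)$;
\item $v(\Delta) = 2v(f_1)$ is even.
\end{itemize}
This gives $2 \mid c_v(E)$ whether or not the reduction is split. (It is in fact split, since $f(f+1)^3 \equiv (f^2+f)f_2^4 \pmod{f_1}$ over $\F_2$. Seeing this through Lemma \ref{lem:odd} would need a further change of variables $y \mapsto y + sx$, which is unnecessary for a factor $2$.)

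\emph{The six places for $p = 7$.} Infinity is not one of them. The paper uses $v_i(f+i) > 0$ for $1 \le i \le 6$. Lemma \ref{lem:odd} gives $7v_i(f+i)$ for $i = 1, 2, 4$ and $14v_i(f+i)$ for $i = 3, 5, 6$. The cases $i = 2, 5$ need the rescaling $(x, y) \mapsto (f^2(f+2)^4(f+5)^2x, f^3(f+2)^6(f+5)^3y)$, after which $\Delta = f^3(f+1)^7(f+2)^7(f+3)^{14}(f+4)^7(f+5)^{14}(f+6)^{14}$. Neither $v(f) > 0$ nor $v(f) < 0$ is used, and neither forces a factor $7$. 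In the example after the proposition ($f = \alpha t^4$ over $\F_{49}$), $c(E)$ equals exactly $7^3 \cdot 14^3$, so those places contribute $1$. Your plan to compute $\Delta$ and sort would have exposed this, but you should not count on infinity.
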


\begin{proof}
Let $ p = 2 $. By Theorem \ref{thm:torsion}, $ E $ is given by the Weierstrass equation
$$ y^2 + \dfrac{f_2^2}{f_1}xy - \dfrac{f(f + 1)^3}{f_1^2}y = x^3 - \dfrac{f(f + 1)^3}{f_1^2}x^2, $$
for some $ f \in k(t) \setminus k $, with
$$ \Delta = \dfrac{f^{14}(f + 1)^{14}}{f_1^{10}}, \qquad f_1 := f^3 + f + 1, \qquad f_2 := f^2 + f + 1. $$

\pagebreak

\noindent Since $ f_1 \notin k $ by Lemma \ref{lem:negative}, there are distinct places $ v_0, \dots, v_3 $ such that
$$ v_0(f) > 0, \qquad v_1(f + 1) > 0, \qquad v_2(f_1) > 0, \qquad v_3(f) < 0. $$
At $ v_0 $ and $ v_1 $,
$$
\begin{aligned}
v_0(a_1) & = 0, & v_0(a_2) & = v_0(a_3) = v_0(f) > 0, & v_0(\Delta) & = 14v_0(f), \\
v_1(a_1) & = 0, & v_1(a_2) & = v_1(a_3) = 3v_1(f + 1) > 0, & v_1(\Delta) & = 14v_1(f + 1).
\end{aligned}
$$
By Lemma \ref{lem:odd}, $ c_{v_0}(E) = 14v_0(f) $ and $ c_{v_1}(E) = 14v_1(f + 1) $, which give two factors $ 14 \mid c(E) $. Now the change of variables
$$ (x, y) \mapsto \left(\dfrac{f_1^2}{f^8}x, \dfrac{f_1^3}{f^{12}}y\right) $$
transforms $ E $ to the Weierstrass equation
$$ y^2 + \dfrac{f_2^2}{f^4}xy - \dfrac{(f + 1)^3f_1}{f^{11}}y = x^3 - \dfrac{(f + 1)^3}{f^7}x^2, $$
with $ \Delta = (f + 1)^{14}f_1^2 / f^{34} $. At $ v_3 $, Lemma \ref{lem:negative} gives
$$ v_3(a_1) = 0, \qquad
\begin{aligned}
v_3(a_2) & = -4v_3(f) > 0, \\
v_3(a_3) & = -5v_3(f) > 0,
\end{aligned}
\qquad v_3(\Delta) = -14v_3(f). $$
By Lemma \ref{lem:odd}, $ c_{v_3}(E) = -14v_3(f) $, which gives another factor $ 14 \mid c(E) $. At $ v_2 $, Lemma \ref{lem:identity} \footnote{$ f_2 = f_1 + f^2(f + 1) $} gives
$$ v_2(f) = v_2(f + 1) = v_2(f_2) = 0, $$
so $ v_2(c_4) = 4v_2(a_1) = 0 $ and $ v_2(\Delta) = 2v_2(f_1) $. By Lemma \ref{lem:even}, $ 2 \mid c_{v_2}(E) $, which gives a factor $ 2 \mid c(E) $.

Now let $ p = 7 $. By Theorem \ref{thm:torsion}, $ E $ is given by the Weierstrass equation
$$
\begin{aligned}
& y^2 + \dfrac{6f_3f_4}{f(f + 2)^2(f + 5)}xy - \dfrac{(f + 1)^2(f + 3)^3(f + 4)(f + 5)^2(f + 6)}{4f^2(f + 2)^3}y \\
& \qquad = x^3 - \dfrac{(f + 1)^2(f + 3)^3(f + 4)(f + 5)^2(f + 6)}{4f^2(f + 2)^3}x^2,
\end{aligned}
$$
for some $ f \in k(t) \setminus k $, with
$$ \Delta = \dfrac{(f + 1)^7(f + 3)^{14}(f + 4)^7(f + 5)^2(f + 6)^{14}}{f^9(f + 2)^{17}}, $$
where $ f_3 := f^3 + 2f^2 + f + 4 $ and $ f_4 := f^3 + 4f^2 + 3f + 1 $. There are distinct places $ v_1, \dots, v_6 $ such that
$$
\begin{aligned}
v_1(f + 1) & > 0, & v_2(f + 2) & > 0, & v_3(f + 3) & > 0, \\
v_4(f + 4) & > 0, & v_5(f + 5) & > 0, & v_6(f + 6) & > 0.
\end{aligned}
$$

\pagebreak

\noindent At $ v_1, v_3, v_4, v_6 $,
$$
\begin{aligned}
v_1(a_1) & = 0, & v_1(a_2) & = v_1(a_3) = 2v_1(f + 1) > 0, & v_1(\Delta) & = 7v_1(f + 1), \\
v_3(a_1) & = 0, & v_3(a_2) & = v_3(a_3) = 3v_3(f + 3) > 0, & v_3(\Delta) & = 14v_3(f + 3), \\
v_4(a_1) & = 0, & v_4(a_2) & = v_4(a_3) = v_4(f + 4) > 0, & v_4(\Delta) & = 7v_4(f + 4), \\
v_6(a_1) & = 0, & v_6(a_2) & = v_6(a_3) = v_6(f + 6) > 0, & v_6(\Delta) & = 14v_6(f + 6).
\end{aligned}
$$
By Lemma \ref{lem:odd},
$$
\begin{aligned}
c_{v_1}(E) & = 7v_1(f + 1), & c_{v_3}(E) & = 14v_3(f + 3), \\
c_{v_4}(E) & = 7v_4(f + 4), & c_{v_6}(E) & = 14v_6(f + 6),
\end{aligned}
$$
which give two factors $ 7 \mid c(E) $ and two factors $ 14 \mid c(E) $. Now the change of variables
$$ (x, y) \mapsto (f^2(f + 2)^4(f + 5)^2x, f^3(f + 2)^6(f + 5)^3y) $$
transforms $ E $ to the Weierstrass equation
$$
\begin{aligned}
& y^2 + 6f_3f_4xy - \tfrac{1}{4}f(f + 1)^2(f + 2)^3(f + 3)^3(f + 4)(f + 5)^5(f + 6)y \\
& \qquad = x^3 - \tfrac{1}{4}(f + 1)^2(f + 2)(f + 3)^3(f + 4)(f + 5)^4(f + 6)x^2,
\end{aligned}
$$
with
$$ \Delta = f^3(f + 1)^7(f + 2)^7(f + 3)^{14}(f + 4)^7(f + 5)^{14}(f + 6)^{14}. $$
At $ v_2 $ and $ v_5 $,
$$
\begin{aligned}
v_2(a_1) & = 0, &
\begin{aligned}
v_2(a_2) & = v_2(f + 2) > 0, \\
v_2(a_3) & = 3v_2(f + 2) > 0,
\end{aligned}
& & v_2(\Delta) & = 7v_2(f + 2), \\
v_5(a_1) & = 0, &
\begin{aligned}
v_5(a_2) & = 4v_5(f + 5) > 0, \\
v_5(a_3) & = 5v_5(f + 5) > 0,
\end{aligned}
& & v_5(\Delta) & = 14v_5(f + 5).
\end{aligned}
$$
By Lemma \ref{lem:odd}, $ c_{v_2}(E) = 7v_2(f + 2) $ and $ c_{v_5}(E) = 14v_5(f + 5) $, which give another factor $ 7 \mid c(E) $ and another factor $ 14 \mid c(E) $.
\end{proof}

\begin{example}
The elliptic curve $ E / \F_2(t) $ given by its parameterisation in Proposition \ref{prop:c14} with $ f = t $ has $ c(E) = 2 \cdot 14^3 $. If $ \F_{49} := \F_7[\alpha] / (\alpha^2 + 6\alpha + 3) $, then the elliptic curve $ E / \F_{49}(t) $ given by its parameterisation in Proposition \ref{prop:c14} with $ f = \alpha t^4 $ has $ c(E) = 7^3 \cdot 14^3 $.
\end{example}

\begin{proposition}
\label{prop:c18}
Let $ k $ be a finite field of characteristic $ p = 2 $, and let $ E / k(t) $ be a non-isotrivial elliptic curve with $ E(k(t))_{\tors} \cong \C_{18} $. Then $ 2^2 \cdot 3 \cdot 18^3 $ divides $ c(E) $.
\end{proposition}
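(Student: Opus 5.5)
By Theorem \ref{thm:torsion}, $ E $ is isomorphic to the Tate normal form $ \TT(a, b) $ with
$$ a = \dfrac{f(f + 1)^3f_2}{f_1}, \qquad b = \dfrac{a(f + 1)^2}{f_1}, \qquad f_1 := f^3 + f + 1, \qquad f_2 := f^2 + f + 1, $$
for some $ f \in k(t) \setminus k $. I would follow the pattern of the $ p = 2 $ case of Proposition \ref{prop:c14}, which shares the same denominator $ f_1 $. The first step is to write out the Weierstrass equation, with $ a_1 = 1 - a $ and $ a_2 = a_3 = -b $. Then I would compute $ \Delta $ in characteristic $ 2 $ and factor it into powers of $ f $, $ f + 1 $, $ f_1 $ and $ f_2 $, together with possibly one further factor. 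The target is a shape like
$$ \Delta = \dfrac{f^{18}(f + 1)^{18}f_2^{e}}{f_1^{m}}. $$
The relevant places are $ v_0, v_1, v_2, v_3, v_4 $ with $ v_0(f) > 0 $, $ v_1(f + 1) > 0 $, $ v_2(f_1) > 0 $, $ v_3(f) < 0 $ and $ v_4(f_2) > 0 $. They are pairwise distinct by Lemma \ref{lem:negative} and Lemma \ref{lem:identity}, using identities such as $ f_1 = f_2 + f^2(f + 1) $, which is already used in Proposition \ref{prop:c14}, together with $ f^3 + f + 1 \equiv f $ modulo $ f_2 $ over $ \F_2 $.

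At $ v_0 $ and $ v_1 $ the equation is already integral, with $ v(a_1) = 0 $ and $ v(a_2), v(a_3) > 0 $. Lemma \ref{lem:odd} then gives $ c_{v_0}(E) = 18v_0(f) $ and $ c_{v_1}(E) = 18v_1(f + 1) $, which are two factors of $ 18 $.

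At $ v_3 $, I would change variables by $ (x, y) \mapsto (f_1^2x / f^{2n}, f_1^3y / f^{3n}) $ for the exponent $ n $ that makes $ a_1 $ a unit. By Lemma \ref{lem:negative} this should leave $ a_2, a_3 $ with positive valuation and $ v_3(\Delta) = -18v_3(f) $. Lemma \ref{lem:odd} then gives the third factor of $ 18 $.

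At $ v_4 $, where $ f_2 $ vanishes, I would use Lemma \ref{lem:identity} to show that $ f $, $ f + 1 $ and $ f_1 $ are units. Then $ v_4(a_1) = v_4(1 - a) = 0 $, since $ a \equiv 0 $, while $ a_2 $ and $ a_3 $ are divisible by $ f_2 $. Lemma \ref{lem:odd} gives $ c_{v_4}(E) = v_4(\Delta) $, which should be $ 3v_4(f_2) $, producing the factor $ 3 $. This mirrors the $ f_1 $ factor in $ \C_9 $.

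At $ v_2 $, after clearing the denominators $ f_1 $, I expect $ a_1 = (f_1 - a) / f_1 $ scaled to a unit, so $ v_2(c_4) = 4v_2(a_1) = 0 $. The power of $ f_1 $ remaining in $ \Delta $ should be even, and Lemma \ref{lem:even} then gives a factor $ 2 $.

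The main obstacle is the remaining factor of $ 2 $, needed to reach $ 2^2 $. Either $ v_2(\Delta) $ is divisible by $ 4 $ and the reduction at $ v_2 $ is split, or there is another place, at some other factor of $ \Delta $, where the reduction is multiplicative of even type. I would first compute the exact exponent of $ f_1 $ in the minimal $ \Delta $ and test the splitting polynomial $ T^2 + a_1T - a_2 $ modulo $ v_2 $, using that in characteristic $ 2 $ splitting is an Artin--Schreier condition. If that fails, I would instead look for a second square factor of $ \Delta $ coprime to everything else and apply Lemma \ref{lem:even} there.
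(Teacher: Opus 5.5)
Your places, and the lemmas you apply at each, match the paper's proof. The three factors $18$ at $v_0, v_1, v_3$ and the factor $2$ at $v_2$ via Lemma \ref{lem:even} are correct; at $v_2$ the coefficient $a_1$ is a unit because $f^3 + f^2 + 1 = f_1 + f(f + 1)$. The gap is that the proposal never reaches the statement. As written it only yields $2 \cdot 3 \cdot 18^3$, and the second factor of $2$ is left as an unresolved ``obstacle''. The cause is your guessed exponent of $f_2 = f^2 + f + 1$ in $\Delta$, which is $6$, not $3$. In characteristic $2$ one has $\Delta(\TT(a, b)) = b^3(b + a(a + 1)^3)$, with $a + 1 = (f^3 + f^2 + 1)^2 / f_1$. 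There is also the identity $(f + 1)f_1 + (f^3 + f^2 + 1)^3 = f^7f_2$ in characteristic $2$. Together these give
$$ b^3 = \dfrac{f^3(f + 1)^{15}f_2^3}{f_1^6}, \qquad b + a(a + 1)^3 = \dfrac{f^{15}(f + 1)^3f_2^3}{f_1^4}, \qquad \Delta = \dfrac{f^{18}(f + 1)^{18}f_2^6}{f_1^{10}}. $$
So $b^3$ contributes $f_2^3$ and the second factor contributes another $f_2^3$. Your analogy with the exponent $3$ of $f^2 - f + 1$ in the $\C_9$ family does not carry over. Lemma \ref{lem:odd} at $v_4$ therefore gives $c_{v_4}(E) = 6v_4(f_2)$, which supplies the factor $3$ and the missing factor $2$ at once. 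Combined with $2 \mid c_{v_2}(E)$ and the three factors $18$, this is exactly the paper's argument.

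Neither of your fallback routes would have closed the gap. After clearing denominators, $v_2(\Delta) = 2v_2(f_1)$, which equals $2$ for $f = t$ over $\F_2$. So $c_{v_2}(E) = 2$ there whether or not the reduction is split, and you cannot get $4 \mid c_{v_2}(E)$ in general. The only bad places are the zeros of $f$, $f + 1$, $f_1$, $f_2$ and the poles of $f$. Hence there is no further square factor of $\Delta$ to exploit. The fix is to compute $\Delta$ exactly rather than guess its shape; the place you had already identified then carries the full factor $6$.
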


\begin{proof}
By Theorem \ref{thm:torsion}, $ E $ is given by the Weierstrass equation
$$ y^2 + \dfrac{f_3^2}{f_2}xy - \dfrac{f(f + 1)^5f_1}{f_2^2}y = x^3 - \dfrac{f(f + 1)^5f_1}{f_2^2}x^2, $$

\pagebreak

\noindent for some $ f \in k(t) \setminus k $, with
$$ \Delta = \dfrac{f^{18}(f + 1)^{18}f_1^6}{f_2^{10}}, \qquad
\begin{aligned}
f_1 & := f^2 + f + 1, \\
f_2 & := f^3 + f + 1, \\
f_3 & := f^3 + f^2 + 1.
\end{aligned}
$$
Since $ f_1, f_2 \notin k $ by Lemma \ref{lem:negative}, there are distinct places $ v_0, \dots, v_4 $ such that
$$ v_0(f) > 0, \quad v_1(f + 1) > 0, \quad v_2(f_1) > 0, \quad v_3(f_2) > 0, \quad v_4(f) < 0. $$
At $ v_0 $ and $ v_1 $,
$$
\begin{aligned}
v_0(a_1) & = 0, & v_0(a_2) & = v_0(a_3) = v_0(f) > 0, & v_0(\Delta) & = 18v_0(f), \\
v_1(a_1) & = 0, & v_1(a_2) & = v_1(a_3) = 5v_1(f + 1) > 0, & v_1(\Delta) & = 18v_1(f + 1).
\end{aligned}
$$
At $ v_2 $, Lemma \ref{lem:identity} \footnote{$ f_2f_3 = (f^4 + f)f_1 + 1 $} gives
$$ v_2(f) = v_2(f + 1) = v_2(f_2) = v_2(f_3) = 0, $$
so
$$ v_2(a_1) = 0, \qquad v_2(a_2) = v_2(a_3) = v_2(f_1) > 0, \qquad v_2(\Delta) = 6v_2(f_1). $$
By Lemma \ref{lem:odd},
$$ c_{v_0}(E) = 18v_0(f), \qquad c_{v_1}(E) = 18v_1(f + 1), \qquad c_{v_2}(E) = 6v_2(f_1), $$
which give two factors $ 18 \mid c(E) $ and a factor $ 6 \mid c(E) $. Now the change of variables
$$ (x, y) \mapsto \left(\dfrac{f_2^2}{f^{12}}x, \dfrac{f_2^3}{f^{18}}y\right) $$
transforms $ E $ to the Weierstrass equation
$$ y^2 + \dfrac{f_3^2}{f^6}xy - \dfrac{(f + 1)^5f_1f_2}{f^{17}}y = x^3 - \dfrac{(f + 1)^5f_1}{f^{11}}x^2, $$
with $ \Delta = (f + 1)^{18}f_1^6f_2^2 / f^{54} $. At $ v_4 $, Lemma \ref{lem:negative} gives
$$ v_4(a_1) = 0, \qquad
\begin{aligned}
v_4(a_2) & = -4v_4(f) > 0, \\
v_4(a_3) & = -7v_4(f) > 0,
\end{aligned}
\qquad v_4(\Delta) = -18v_4(f). $$
By Lemma \ref{lem:odd}, $ c_{v_4}(E) = -18v_4(f) $, which gives another factor $ 18 \mid c(E) $. At $ v_3 $, Lemma \ref{lem:identity} \footnote{$ f_1f_3 = (f^2 + 1)f_2 + f^2 $} gives
$$ v_3(f) = v_3(f + 1) = v_3(f_1) = v_3(f_3) = 0, $$
so $ v_3(c_4) = 4v_3(a_1) = 0 $ and $ v_3(\Delta) = 2v_3(f_2) $. By Lemma \ref{lem:even}, $ 2 \mid c_{v_3}(E) $, which gives a factor $ 2 \mid c(E) $.
\end{proof}

\pagebreak

\begin{example}
The elliptic curve $ E / \F_2(t) $ given by the parameterisation in Proposition \ref{prop:c18} with $ f = t $ has $ c(E) = 2^2 \cdot 3 \cdot 18^3 $.
\end{example}

\section{Non-cyclic torsion}
\label{sec:noncyclic}

For a complete discretely valued field $ F $ with residue field $ \kappa $ and an elliptic curve $ E / F $ given by a minimal Weierstrass equation, recall that there is a surjective reduction map $ E_0(F) \to \widetilde{E}_{\ns}(\kappa) $ with kernel $ E_1(F) $, where $ \widetilde{E}_{\ns}(\kappa) $ is the group of non-singular points on the reduction $ \widetilde{E} / \kappa $ and $ E_0(F) $ is its corresponding subgroup of $ E(F) $ \cite[Proposition VII.2.1]{Sil09}.

\begin{lemma}
\label{lem:noncyclic}
Let $ k $ be a finite field of characteristic $ p $, and let $ E / k(t) $ be an elliptic curve such that $ E[n] \subseteq E(k(t)) $ for some integer $ n > 2 $ with $ p \nmid n $. Then $ \zeta_n \in k $. Furthermore, if $ E $ is given by an integral Weierstrass equation at a place $ v $ with $ v(c_4) = 0 $ and $ v(\Delta) > 0 $, then the equation is minimal with split multiplicative reduction at $ v $ of type $ \I_{v(\Delta)} $, and $ c_v(E) = v(\Delta) $.
\end{lemma}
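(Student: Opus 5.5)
The first claim follows from the Weil pairing. Since $ E[n] \subseteq E(k(t)) $ and $ p \nmid n $, the Weil pairing $ e_n : E[n] \times E[n] \to \mu_n $ is bilinear, alternating, non-degenerate and Galois equivariant. It is therefore surjective onto $ \mu_n $, so $ \mu_n \subseteq k(t) $. Since $ k $ is algebraically closed in $ k(t) $, roots of unity in $ k(t) $ are algebraic over $ k $ and hence lie in $ k $. This gives $ \zeta_n \in k $.

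For the second claim, $ v(c_4) = 0 $ with $ v(\Delta) > 0 $ means the equation is minimal, since $ c_4 $ is a unit. The reduction is multiplicative of type $ \I_{v(\Delta)} $ by Tate's algorithm terminating at step $ 2 $, exactly as in Lemma \ref{lem:even}. It remains to show the reduction is split, because then $ c_v(E) = v(\Delta) $ by \cite[Page 366]{Sil94}.

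To show splitness, I will use the reduction map $ E_0(K_v) \to \widetilde{E}_{\ns}(k_v) $ with kernel $ E_1(K_v) $. If the reduction were non-split, then $ \widetilde{E}_{\ns}(k_v) $ would be the norm-one torus of the quadratic extension of $ k_v $, cyclic of order $ |k_v| + 1 $, and $ c_v(E) \le 2 $ with $ E(K_v)/E_0(K_v) $ of order at most $ 2 $.
\begin{itemize}
\item $ E_1(K_v) $ has no non-trivial prime-to-$ p $ torsion, as it is isomorphic to the formal group $ \hat{E}(\mathfrak{m}) $. Hence $ E[n] $ embeds into $ E(K_v)/E_1(K_v) $.
\item $ E(K_v)/E_1(K_v) $ is an extension of a group of order at most $ 2 $ by a cyclic group.
\end{itemize}
The main obstacle is the counting. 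Since $ (\Z/n)^2 $ is not cyclic, the image of $ E[n] $ in the cyclic part has index at most $ 2 $. That forces $ n \mid 2 $, or more carefully $ E[n] \cap E_0 $ contains a subgroup isomorphic to $ (\Z/(n/2))^2 $ or $ \Z/n \oplus \Z/(n/2) $, which is not cyclic when $ n > 2 $, except possibly when $ n = 4 $.

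I will avoid this casework by arguing directly with the Galois action. Over the maximal unramified extension, the split/non-split distinction is detected by the action of Frobenius on the component group or character group of the torus. In the non-split case, Frobenius acts by $ -1 $ on the character group. The Tate parametrisation, valid over the unramified quadratic twist, shows that $ E[n] $ contains $ \mu_n $ twisted by the quadratic character as a Galois submodule. In the non-split case these points are then defined over $ K_v $ only if $ \zeta_n \mapsto \zeta_n^{-1} $ under Frobenius is trivial on them, i.e. only if the torsion points in this submodule have order at most $ 2 $. This contradicts $ E[n] \subseteq E(K_v) $ with $ n > 2 $. Hence the reduction is split, and $ c_v(E) = v(\Delta) $.
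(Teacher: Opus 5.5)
Your first part (Weil pairing, then $k$ algebraically closed in $k(t)$) and the minimality and type $\I_{v(\Delta)}$ step match the paper. For splitness you take a genuinely different route, and it is correct in outline. You write the non-split curve as the unramified quadratic twist by $\chi$ of a Tate curve $E_q$, so $\mu_n \subseteq E_q[n]$ becomes a Galois submodule $\mu_n \otimes \chi$ of $E[n]$. A Frobenius lift acts on it by $\zeta \mapsto \zeta^{-q_v}$. You should say explicitly that this is inversion because $\zeta_n \in k \subseteq k_v$ by the first part (equivalently $n \mid q_v - 1$); rationality of $E[n]$ then forces $n \mid 2$.

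The paper stays entirely inside the reduction sequence. In the non-split case $E_0(K_v)/E_1(K_v) \cong \C_{q_v+1}$ is cyclic and $E_1(K_v)$ has no prime-to-$p$ torsion, so $|E_0(K_v)[n]| \le n$. Meanwhile $E[n]/E_0(K_v)[n]$ injects into $E(K_v)/E_0(K_v)$, of order $c_v(E) \le 2$, giving $n^2 \le 2n$. This is exactly the counting you set aside as the main obstacle, and it needs no casework: a subgroup of a cyclic group killed by $n$ has order at most $n$. In particular there is no exception at $n = 4$ ($(\Z/2)^2$ and $\Z/4 \oplus \Z/2$ are not cyclic either), so your worry there was unfounded.

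The paper's argument is elementary and uses only the structure of $\widetilde{E}_{\ns}(k_v)$. Yours relies on heavier input: Tate uniformisation over $k_v((\pi))$ in every characteristic, including $2$ and $3$, and the identification of non-split multiplicative reduction with an unramified quadratic twist of a Tate curve. In exchange, it explains conceptually why splitness is forced: the toric part of $E[n]$ carries the $\chi$-twisted Frobenius action, which is incompatible with $\mu_n \subseteq K_v$ once $n > 2$.
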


\begin{proof}
The Weil pairing gives $ \zeta_n \in k(t) $ \cite[Corollary III.8.1.1]{Sil09}, so $ \zeta_n \in k $ \cite[Lemma 33.8.6]{Stacks}. Since $ v(c_4) = 0 $, the equation is minimal and Tate's algorithm terminates at step $ 2 $, so the reduction is multiplicative of type $ \I_{v(\Delta)} $ \cite[Page 366]{Sil94}. Suppose that it were non-split, so that $ c_v(E) \le 2 $. Then $ E_0(k(t)_v) / E_1(k(t)_v) $ is the kernel of the norm map $ \F_{q_v^2}^\times \to \F_{q_v}^\times $, where $ q_v := |k_v| $, which is isomorphic to $ \C_{q_v + 1} $. Now $ E_1(k(t)_v)[n] = 0 $ since $ p \nmid n $ \cite[Proposition VII.3.1]{Sil09}, so $ E_0(k(t)_v)[n] $ injects into $ \C_{q_v + 1} $, and hence $ |E_0(k(t)_v)[n]| \le n $. By the assumption that $ E[n] \subseteq E(k(t)) $,
$$ n^2 = |E(k(t)_v)[n]| \le c_v(E)|E_0(k(t)_v)[n]| \le 2n, $$
which contradicts $ n > 2 $. Thus the reduction is split, and $ c_v(E) = v(\Delta) $.
\end{proof}

\begin{proposition}
\label{prop:c3c3}
Let $ k $ be a finite field of characteristic $ p \ne 3 $, and let $ E / k(t) $ be a non-isotrivial elliptic curve with $ E(k(t))_{\tors} \cong \C_3 \oplus \C_3 $. Then $ 3^4 $ divides $ c(E) $.
\end{proposition}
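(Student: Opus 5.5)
By Theorem \ref{thm:torsion}, $ E $ is given by
$$ y^2 + 3(f + 2)xy + 9(f^2 + f + 1)y = x^3 $$
for some $ f \in k(t) \setminus k $. The plan is to find four distinct places of multiplicative reduction, each with $ v(\Delta) $ divisible by $ 3 $, and to use Lemma \ref{lem:noncyclic} to conclude that each is split, so that $ c_v(E) = v(\Delta) $. Since $ E[3] \subseteq E(k(t)) $ and $ p \ne 3 $, Lemma \ref{lem:noncyclic} first gives $ \zeta_3 \in k $. Hence $ f^2 + f + 1 = (f - \zeta_3)(f - \zeta_3^2) $ splits over $ k $, with $ \zeta_3 \ne \zeta_3^2 $, and neither root equals $ 1 $ because $ p \ne 3 $.

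Next I compute the invariants, using the formulas for $ y^2 + Axy + By = x^3 $ from Proposition \ref{prop:c3}. With $ A = 3(f + 2) $ and $ B = 9(f^2 + f + 1) $ one gets $ A^3 - 27B = 27(f - 1)^3 $. Therefore
$$ \Delta = B^3(A^3 - 27B) = 3^9(f - 1)^3(f - \zeta_3)^3(f - \zeta_3^2)^3, $$
$$ c_4 = A(A^3 - 24B) = 3^4f(f + 2)(f^2 - 2f + 4). $$
This factorisation of $ \Delta $ is the key point: all of its roots are linear in $ f $ over $ k $, each with multiplicity $ 3 $.

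Take distinct places $ v_1, v_2, v_3, v_4 $ with
$$ v_1(f - 1) > 0, \qquad v_2(f - \zeta_3) > 0, \qquad v_3(f - \zeta_3^2) > 0, \qquad v_4(f) < 0. $$
At $ v_1, v_2, v_3 $ the model is integral, and $ v_i(\Delta) $ equals $ 3 $ times the valuation of the corresponding linear factor. I then need $ v_i(c_4) = 0 $, which amounts to checking that $ f $, $ f + 2 $ and $ f^2 - 2f + 4 $ do not vanish at the roots $ 1, \zeta_3, \zeta_3^2 $.
\begin{itemize}
\item At $ f = 1 $ one gets $ c_4 = 3^4 \cdot 3 \cdot 3 = 3^6 \ne 0 $.
\item At $ f = \zeta_3 $ one has $ \zeta_3^2 - 2\zeta_3 + 4 = 3(1 - \zeta_3) \ne 0 $, and $ \zeta_3 + 2 \ne 0 $ since $ (-2)^2 - 2 + 1 = 3 \ne 0 $.
\item The case $ f = \zeta_3^2 $ is symmetric.
\end{itemize}
Here I use the remark after Lemma \ref{lem:negative} that $ v(f - n) = 0 $ for constants $ n $ different from the relevant root.

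At $ v_4 $, apply the change of variables $ (x, y) \mapsto (x / f^2, y / f^3) $. This gives $ a_1 = 3(f + 2)/f $ and $ a_3 = 9(f^2 + f + 1)/f^3 $, which are integral by Lemma \ref{lem:negative}. The new invariants are $ c_4 / f^4 $, with $ v_4(c_4 / f^4) = 0 $, and $ \Delta / f^{12} $, with $ v_4(\Delta / f^{12}) = -3v_4(f) > 0 $.

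In all four cases Lemma \ref{lem:noncyclic}, with $ n = 3 $, gives split multiplicative reduction and $ c_{v_i}(E) = v_i(\Delta) \in 3\Z $. Multiplying over the four places gives $ 3^4 \mid c(E) $.

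The only real obstacle is verifying the two polynomial identities $ A^3 - 27B = 27(f - 1)^3 $ and the factorisation of $ c_4 $, together with the coprimality checks. All of these are routine, and they are valid in every characteristic $ p \ne 3 $, including $ p = 2 $, since the relevant resultants are powers of $ 3 $.
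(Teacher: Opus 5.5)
Your proof is correct and follows the paper's argument. It uses the same model with $\zeta_3 \in k$, the same four places $f = 1, \zeta_3, \zeta_3^2, \infty$, and the same rescaling by $f$ at the pole. The only difference is that you invoke Lemma \ref{lem:noncyclic} at all four places via $c_4 = 3^4 f(f+2)(f^2-2f+4)$. The paper instead uses Lemma \ref{lem:odd} at $v_2, v_3, v_4$, where $a_1$ is a unit and $a_3$ has positive valuation, and needs the Weil-pairing splitness argument only at $v_1$, where $a_3 \equiv 27$ is a unit.
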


\begin{proof}
By Lemma \ref{lem:noncyclic}, $ \zeta_3 \in k $. By Theorem \ref{thm:torsion}, $ E $ is given by the Weierstrass equation
$$ y^2 + 3(f + 2)xy + 9(f - \zeta_3)(f - \zeta_3^2)y = x^3, $$
for some $ f \in k(t) \setminus k $, with $ \Delta = 3^9(f - 1)^3(f - \zeta_3)^3(f - \zeta_3^2)^3 $. There are distinct places $ v_1, \dots, v_4 $ such that
$$ v_1(f - 1) > 0, \qquad v_2(f - \zeta_3) > 0, \qquad v_3(f - \zeta_3^2) > 0, \qquad v_4(f) < 0. $$
At $ v_2 $ and $ v_3 $,
$$
\begin{aligned}
v_2(a_1) & = 0, & v_2(a_3) & = v_2(f - \zeta_3) > 0, & v_2(\Delta) & = 3v_2(f - \zeta_3), \\
v_3(a_1) & = 0, & v_3(a_3) & = v_3(f - \zeta_3^2) > 0, & v_3(\Delta) & = 3v_3(f - \zeta_3^2).
\end{aligned}
$$

\pagebreak

\noindent By Lemma \ref{lem:odd}, $ c_{v_2}(E) = 3v_2(f - \zeta_3) $ and $ c_{v_3}(E) = 3v_3(f - \zeta_3^2) $, which give two factors $ 3 \mid c(E) $. At $ v_1 $,
$$ v_1(c_4) = v_1(3^8 - 8 \cdot 3^6) = v_1(3^6) = 0, \qquad v_1(\Delta) = 3v_1(f - 1). $$
By Lemma \ref{lem:noncyclic}, $ c_{v_1}(E) = 3v_1(f - 1) $, which gives another factor $ 3 \mid c(E) $. Now the change of variables $ (x, y) \mapsto (x / f^2, y / f^3) $ transforms $ E $ to the Weierstrass equation
$$ y^2 + \dfrac{3(f + 2)}{f}xy + \dfrac{9(f - \zeta_3)(f - \zeta_3^2)}{f^3}y = x^3, $$
with $ \Delta = 3^9(f - 1)^3(f - \zeta_3)^3(f - \zeta_3^2)^3 / f^{12} $. At $ v_4 $, Lemma \ref{lem:negative} gives
$$ v_4(a_1) = 0, \qquad v_4(a_3) = -v_4(f) > 0, \qquad v_4(\Delta) = -3v_4(f). $$
By Lemma \ref{lem:odd}, $ c_{v_4}(E) = -3v_4(f) $, which gives another factor $ 3 \mid c(E) $.
\end{proof}

\begin{example}
If $ \F_4 := \F_2[\alpha] / (\alpha^2 + \alpha + 1) $, then the elliptic curve $ E / \F_4(t) $ given by the parameterisation in Proposition \ref{prop:c3c3} with $ f = t $ has $ c(E) = 3^4 $.
\end{example}

\begin{proposition}
\label{prop:c4c2}
Let $ k $ be a finite field of characteristic $ p \ne 2 $, and let $ E / k(t) $ be a non-isotrivial elliptic curve with $ E(k(t))_{\tors} \cong \C_4 \oplus \C_2 $. Then $ 4^2 \cdot 2^2 $ divides $ c(E) $.
\end{proposition}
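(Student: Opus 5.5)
The plan is to work directly with the parameterisation from Theorem \ref{thm:torsion}. There, $ E \cong \TT(0, b) $ with $ b = f^2 - \tfrac{1}{16} = (f - \tfrac{1}{4})(f + \tfrac{1}{4}) $ for some $ f \in k(t) \setminus k $. So $ E $ is given by
$$ y^2 + xy - by = x^3 - bx^2. $$
For this equation I expect
$$ b_2 = 1 - 4b, \qquad b_4 = -b, \qquad c_4 = (1 - 4b)^2 + 24b = 1 + 16b + 16b^2, \qquad \Delta = b^4(1 + 16b). $$
The key simplification is $ 1 + 16b = 16f^2 $, which gives
$$ \Delta = 16f^2(f - \tfrac{1}{4})^4(f + \tfrac{1}{4})^4, \qquad c_4 = 16f^2 + 16b^2. $$
I will verify both identities by direct expansion. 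Since $ p \ne 2 $, the constants $ \pm\tfrac{1}{4} $ and $ 0 $ are distinct, so there are four distinct places:
$$ v_1(f - \tfrac{1}{4}) > 0, \qquad v_2(f + \tfrac{1}{4}) > 0, \qquad v_0(f) > 0, \qquad v_3(f) < 0. $$

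At $ v_1 $ and $ v_2 $ we have $ v(a_1) = 0 $ and $ v(a_2) = v(a_3) = v(b) > 0 $, while $ v(\Delta) = 4v(f \mp \tfrac{1}{4}) $. Lemma \ref{lem:odd} then gives $ c_{v_i}(E) = 4v_i(f \mp \tfrac{1}{4}) $, which supplies the factor $ 4^2 $.

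At $ v_0 $ the reduction is multiplicative. Indeed $ b \equiv -\tfrac{1}{16} $ there, so $ c_4 \equiv 16b^2 = \tfrac{1}{16} $ is a unit, while $ v_0(\Delta) = 2v_0(f) $ is even and positive. Lemma \ref{lem:even} therefore gives $ 2 \mid c_{v_0}(E) $. Lemma \ref{lem:noncyclic} is unavailable here, because only $ E[2] $ is rational, so evenness is the right tool.

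At $ v_3 $ I apply the change of variables $ (x, y) \mapsto (x / f^2, y / f^3) $. The new coefficients are $ a_1 = 1/f $, $ a_2 = -b/f^2 $ and $ a_3 = -b/f^3 $, each of non-negative valuation by Lemma \ref{lem:negative}. The invariants become $ c_4 / f^4 $, whose leading term is $ 16 $ and so is a unit, and $ \Delta / f^{12} $, whose valuation is $ -2v_3(f) $, positive and even. Lemma \ref{lem:even} again gives a factor $ 2 $. Altogether $ 4^2 \cdot 2^2 \mid c(E) $.

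The main obstacle is only the bookkeeping of $ c_4 $ at $ v_0 $ and $ v_3 $. The reduction there need not be split, so I have to rely on the even valuation of $ \Delta $ rather than on $ c_v(E) = v(\Delta) $. This is also why those two places contribute only $ 2 $ each rather than anything larger.
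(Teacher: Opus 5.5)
Your proposal is correct and matches the paper's proof essentially step for step. You use Lemma \ref{lem:odd} at the zeros of $f \mp \tfrac{1}{4}$ to get $4^2$, and Lemma \ref{lem:even} at the zero and the pole of $f$ (the pole after rescaling by $f$) to get $2^2$, with the same unit checks on $c_4$ and the same valuations of $\Delta$.
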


\begin{proof}
By Theorem \ref{thm:torsion}, $ E $ is given by the Weierstrass equation
$$ y^2 + xy - (f - \tfrac{1}{4})(f + \tfrac{1}{4})y = x^3 - (f - \tfrac{1}{4})(f + \tfrac{1}{4})x^2, $$
for some $ f \in k(t) \setminus k $, with $ \Delta = 16f^2(f - \tfrac{1}{4})^4(f + \tfrac{1}{4})^4 $. There are distinct places $ v_1, \dots, v_4 $ such that
$$ v_1(f) > 0, \qquad v_2(f - \tfrac{1}{4}) > 0, \qquad v_3(f + \tfrac{1}{4}) > 0, \qquad v_4(f) < 0. $$
At $ v_2 $ and $ v_3 $,
$$
\begin{aligned}
v_2(a_1) & = 0, & v_2(a_2) & = v_2(a_3) = v_2(f - \tfrac{1}{4}) > 0, & v_2(\Delta) & = 4v_2(f - \tfrac{1}{4}), \\
v_3(a_1) & = 0, & v_3(a_2) & = v_3(a_3) = v_3(f + \tfrac{1}{4}) > 0, & v_3(\Delta) & = 4v_3(f + \tfrac{1}{4}).
\end{aligned}
$$
By Lemma \ref{lem:odd}, $ c_{v_2}(E) = 4v_2(f - \tfrac{1}{4}) $ and $ c_{v_3}(E) = 4v_3(f + \tfrac{1}{4}) $, which give two factors $ 4 \mid c(E) $. At $ v_1 $,
$$ v_1(c_4) = v_1((\tfrac{5}{4})^2 - \tfrac{3}{2}) = v_1(\tfrac{1}{16}) = 0, \qquad v_1(\Delta) = 2v_1(f). $$
By Lemma \ref{lem:even}, $ 2 \mid c_{v_1}(E) $, which gives a factor $ 2 \mid c(E) $. Now the change of variables $ (x, y) \mapsto (x / f^2, y / f^3) $ transforms $ E $ to the Weierstrass equation
$$ y^2 + \dfrac{1}{f}xy - \dfrac{(f - \tfrac{1}{4})(f + \tfrac{1}{4})}{f^3}y = x^3 - \dfrac{(f - \tfrac{1}{4})(f + \tfrac{1}{4})}{f^2}x^2, $$
with $ \Delta = 16(f - \tfrac{1}{4})^4(f + \tfrac{1}{4})^4 / f^{10} $. At $ v_4 $, Lemma \ref{lem:negative} gives
$$ v_4(c_4) = 2v_4(b_2) = 2v_4(4) = 0, \qquad v_4(\Delta) = -2v_4(f). $$
By Lemma \ref{lem:even}, $ 2 \mid c_{v_4}(E) $, which gives another factor $ 2 \mid c(E) $.
\end{proof}

\pagebreak

\begin{example}
The elliptic curve $ E / \F_3(t) $ given by the parameterisation in Proposition \ref{prop:c4c2} with $ f = t $ has $ c(E) = 4^2 \cdot 2^2 $.
\end{example}

\begin{proposition}
\label{prop:c6c2}
Let $ k $ be a finite field of characteristic $ p \ne 2 $, and let $ E / k(t) $ be a non-isotrivial elliptic curve with $ E(k(t))_{\tors} \cong \C_6 \oplus \C_2 $.
\begin{enumerate}
\item If $ p \ne 3 $, then $ 2^3 \cdot 6^3 $ divides $ c(E) $.
\item If $ p = 3 $, then $ 6^3 $ divides $ c(E) $.
\end{enumerate}
\end{proposition}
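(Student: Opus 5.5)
The plan follows the pattern of Propositions \ref{prop:c6} and \ref{prop:c4c2}. By Theorem \ref{thm:torsion}, $ E \cong \TT(a, b) $ with $ a = -2(f - 5) / ((f - 3)(f + 3)) $ and $ b = a(f - 1)^2 / (f^2 - 9) $ for some $ f \in k(t) \setminus k $.

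\emph{Reduction to the $ \C_6 $ normal form.} Writing $ c := a $, one checks that $ c + 1 = (f - 1)^2 / (f^2 - 9) $, so $ b = c(c + 1) $. Thus $ E $ is the $ \C_6 $ curve of Proposition \ref{prop:c6} with parameter $ c $. From $ \Delta = c^6(c + 1)^3(9c + 1) $ and the identity $ 9c + 1 = (f - 9)^2 / (f^2 - 9) $, a short computation should give
$$ \Delta = \dfrac{2^6(f - 5)^6(f - 1)^6(f - 9)^2}{(f - 3)^{10}(f + 3)^{10}}. $$
The relevant places are those where $ f $ equals $ 5, 1, 9, 3, -3 $, together with a place where $ v(f) < 0 $.

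\emph{The three factors of $ 6 $.} These come from Lemma \ref{lem:odd}, applied with $ a_1 = 1 - c $ and $ a_2 = a_3 = -c(c + 1) $.
\begin{itemize}
\item At $ v(f - 5) > 0 $: here $ v(c) = v(f - 5) > 0 $ and $ v(\Delta) = 6v(f - 5) $.
\item At $ v(f - 1) > 0 $: here $ c \equiv -1 $, so $ a_1 \equiv 2 $ is a unit since $ p \ne 2 $. Also $ v(a_2) = v(a_3) = 2v(f - 1) > 0 $ and $ v(\Delta) = 6v(f - 1) $.
\item At $ v(f) < 0 $: the original equation is already integral there. Lemma \ref{lem:negative} gives $ v(c) = -v(f) > 0 $, and $ v(\Delta) = (14 - 20)v(f) = -6v(f) $.
\end{itemize}
In all three cases the reduction is split multiplicative with $ 6 \mid c_v(E) $. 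This gives $ 6^3 \mid c(E) $ for every $ p \ne 2 $, including $ p = 3 $.

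\emph{The three extra factors of $ 2 $ when $ p \ne 3 $.} In that case $ 5, 1, 9, 3, -3 $ are pairwise distinct in $ k $, since their differences are $ \pm 2^i 3^j $. Each factor will come from Lemma \ref{lem:even}.
\begin{itemize}
\item At $ v(f - 9) > 0 $: the equation is integral and $ v(\Delta) = 2v(f - 9) $ is even. One needs $ v(c_4) = 0 $: since $ c \equiv -1/9 $ there, $ b_2 = (1 - c)^2 - 4c(c + 1) $ reduces to $ 100/81 + 32/81 = 132/81 $, a nonzero constant for $ p \ne 2, 3, 11 $. If $ p = 11 $ this fails, and instead $ c_4 $ should be computed directly from $ c_4 = b_2^2 - 24b_4 $.
\item At $ v(f \mp 3) > 0 $: $ c $ has a simple pole, so apply the change of variables $ (x, y) \mapsto (x / u^2, y / u^3) $ with $ u = 1/(f \mp 3) $, i.e.\ scale $ a_i $ by $ (f \mp 3)^i $. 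This makes the equation integral with $ v(\Delta) = (12 - 10)v(f \mp 3) = 2v(f \mp 3) $. The scaled $ b_2 $ reduces to the leading term of $ 5c^2 $ times $ (f \mp 3)^2 $, which is a unit.
\end{itemize}
Lemma \ref{lem:even} then gives $ 2 \mid c_v(E) $ at each of these three places, hence $ 2^3 \cdot 6^3 \mid c(E) $.

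For $ p = 3 $, the points $ 9, 3, -3 $ all collapse to $ 0 $, which explains why only $ 6^3 $ survives. An example over $ \F_3(t) $ or a small extension would then confirm sharpness.

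\emph{Main obstacle.} The hardest step is confirming that $ v(c_4) = 0 $ at the places where $ f = 9 $ and $ f = \pm 3 $. I would settle this with explicit identities of the kind in Lemma \ref{lem:identity}, verified by computer algebra. The case $ f = 9 $ with $ p = 11 $ needs the most care; if $ c_4 $ vanishes there, I would fall back on the Weil-pairing argument of Lemma \ref{lem:noncyclic} or on a direct run of Tate's algorithm.
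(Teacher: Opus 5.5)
Your route is the paper's. You use the same three places $f = 5$, $f = 1$, $v(f) < 0$ with Lemma \ref{lem:odd} for all $p \ne 2$, and, for $p \ne 3$, the same three places $f = 9$, $f = \pm 3$ with Lemma \ref{lem:even}. Your observation that $b = c(c+1)$, so that $E$ is the $\C_6$ curve of Proposition \ref{prop:c6} with parameter $c$, is a tidy way to see where $\Delta$ comes from. The outline is correct, but two of the local checks are wrong as written.

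\textbf{The place $f = 9$.} Here you cannot read off $v(c_4)$ from $b_2$. At this place $a_1 = 1 - c \equiv 10/9$ and $a_3 = -c(c+1) \equiv 8/81$, so $b_4 = a_1a_3 \equiv 80/729$. This is nonzero unless $p = 5$, so in general $c_4 \not\equiv b_2^2$. Computing directly,
$c_4 = (1 - 6c - 3c^2)^2 + 24(c - c^3) = (3c+1)(3c^3 + 3c^2 + 9c + 1)$,
which at $c = -1/9$ equals $2^4/3^6$, exactly the paper's value. So $v(c_4) = 0$ for every $p \ne 2, 3$, and $p = 11$ is not special. Your suggested fallback to Lemma \ref{lem:noncyclic} would not have been available anyway, since only $E[2]$ is rational here.

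\textbf{The places $f = \pm 3$.} The leading term of $b_2 = (1-c)^2 - 4c(c+1) = 1 - 6c - 3c^2$ is $-3c^2$, not $5c^2$. With your coefficient the scaled $b_2$ would vanish when $p = 5$. The correct reduction is $-3\,(c(f \mp 3))^2$, which equals $-4/3$ at $f = 3$ and $-64/3$ at $f = -3$; both are units for $p \ne 2, 3$. The paper's $48$ and $768$ are these values times $36$ (up to sign), because the paper scales $a_i$ by $((f-3)(f+3))^i$ instead. At these places the scaled $a_3$ has positive valuation and $a_4 = 0$, so the scaled $b_4$ has positive valuation and $c_4 \equiv b_2^2$ is legitimate there.
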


\begin{proof}
By Theorem \ref{thm:torsion}, $ E $ is given by the Weierstrass equation
$$ y^2 + \dfrac{f_1}{(f - 3)(f + 3)}xy + \dfrac{2(f - 5)(f - 1)^2}{(f - 3)^2(f + 3)^2}y = x^3 + \dfrac{2(f - 5)(f - 1)^2}{(f - 3)^2(f + 3)^2}x^2, $$
for some $ f \in k(t) \setminus k $, with
$$ \Delta = \dfrac{64(f - 5)^6(f - 1)^6(f - 9)^2}{(f - 3)^{10}(f + 3)^{10}}, \qquad f_1 := f^2 + 2f - 19. $$
There are distinct places $ v_1, v_2, v_3 $ such that
$$ v_1(f - 5) > 0, \qquad v_2(f - 1) > 0, \qquad v_3(f) < 0. $$
At $ v_1 $ and $ v_2 $,
$$
\begin{aligned}
v_1(a_1) & = 0, & v_1(a_2) & = v_1(a_3) = v_1(f - 5) > 0, & v_1(\Delta) & = 6v_1(f - 5), \\
v_2(a_1) & = 0, & v_2(a_2) & = v_2(a_3) = 2v_2(f - 1) > 0, & v_2(\Delta) & = 6v_2(f - 1).
\end{aligned}
$$
At $ v_3 $, Lemma \ref{lem:negative} gives
$$ v_3(a_1) = 0, \qquad v_3(a_2) = v_3(a_3) = -v_3(f) > 0, \qquad v_3(\Delta) = -6v_3(f). $$
By Lemma \ref{lem:odd},
$$ c_{v_1}(E) = 6v_1(f - 5), \qquad c_{v_2}(E) = 6v_2(f - 1), \qquad c_{v_3}(E) = -6v_3(f), $$
which give three factors $ 6 \mid c(E) $.

Let $ p \ne 3 $. There are places $ v_4, v_5, v_6 $, distinct from $ v_1, v_2, v_3 $, such that
$$ v_4(f - 9) > 0, \qquad v_5(f - 3) > 0, \qquad v_6(f + 3) > 0. $$
At $ v_4 $, we have $ v_4(c_4) = v_4(\tfrac{2^4}{3^6}) = 0 $ and $ v_4(\Delta) = 2v_4(f - 9) $. By Lemma \ref{lem:even}, $ 2 \mid c_{v_4}(E) $, which gives a factor $ 2 \mid c(E) $. Now the change of variables
$$ (x, y) \mapsto ((f - 3)^2(f + 3)^2x, (f - 3)^3(f + 3)^3y) $$
transforms $ E $ to the Weierstrass equation
$$ y^2 + f_1xy + 2(f - 5)(f - 1)^2(f - 3)(f + 3)y = x^3 + 2(f - 5)(f - 1)^2x^2, $$
with
$$ \Delta = 64(f - 5)^6(f - 1)^6(f - 9)^2(f - 3)^2(f + 3)^2. $$
At $ v_5 $ and $ v_6 $,
$$
\begin{aligned}
v_5(c_4) & = 2v_5(b_2) = 2v_5(48) = 0, & v_5(\Delta) & = 2v_5(f - 3), \\
v_6(c_4) & = 2v_6(b_2) = 2v_6(768) = 0, & v_6(\Delta) & = 2v_6(f + 3).
\end{aligned}
$$

\pagebreak

\noindent By Lemma \ref{lem:even}, $ 2 \mid c_{v_5}(E) $ and $ 2 \mid c_{v_6}(E) $, which give two more factors $ 2 \mid c(E) $.
\end{proof}

\begin{example}
The elliptic curve $ E / \F_5(t) $ given by the parameterisation in Proposition \ref{prop:c6c2} with $ f = t $ has $ c(E) = 2^3 \cdot 6^3 $. If $ \F_{27} := \F_3[\alpha] / (\alpha^3 + 2\alpha + 1) $, then the elliptic curve $ E / \F_{27}(t) $ given by the same parameterisation with $ f = (\alpha^2 + 2)t^2 / (t^2 + 1) $ has $ c(E) = 6^3 $.
\end{example}

\begin{proposition}
\label{prop:c8c2}
Let $ k $ be a finite field of characteristic $ p \ne 2 $, and let $ E / k(t) $ be a non-isotrivial elliptic curve with $ E(k(t))_{\tors} \cong \C_8 \oplus \C_2 $. Then $ 2^{16} $ divides $ c(E) $.
\end{proposition}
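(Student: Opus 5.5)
Take the $ \C_8 \oplus \C_2 $ parameterisation from Theorem \ref{thm:torsion}: $ E = \TT(a, b) $ with
$$ a = \dfrac{(2f + 1)(8f^2 + 4f + 1)}{2(4f + 1)(8f^2 - 1)f}, \qquad b = \dfrac{2af(4f + 1)}{8f^2 - 1} $$
for some $ f \in k(t) \setminus k $. The first step is to expand the Tate normal form and compute its discriminant as a product of powers of the linear and quadratic factors $ f $, $ 2f + 1 $, $ 4f + 1 $, $ 8f^2 - 1 $, $ 8f^2 + 4f + 1 $, and possibly $ 4f^2 - 1 $ or $ 8f^2 + 8f + 1 $, up to a power of $ 2 $. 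This mirrors the $ \C_8 $ case (Proposition \ref{prop:c8}), where $ \Delta $ had factors with exponents $ 8, 8, 4, 1 $. For full $ 2 $-torsion I expect the exponents to split as $ 8, 8, 4, 4, 2, 2 $ or similar, with total contribution $ 2^{16} $. The factor $ 2^{16} = 8^2 \cdot 4^2 \cdot 2^2 \cdot \ldots $ should come from the following pieces. Two places of type $ \I_{8n} $ come from the points of order $ 8 $, as in Proposition \ref{prop:c8}. Further places of type $ \I_{4n} $ and $ \I_{2n} $ come from the other cusps of $ X_1(8, 2) $; there are $ 6 $ or so cusps in total, matching the table entry.

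Concretely, for each irreducible factor $ G $ of the numerator or denominator of $ \Delta $ appearing in the parameterisation, I pick a place $ v $ with $ v(G(f)) > 0 $. Such places exist, and for $ v(f) < 0 $ as well, by Lemma \ref{lem:negative}. I then apply a change of variables $ (x, y) \mapsto (u^2 x, u^3 y) $, with $ u $ a product of powers of the relevant factors, to obtain an integral Weierstrass equation at $ v $. Lemma \ref{lem:identity} shows that the other factors are units at $ v $; the explicit polynomial identities would be found by computer algebra, as in the earlier footnotes. Where $ v(a_1) = 0 $ and $ v(a_2), v(a_3) > 0 $, Lemma \ref{lem:odd} gives $ c_v(E) = v(\Delta) $. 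Elsewhere I only check $ v(c_4) = 0 $ and use Lemma \ref{lem:noncyclic} with $ n = 4 $ (or $ 8 $ if available). This is legitimate since $ E[2] \subseteq E(k(t)) $ and actually $ \C_8 \oplus \C_2 $. Note that $ n $ must exceed $ 2 $, so $ E[2] $ alone is insufficient. The main route is then to use that $ E $ has $ \C_8 \oplus \C_2 $ torsion to force split reduction.

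The splitting argument is the delicate part. Lemma \ref{lem:noncyclic} as stated needs $ E[n] $ rational with $ n > 2 $, which fails here. So at places where only Lemma \ref{lem:even}-type information is available, I would instead argue with the counting trick from its proof. Since $ E(k(t)_v) $ contains $ \C_8 \oplus \C_2 $ and $ E_1 $ has no prime-to-$ p $ torsion, if the reduction were non-split then $ c_v(E) \le 2 $. This would force $ \C_8 \oplus \C_2 $ modulo a group of order $ \le 2 $ to inject into a cyclic group $ \C_{q_v + 1} $, which is impossible. Hence the reduction is split and $ c_v(E) = v(\Delta) $. The main obstacle I expect is bookkeeping: at the places coming from the quadratic factors (e.g. $ 8f^2 - 1 $, where $ \sqrt{2} $ may not lie in $ k $), I must verify $ v(c_4) = 0 $ via a computer-verified identity. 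I must also confirm that the multiplicities of $ \Delta $ indeed sum to the claimed $ 2 $-adic valuation $ 16 $ across all places. If some place only gives $ \I_{2n} $, the total must still multiply to $ 2^{16} $, so I would first tabulate $ v(\Delta) $ at each cusp before committing to which lemma handles which place.
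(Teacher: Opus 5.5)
\textbf{There is a genuine gap: the step you single out as the delicate part is false, and your anticipated tally does not reach $2^{16}$.} Your overall method is the paper's: take the Tate normal form, pick one place per factor of $\Delta$, rescale, then apply Lemma \ref{lem:odd} or Lemma \ref{lem:even}.

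\textbf{The splitting argument fails.} The counting trick gives no contradiction. $\C_8 \oplus \C_2$ has the cyclic subgroup $\C_8$ of index $2$, so the bound only reads $16 \le 2 \cdot 8$. (Lemma \ref{lem:noncyclic} is also unavailable, since $E[4] \not\subseteq E(k(t))$.)

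Non-split reduction genuinely occurs. Rescale the Tate normal form by $a_i \mapsto u^ia_i$ with $u = 2f(4f + 1)(8f^2 - 1)$. This gives:
\begin{itemize}
\item $a_1 = f_3 := 64f^4 - 24f^2 - 8f - 1$;
\item $a_2 = -4f^2(2f + 1)(4f + 1)^2f_1$, where $f_1 := 8f^2 + 4f + 1$;
\item $a_3$ divisible by $8f^2 - 1$;
\item $a_4 = a_6 = 0$.
\end{itemize}
At a place $v$ with $v(8f^2 - 1) > 0$, the node of the reduction is therefore $(0, 0)$. Its tangent cone $y^2 + a_1xy - a_2x^2$ has discriminant $b_2 \equiv -(4f + 1)^4$ modulo $8f^2 - 1$. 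So the reduction is split exactly when $-1$ is a square in $k_v$.

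For example, take $k = \F_7$ and $f = t^2 - 1$. Then $8f^2 - 1 = t^2(t^2 - 2)$, so the place $t = 0$ has $v(\Delta) = 4$. Since $-1$ is not a square in $\F_7$, the reduction there is non-split and $c_v(E) = 2$. The places above $8f^2 + 8f + 1$ behave the same way. So at these two cusps you cannot get $c_v(E) = v(\Delta)$. Lemma \ref{lem:even} gives only $2 \mid c_v(E)$, and that is all the paper uses there.

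\textbf{The tally is wrong.} Your guessed pattern gives $8 \cdot 8 \cdot 4 \cdot 4 \cdot 2 \cdot 2 = 2^{12}$, not $2^{16}$. The points of order $8$ do not give just two places of type $\I_{8n}$; there are four.

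In the Tate normal form,
\[ \Delta = \frac{(2f + 1)^8f_1^4f_2^2}{2^4f^4(4f + 1)^4(8f^2 - 1)^{10}}, \qquad f_2 := 8f^2 + 8f + 1. \]
The places contribute as follows.
\begin{itemize}
\item At $2f + 1$ and at a place with $v(f) < 0$, Lemma \ref{lem:odd} applies directly and gives type $\I_{8n}$.
\item At $f_1$, Lemma \ref{lem:odd} applies directly and gives type $\I_{4n}$. Here $v(a_1) = 0$ because $f_3 \equiv 4f + 1$ modulo $f_1$.
\item In the rescaled model, $\Delta = 2^8f^8(2f + 1)^8(4f + 1)^8(8f^2 - 1)^2f_1^4f_2^2$. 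Lemma \ref{lem:odd} applies there at $f$ and at $4f + 1$, giving two more places of type $\I_{8n}$.
\item At $8f^2 - 1$ and $f_2$, one only gets $v(c_4) = 0$ (checked via Lemma \ref{lem:identity}) with $v(\Delta)$ even. Lemma \ref{lem:even} then contributes a factor $2$ each.
\end{itemize}
Together this gives $8^4 \cdot 4 \cdot 2 \cdot 2 = 2^{16}$.
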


\begin{proof}
By Theorem \ref{thm:torsion}, $ E $ is given by the Weierstrass equation
$$ y^2 + \dfrac{f_3}{2f(4f + 1)(8f^2 - 1)}xy - \dfrac{(2f + 1)f_1}{(8f^2 - 1)^2}y = x^3 - \dfrac{(2f + 1)f_1}{(8f^2 - 1)^2}x^2, $$
for some $ f \in k(t) \setminus k $, with
$$ \Delta = \dfrac{(2f + 1)^8f_1^4f_2^2}{2^4f^4(4f + 1)^4(8f^2 - 1)^{10}}, \qquad
\begin{aligned}
f_1 & := 8f^2 + 4f + 1, \\
f_2 & := 8f^2 + 8f + 1, \\
f_3 & := 64f^4 - 24f^2 - 8f - 1.
\end{aligned}
$$
Since $ 8f^2 - 1, f_1, f_2 \notin k $ by Lemma \ref{lem:negative}, there are distinct places $ v_1, \dots, v_7 $ such that
$$
\begin{aligned}
v_1(f) & > 0, & v_2(2f + 1) & > 0, & v_3(4f + 1) & > 0, & v_4(8f^2 - 1) & > 0, \\
v_5(f_1) & > 0, & v_6(f_2) & > 0, & v_7(f) & < 0.
\end{aligned}
$$
At $ v_2 $,
$$ v_2(a_1) = 0, \qquad v_2(a_2) = v_2(a_3) = v_2(2f + 1) > 0, \qquad v_2(\Delta) = 8v_2(2f + 1). $$
At $ v_5 $, Lemma \ref{lem:identity} \footnote{$ (8f^2 - 1)f_2 = (8f^2 + 4f - 3)f_1 + 2 $ and $ f_3 = 2(4f^2 - 2f - 1)f_1 + (4f + 1) $} gives
$$ v_5(f) = v_5(2f + 1) = v_5(4f + 1) = v_5(8f^2 - 1) = v_5(f_2) = v_5(f_3) = 0, $$
so
$$ v_5(a_1) = 0, \qquad v_5(a_2) = v_5(a_3) = v_5(f_1) > 0, \qquad v_5(\Delta) = 4v_5(f_1). $$
At $ v_7 $, Lemma \ref{lem:negative} gives
$$ v_7(a_1) = 0, \qquad v_7(a_2) = v_7(a_3) = -v_7(f) > 0, \qquad v_7(\Delta) = -8v_7(f). $$
By Lemma \ref{lem:odd},
$$ c_{v_2}(E) = 8v_2(2f + 1), \qquad c_{v_5}(E) = 4v_5(f_1), \qquad c_{v_7}(E) = -8v_7(f), $$

\pagebreak

\noindent which give two factors $ 8 \mid c(E) $ and a factor $ 4 \mid c(E) $. At $ v_6 $, Lemma \ref{lem:identity} \footnote{$ (8f^2 - 1)f_1 = (8f^2 - 4f - 1)f_2 + 8f(4f + 1) $ and $ 2^4f^4(4f + 1)^4(8f^2 - 1)^4c_4 = 2^8(8192f^{14} + 24576f^{13} + 31744f^{12} + 22528f^{11} + 8832f^{10} + 1152f^9 - 656f^8 - 416f^7 - 112f^6 - 16f^5 - f^4)f_2 + (4f + 1)^8 $} gives
$$
\begin{aligned}
& v_6(f) = v_6(2f + 1) = v_6(4f + 1) = v_6(8f^2 - 1) = 0, \\
& v_6(f_1) = v_6(2^4f^4(4f + 1)^4(8f^2 - 1)^4c_4) = 0,
\end{aligned}
$$
so $ v_6(c_4) = 0 $ and $ v_6(\Delta) = 2v_6(f_2) $. By Lemma \ref{lem:even}, $ 2 \mid c_{v_6}(E) $, which gives a factor $ 2 \mid c(E) $. Now the change of variables
$$ (x, y) \mapsto (2^2f^2(4f + 1)^2(8f^2 - 1)^2x, 2^3f^3(4f + 1)^3(8f^2 - 1)^3y) $$
transforms $ E $ to the Weierstrass equation
$$
\begin{aligned}
& y^2 + f_3xy - 2^3f^3(2f + 1)(4f + 1)^3(8f^2 - 1)f_1y \\
& \qquad = x^3 - 2^2f^2(2f + 1)(4f + 1)^2f_1x^2,
\end{aligned}
$$
with
$$ \Delta = 2^8f^8(2f + 1)^8(4f + 1)^8(8f^2 - 1)^2f_1^4f_2^2. $$
At $ v_1 $ and $ v_3 $,
$$
\begin{aligned}
v_1(a_1) & = 0, &
\begin{aligned}
v_1(a_2) & = 2v_1(f) > 0, \\
v_1(a_3) & = 3v_1(f) > 0,
\end{aligned}
& & v_1(\Delta) & = 8v_1(f), \\
v_3(a_1) & = 0, &
\begin{aligned}
v_3(a_2) & = 2v_3(4f + 1) > 0, \\
v_3(a_3) & = 3v_3(4f + 1) > 0,
\end{aligned}
& & v_3(\Delta) & = 8v_3(4f + 1).
\end{aligned}
$$
By Lemma \ref{lem:odd}, $ c_{v_1}(E) = 8v_1(f) $ and $ c_{v_3}(E) = 8v_3(4f + 1) $, which give two more factors $ 8 \mid c(E) $. At $ v_4 $, Lemma \ref{lem:identity} \footnote{$ f_1f_2 = (8f^2 + 12f + 3)(8f^2 - 1) + 4(2f + 1)(4f + 1) $ and $ b_2 = 2(256f^6 - 256f^5 - 544f^4 - 336f^3 - 104f^2 - 16f - 1)(8f^2 - 1) - (4f + 1)^4 $} gives
$$ v_4(f) = v_4(2f + 1) = v_4(4f + 1) = v_4(f_1) = v_4(f_2) = v_4(b_2) = 0, $$
so $ v_4(c_4) = 2v_4(b_2) = 0 $ and $ v_4(\Delta) = 2v_4(8f^2 - 1) $. By Lemma \ref{lem:even}, $ 2 \mid c_{v_4}(E) $, which gives another factor $ 2 \mid c(E) $.
\end{proof}

\begin{example}
The elliptic curve $ E / \F_3(t) $ given by the parameterisation in Proposition \ref{prop:c8c2} with $ f = t $ has $ c(E) = 2^{16} $.
\end{example}

\begin{proposition}
\label{prop:c4c4}
Let $ k $ be a finite field of characteristic $ p \ne 2 $, and let $ E / k(t) $ be a non-isotrivial elliptic curve with $ E(k(t))_{\tors} \cong \C_4 \oplus \C_4 $. Then $ 4^6 $ divides $ c(E) $.
\end{proposition}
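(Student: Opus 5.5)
Following the pattern of Proposition \ref{prop:c4c2}, I would start from Theorem \ref{thm:torsion}. It says $E \cong \TT(0, b)$ with $b = f^4 - \tfrac{1}{16}$ for some $f \in k(t) \setminus k$, so
$$ y^2 + xy - by = x^3 - bx^2. $$
By Lemma \ref{lem:noncyclic} with $n = 4$ we have $\zeta_4 \in k$. Hence $b$ factors completely over $k$ as
$$ b = (f - \tfrac12)(f + \tfrac12)(f - \tfrac{\zeta_4}{2})(f + \tfrac{\zeta_4}{2}). $$
For $\TT(0, b)$ the discriminant is $\Delta = b^4(1 + 16b)$, which here equals $16f^4 b^4$. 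The four linear factors of $b$, together with $f$, are pairwise coprime with constant differences. So Lemma \ref{lem:negative} and the remarks after it give distinct places: $v_1, \dots, v_4$ at the four roots of $b$, $v_0$ with $v_0(f) > 0$, and $v_5$ with $v_5(f) < 0$.

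At each of $v_1, \dots, v_4$ we have $v(a_1) = 0$, $v(a_2) = v(a_3) = v(f - r) > 0$ and $v(\Delta) = 4v(f - r)$. Lemma \ref{lem:odd} then gives four factors $4 \mid c(E)$.

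At $v_0$, the quantity $b$ reduces to $-\tfrac{1}{16}$. Then $b_2 = 1 + 4b$ reduces to $\tfrac34$, and $c_4 = b_2^2 + 24b$ reduces to $\tfrac{9}{16} - \tfrac{24}{16} = -\tfrac{15}{16}$. This is a unit, because $p \ne 2$, and since $\zeta_4 \in k$ forces $p \equiv 1 \bmod 4$ we also have $p \ne 3, 5$. Also $v_0(\Delta) = 4v_0(f)$. Lemma \ref{lem:noncyclic} therefore gives split multiplicative reduction with $c_{v_0}(E) = 4v_0(f)$, a fifth factor $4$.

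At $v_5$, I would apply the change of variables $(x, y) \mapsto (x/f^2, y/f^3)$. This gives $a_1 = 1/f$, $a_2 = -b/f^2$, $a_3 = -b/f^3$, and the new discriminant is $16 b^4 / f^8$. Since $v_5(b) = 4v_5(f)$, this may not be integral, so I would instead use $(x, y) \mapsto (x/f^4, y/f^6)$. Then $a_1 = 1/f^2$, $a_2 = -b/f^4$ and $a_3 = -b/f^6$ are all integral, and
$$ \Delta = \frac{16 f^4 b^4}{f^{24}}, \qquad v_5(\Delta) = -4v_5(f). $$
Here $b_2 = a_1^2 + 4a_2 = (1 + 4b)/f^4$, whose leading term is $4$. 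So $v_5(b_2) = 0$ and $v_5(c_4) = 0$. Lemma \ref{lem:noncyclic} gives $c_{v_5}(E) = -4v_5(f)$, the sixth factor $4$.

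Together these give $4^6 \mid c(E)$. The main obstacle is checking the unit computations for $c_4$ at $v_0$ and $v_5$, especially excluding small characteristics through $\zeta_4 \in k$. At $v_5$ one must choose a scaling that makes the equation integral, and then rely on Lemma \ref{lem:noncyclic} rather than Lemma \ref{lem:odd}, because $a_1$ need not be a unit there.
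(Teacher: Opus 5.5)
Your route is the same as the paper's: the same six places, Lemma~\ref{lem:odd} at the four roots of $b$, and Lemma~\ref{lem:noncyclic} at $v_0$ and at $v_5$ after rescaling by $u = f^2$. However, the unit check at $v_0$ is wrong as written, and your argument fails there in characteristics $3$ and $5$.

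Since $a_1 = 1$ and $a_2 = -b$, you have $b_2 = a_1^2 + 4a_2 = 1 - 4b$, not $1 + 4b$. So at $v_0$ you get $b_2 \equiv \tfrac{5}{4}$ and $c_4 = b_2^2 + 24b = 16b^2 + 16b + 1 \equiv \tfrac{25}{16} - \tfrac{24}{16} = \tfrac{1}{16}$. This is a unit for every $p \ne 2$, and it is exactly the paper's computation.

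Your residue $-\tfrac{15}{16}$ vanishes when $p = 3$ or $p = 5$. The argument you give to exclude those characteristics is false:
\begin{itemize}
\item $\zeta_4 \in k$ only forces $q \equiv 1 \pmod 4$, not $p \equiv 1 \pmod 4$. For instance $k = \mathbb{F}_9$ has $p = 3$.
\item $p = 5$ satisfies $p \equiv 1 \pmod 4$ anyway. Indeed $\zeta_4 = 2 \in \mathbb{F}_5$, and the paper's own sharpness example lives over $\mathbb{F}_5(t)$.
\end{itemize}
Correcting the sign of $b_2$ repairs the $v_0$ step with no condition on $p$ beyond $p \ne 2$, so the ``small characteristic'' obstacle you flagged does not actually arise.

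The same sign slip occurs at $v_5$: there $b_2 = (1 - 4b)/f^4 \equiv -4$, not $4$. This is harmless, since $\pm 4$ is a unit. You should also say why $v_5(c_4) = 0$ follows from $v_5(b_2) = 0$: $b_4 = a_1a_3 = -b/f^8$ has $v_5(b_4) = -4v_5(f) > 0$, so $c_4 \equiv b_2^2 \equiv 16$.
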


\pagebreak

\begin{proof}
By Lemma \ref{lem:noncyclic}, $ \zeta_4 \in k $. By Theorem \ref{thm:torsion}, $ E $ is given by the Weierstrass equation
$$ y^2 + xy - \prod_{i = 1}^4(f - \tfrac{\zeta_4^i}{2})y = x^3 - \prod_{i = 1}^4(f - \tfrac{\zeta_4^i}{2})x^2, $$
for some $ f \in k(t) \setminus k $, with
$$ \Delta = 16f^4\prod_{i = 1}^4(f - \tfrac{\zeta_4^i}{2})^4. $$
There are distinct places $ v_0, \dots, v_5 $ such that
$$ v_0(f) > 0, \qquad v_5(f) < 0, \qquad v_i(f - \tfrac{\zeta_4^i}{2}) > 0, \qquad 1 \le i \le 4. $$
At $ v_0 $,
$$ v_0(c_4) = v_0((\tfrac{5}{4})^2 - \tfrac{3}{2}) = v_0(\tfrac{1}{16}) = 0, \qquad v_0(\Delta) = 4v_0(f). $$
By Lemma \ref{lem:noncyclic}, $ c_{v_0}(E) = 4v_0(f) $, which gives a factor $ 4 \mid c(E) $. At $ v_i $ for $ 1 \le i \le 4 $,
$$ v_i(a_1) = 0, \qquad v_i(a_2) = v_i(a_3) = v_i(f - \tfrac{\zeta_4^i}{2}) > 0, \qquad v_i(\Delta) = 4v_i(f - \tfrac{\zeta_4^i}{2}), $$
so by Lemma \ref{lem:odd}, $ c_{v_i}(E) = 4v_i(f - \tfrac{\zeta_4^i}{2}) $, which give four more factors $ 4 \mid c(E) $. Now the change of variables $ (x, y) \mapsto (x / f^4, y / f^6) $ transforms $ E $ to the Weierstrass equation
$$ y^2 + \dfrac{1}{f^2}xy - \dfrac{\prod_{i = 1}^4(f - \tfrac{\zeta_4^i}{2})}{f^6}y = x^3 - \dfrac{\prod_{i = 1}^4(f - \tfrac{\zeta_4^i}{2})}{f^4}x^2, $$
with
$$ \Delta = \dfrac{16\prod_{i = 1}^4(f - \tfrac{\zeta_4^i}{2})^4}{f^{20}}. $$
At $ v_5 $, Lemma \ref{lem:negative} gives
$$ v_5(c_4) = 2v_5(b_2) = 2v_5(4) = 0, \qquad v_5(\Delta) = -4v_5(f). $$
By Lemma \ref{lem:noncyclic}, $ c_{v_5}(E) = -4v_5(f) $, which gives another factor $ 4 \mid c(E) $.
\end{proof}

\begin{example}
The elliptic curve $ E / \F_5(t) $ given by the parameterisation in Proposition \ref{prop:c4c4} with $ f = t $ has $ c(E) = 4^6 $.
\end{example}

\begin{proposition}
\label{prop:c6c3}
Let $ k $ be a finite field of characteristic $ p \ne 3 $, and let $ E / k(t) $ be a non-isotrivial elliptic curve with $ E(k(t))_{\tors} \cong \C_6 \oplus \C_3 $.
\begin{enumerate}
\item If $ p \ne 2 $, then $ 3^4 \cdot 6^4 $ divides $ c(E) $.
\item If $ p = 2 $, then $ 6^4 $ divides $ c(E) $.
\end{enumerate}
\end{proposition}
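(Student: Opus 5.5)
Following the pattern of Propositions \ref{prop:c3c3} and \ref{prop:c4c4}, I would first apply Lemma \ref{lem:noncyclic} with $ n = 3 $ to get $ \zeta_3 \in k $, and then use Theorem \ref{thm:torsion} to write $ E = \TT(a, b) $ with
$$ a = -\dfrac{f(f^2 + f + 1)}{(f - 1)^3}, \qquad b = -\dfrac{a(4f^2 - 2f + 1)}{(f - 1)^3}, $$
for some $ f \in k(t) \setminus k $. Since $ \zeta_3 \in k $, both $ f^2 + f + 1 = (f - \zeta_3)(f - \zeta_3^2) $ and $ 4f^2 - 2f + 1 = 4(f + \zeta_3/2)(f + \zeta_3^2/2) $ split into linear factors, so every relevant place is cut out by a linear factor of $ f $. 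I would compute $ a_1 = 1 - a $, $ a_2 = -b $, $ a_3 = -b $ and the discriminant, which I expect to factor as a constant times powers of $ f $, $ f - \zeta_3 $, $ f - \zeta_3^2 $, $ 2f + \zeta_3 $, $ 2f + \zeta_3^2 $, $ f + \tfrac12 $ (or a similar factor from $ a_1 $), divided by a power of $ f - 1 $. Heuristically, since $ X_1(6) $-type cusps of $ \C_6 \oplus \C_3 $ come in orbits, the exponents should be $ 6 $ at the cusps where a full $ \C_6 \oplus \C_3 $ structure degenerates and $ 3 $ or $ 2 $ at others.

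Concretely, I would aim to locate four places giving factors of $ 6 $ (present in all characteristics $ p \ne 3 $), and, when $ p \ne 2 $, four further places giving factors of $ 3 $, which together give $ 3^4 \cdot 6^4 $; when $ p = 2 $, factors such as $ 2f + \zeta_3 $ degenerate to constants, so only the four factors of $ 6 $ survive. At places where $ v(f - \zeta_3^i) > 0 $ or $ v(f) > 0 $ I expect $ v(a_1) = 0 $ and $ v(a_2), v(a_3) > 0 $, so Lemma \ref{lem:odd} applies directly. At places where $ v(f - 1) > 0 $ or $ v(f) < 0 $, I would first perform a change of variables $ (x, y) \mapsto (u^2 x, u^3 y) $ with $ u $ a suitable power of $ f - 1 $ or $ f $ to make the equation integral, and then check either the hypotheses of Lemma \ref{lem:odd}, or that $ v(c_4) = 0 $ so that Lemma \ref{lem:noncyclic} gives split multiplicative reduction with $ c_v(E) = v(\Delta) $. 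Lemma \ref{lem:noncyclic} is particularly useful because it removes the need to check splitting at awkward places; at the places where $ v(a_1) > 0 $ it replaces Lemma \ref{lem:even}, giving the full $ v(\Delta) $ instead of just a factor $ 2 $.

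For the places given by nonlinear-looking factors I would use Lemma \ref{lem:identity} with explicit polynomial identities to show that the other factors are units there. Distinctness of places follows from the factors being coprime. Since $ v(\Delta) $ is a multiple of the exponent, each such place contributes the desired factor. The main obstacle is the bookkeeping: I need to find the correct discriminant factorisation and to check that $ c_4 $, or equivalently $ b_2 $, is a unit at the places where $ a_1 $ also vanishes. For example, at $ f = -\tfrac12 $ the coefficient $ 1 - a $ may vanish. I would handle this by computing $ b_2 $ explicitly and showing, via Lemma \ref{lem:identity}, that it has valuation zero unless $ p \in \{2, 3\} $. That computation is also where the dependence on $ p = 2 $ enters.
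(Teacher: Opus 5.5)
Your strategy is the paper's. Lemma \ref{lem:noncyclic} gives $\zeta_3 \in k$, and the discriminant is $f^6(f-\zeta_3)^6(f-\zeta_3^2)^6(2f+1)^3(2f+\zeta_3)^3(2f+\zeta_3^2)^3/(f-1)^{30}$. Lemma \ref{lem:odd} gives factors $6$ at $f$, $f-\zeta_3$, $f-\zeta_3^2$, and factors $3$ at $2f+\zeta_3$, $2f+\zeta_3^2$ and, after the rescaling $(x,y)\mapsto((f-1)^6x/f^6,(f-1)^9y/f^9)$, at $v(f)<0$. The places $v(f-1)>0$ (factor $6$) and $v(2f+1)>0$ (factor $3$) are handled by showing $v(c_4)=0$ and invoking Lemma \ref{lem:noncyclic}. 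For $p=2$ only the four places giving $6$ survive.

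One concrete step, as you phrase it, would not work. At the place with $v(2f+1)>0$, Lemma \ref{lem:odd} fails because $a_2$ and $a_3$ are units, not because of $a_1$. The residues there are $a_1=1-a\equiv 10/9$ and $a_2=a_3=-b\equiv 8/81$, so $a_1$ is also a unit unless $p=5$. Consequently $b_4=a_1a_3$ is generically a unit, and $v(c_4)=0$ is not equivalent to $v(b_2)=0$. Indeed $b_2\equiv 44/27$, which vanishes when $p=11$, so your claim that $b_2$ is a unit for all $p\ne 2,3$ is false. The fix is to compute $c_4=b_2^2-24b_4\equiv 2^4/3^6$ directly; the paper obtains $2^4\cdot 3^6$ in its rescaled model. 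This is a unit for $p\ne 2,3$, and Lemma \ref{lem:noncyclic} then gives $c_v(E)=3v(2f+1)$.

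Your shortcut through $b_2$ is legitimate at $f=1$. After rescaling, $a_1\equiv 3$, $a_2\equiv -9$ and $v(a_3)>0$, so $v(b_4)>0$ and $v(c_4)=2v(b_2)=2v(27)=0$. This is exactly what the paper uses there.
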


\pagebreak

\begin{proof}
By Lemma \ref{lem:noncyclic}, $ \zeta_3 \in k $. By Theorem \ref{thm:torsion}, $ E $ is given by the Weierstrass equation
$$
\begin{aligned}
& y^2 + \dfrac{f_1}{(f - 1)^3}xy - \dfrac{f(f - \zeta_3)(f - \zeta_3^2)(2f + \zeta_3)(2f + \zeta_3^2)}{(f - 1)^6}y \\
& \qquad = x^3 - \dfrac{f(f - \zeta_3)(f - \zeta_3^2)(2f + \zeta_3)(2f + \zeta_3^2)}{(f - 1)^6}x^2,
\end{aligned}
$$
for some $ f \in k(t) \setminus k $, with
$$ \Delta = \dfrac{f^6(f - \zeta_3)^6(f - \zeta_3^2)^6(2f + 1)^3(2f + \zeta_3)^3(2f + \zeta_3^2)^3}{(f - 1)^{30}}, $$
where $ f_1 := 2f^3 - 2f^2 + 4f - 1 $. There are distinct places $ v_0, \dots, v_3 $ such that
$$ v_0(f) > 0, \qquad v_1(f - 1) > 0, \qquad v_2(f - \zeta_3) > 0, \qquad v_3(f - \zeta_3^2) > 0. $$
At $ v_0, v_2, v_3 $,
$$
\begin{aligned}
v_0(a_1) & = 0, & v_0(a_2) & = v_0(a_3) = v_0(f) > 0, & v_0(\Delta) & = 6v_0(f), \\
v_2(a_1) & = 0, & v_2(a_2) & = v_2(a_3) = v_2(f - \zeta_3) > 0, & v_2(\Delta) & = 6v_2(f - \zeta_3), \\
v_3(a_1) & = 0, & v_3(a_2) & = v_3(a_3) = v_3(f - \zeta_3^2) > 0, & v_3(\Delta) & = 6v_3(f - \zeta_3^2).
\end{aligned}
$$
By Lemma \ref{lem:odd},
$$ c_{v_0}(E) = 6v_0(f), \qquad c_{v_2}(E) = 6v_2(f - \zeta_3), \qquad c_{v_3}(E) = 6v_3(f - \zeta_3^2), $$
which give three factors $ 6 \mid c(E) $. Now the change of variables
$$ (x, y) \mapsto \left(\dfrac{(f - 1)^6}{f^6}x, \dfrac{(f - 1)^9}{f^9}y\right) $$
transforms $ E $ to the Weierstrass equation
$$
\begin{aligned}
& y^2 + \dfrac{f_1}{f^3}xy - \dfrac{(f - 1)^3(f - \zeta_3)(f - \zeta_3^2)(2f + \zeta_3)(2f + \zeta_3^2)}{f^8}y \\
& \qquad = x^3 - \dfrac{(f - \zeta_3)(f - \zeta_3^2)(2f + \zeta_3)(2f + \zeta_3^2)}{f^5}x^2,
\end{aligned}
$$
with
$$ \Delta = \dfrac{(f - 1)^6(f - \zeta_3)^6(f - \zeta_3^2)^6(2f + 1)^3(2f + \zeta_3)^3(2f + \zeta_3^2)^3}{f^{30}}. $$
At $ v_1 $,
$$ v_1(c_4) = 2v_1(b_2) = 2v_1(27) = 0, \qquad v_1(\Delta) = 6v_1(f - 1). $$
By Lemma \ref{lem:noncyclic}, $ c_{v_1}(E) = 6v_1(f - 1) $, which gives another factor $ 6 \mid c(E) $.

Let $ p \ne 2 $. There are places $ v_4, \dots, v_7 $, distinct from $ v_0, \dots, v_3 $, such that
$$ v_4(2f + 1) > 0, \qquad v_5(2f + \zeta_3) > 0, \qquad v_6(2f + \zeta_3^2) > 0, \qquad v_7(f) < 0. $$

\pagebreak

\noindent At $ v_5 $ and $ v_6 $,
$$
\begin{aligned}
v_5(a_1) & = 0, & v_5(a_2) & = v_5(a_3) = v_5(2f + \zeta_3) > 0, & v_5(\Delta) & = 3v_5(2f + \zeta_3), \\
v_6(a_1) & = 0, & v_6(a_2) & = v_6(a_3) = v_6(2f + \zeta_3^2) > 0, & v_6(\Delta) & = 3v_6(2f + \zeta_3^2).
\end{aligned}
$$
At $ v_7 $, Lemma \ref{lem:negative} gives
$$ v_7(a_1) = 0, \qquad v_7(a_2) = v_7(a_3) = -v_7(f) > 0, \qquad v_7(\Delta) = -3v_7(f). $$
By Lemma \ref{lem:odd},
$$ c_{v_5}(E) = 3v_5(2f + \zeta_3), \qquad c_{v_6}(E) = 3v_6(2f + \zeta_3^2), \qquad c_{v_7}(E) = -3v_7(f), $$
which give three factors $ 3 \mid c(E) $. At $ v_4 $,
$$ v_4(c_4) = v_4(2^4 \cdot 3^6) = 0, \qquad v_4(\Delta) = 3v_4(2f + 1). $$
By Lemma \ref{lem:noncyclic}, $ c_{v_4}(E) = 3v_4(2f + 1) $, which gives another factor $ 3 \mid c(E) $.
\end{proof}

\begin{example}
The elliptic curve $ E / \F_7(t) $ given by the parameterisation in Proposition \ref{prop:c6c3} with $ f = t $ has $ c(E) = 3^4 \cdot 6^4 $. If $ \F_4 := \F_2[\alpha] / (\alpha^2 + \alpha + 1) $, then the elliptic curve $ E / \F_4(t) $ given by the same parameterisation with $ f = t $ has $ c(E) = 6^4 $.
\end{example}

\begin{proposition}
\label{prop:c5c5}
Let $ k $ be a finite field of characteristic $ p \ne 5 $, and let $ E / k(t) $ be a non-isotrivial elliptic curve with $ E(k(t))_{\tors} \cong \C_5 \oplus \C_5 $. Then $ 5^{12} $ divides $ c(E) $.
\end{proposition}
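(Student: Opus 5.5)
By Lemma \ref{lem:noncyclic}, $ \zeta_5 \in k $. By Theorem \ref{thm:torsion}, $ E \cong \TT(a, a) $ with
$$ a = \dfrac{f^4 + 2f^3 + 4f^2 + 3f + 1}{f^5 - 3f^4 + 4f^3 - 2f^2 + f}, $$
so $ E $ lies in the $ \C_5 $ family with Tate parameter $ b = a $. The plan is to express $ \Delta $ as a function of $ f $ and locate twelve places, each contributing a factor $ 5 $.

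For the Tate normal form $ \TT(b, b) $ we have $ \Delta = b^5(b^2 - 11b - 1) $. Substituting $ b = a $, the factor $ b^5 $ contributes the fifth powers of the numerator and denominator of $ a $. The denominator factors as $ f(f^4 - 3f^3 + 4f^2 - 2f + 1) $. I expect $ b^2 - 11b - 1 $ to become, after clearing denominators, a perfect fifth power of some degree-$ 2 $ or similar polynomial, up to a unit.

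Heuristically, $ X(5) $ has $ 12 $ cusps, all rational when $ \zeta_5 \in k $. So $ \Delta $, written in a suitable homogeneous form in $ f $ of total degree $ 60 $, should be a unit times $ \prod_{i = 1}^{12} (f - \alpha_i)^5 $, where the $ \alpha_i $ include $ \infty $ and the $ 11 $ roots of the icosahedral form. Here that form is $ f(f^{10} + 11f^5 - 1) $ up to a Möbius change, whose roots lie in $ k $ since $ \zeta_5 \in k $ and $ \sqrt{5} \in \Q(\zeta_5) $. The first concrete step is therefore to compute $ \Delta $ and $ c_4 $ explicitly and factor them over $ k $ into linear factors $ f - \alpha_i $. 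Each root should then appear with exponent $ 5 $ in $ \Delta $ and not at all in $ c_4 $, with $ f = \infty $ handled by a twist of the form $ (x, y) \mapsto (x / f^{2m}, y / f^{3m}) $.

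For each of the $ 12 $ cusps, I take a place $ v_i $ with $ v_i(f - \alpha_i) > 0 $, or with $ v(f) < 0 $ for the cusp at infinity. These places are distinct because the $ \alpha_i $ are distinct. At each $ v_i $:
\begin{enumerate}
\item Change variables to get an integral model, following the pattern of Propositions \ref{prop:c5} and \ref{prop:c4c4}.
\item Use Lemma \ref{lem:identity}, via explicit Bezout-type identities between $ f - \alpha_i $ and the remaining factors, to check that $ v_i(c_4) = 0 $ and $ v_i(\Delta) = 5v_i(f - \alpha_i) $.
\item Apply Lemma \ref{lem:noncyclic}, which is valid since $ E[5] \subseteq E(k(t)) $ and $ p \ne 5 $, to get split multiplicative reduction with $ c_{v_i}(E) = 5v_i(f - \alpha_i) $.
\end{enumerate}
This gives $ 5^{12} \mid c(E) $. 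Lemma \ref{lem:odd} would also suffice wherever $ a_1 $ is a unit and $ a_2, a_3, a_4, a_6 $ vanish, but Lemma \ref{lem:noncyclic} avoids needing such a special shape at every cusp.

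The main obstacle is the explicit factorisation. I need to verify that $ \Delta $ indeed splits into twelve distinct linear factors, each with multiplicity $ 5 $, over $ k $. I also need coprimality of $ c_4 $ with each factor, including in characteristics $ 2 $ and $ 3 $ and in small characteristics where the $ \alpha_i $ might collide. Since the $ 12 $ cusps of $ X(5) $ stay distinct for $ p \ne 5 $, collisions should not occur, but the Bezout identities will need remainders that are units, with powers of $ 5 $ only. I would first try computing the resultants of $ c_4 $ with each factor in Magma and checking that they are powers of $ 5 $.
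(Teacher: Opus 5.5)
Your proposal is correct and is essentially the paper's proof. The twelve cusps are $f = 0$, $f = 1 + \zeta_5^i$, $f = \zeta_5^i + \zeta_5^j$ for $1 \le i < j \le 4$, and $f = \infty$, and your guess is right: clearing denominators in $a^2 - 11a - 1$ gives exactly $-(f^2 + f - 1)^5 = -\prod_{i + j = 5}(f - \zeta_5^i - \zeta_5^j)^5$. The only difference is that the paper uses Lemma \ref{lem:odd} at the ten cusps where $a_1$ is a unit and $a_2, a_3$ vanish, and reserves Lemma \ref{lem:noncyclic} for the two cusps $f = \zeta_5^i + \zeta_5^{-i}$, whereas your uniform use of Lemma \ref{lem:noncyclic} (after checking $v(c_4) = 0$ everywhere) works equally well.
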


\begin{proof}
By Lemma \ref{lem:noncyclic}, $ \zeta_5 \in k $. By Theorem \ref{thm:torsion}, $ E $ is given by the Weierstrass equation
$$
\begin{aligned}
& y^2 + \dfrac{f_1}{f\prod_{i = 1}^4(f - 1 - \zeta_5^i)}xy - \dfrac{\prod_{i = 1}^4(f - \zeta_5^i - \zeta_5^{2i})}{f\prod_{i = 1}^4(f - 1 - \zeta_5^i)}y \\
& \qquad = x^3 - \dfrac{\prod_{i = 1}^4(f - \zeta_5^i - \zeta_5^{2i})}{f\prod_{i = 1}^4(f - 1 - \zeta_5^i)}x^2,
\end{aligned}
$$
for some $ f \in k(t) \setminus k $, with
$$ \Delta = -\dfrac{\prod_{1 \le i < j \le 4}(f - \zeta_5^i - \zeta_5^j)^5}{f^7\prod_{i = 1}^4(f - 1 - \zeta_5^i)^7}, \qquad f_1 := f^5 - 4f^4 + 2f^3 - 6f^2 - 2f - 1. $$
There are distinct places $ v_0, \dots, v_5 $ and $ v_{i, j} $ such that
$$
\begin{aligned}
v_0(f) & > 0, & v_i(f - 1 - \zeta_5^i) & > 0, & 1 \le i \le 4, \\
v_5(f) & < 0, & v_{i, j}(f - \zeta_5^i - \zeta_5^j) & > 0, & 1 \le i < j \le 4.
\end{aligned}
$$
At $ v_5 $, Lemma \ref{lem:negative} gives
$$ v_5(a_1) = 0, \qquad v_5(a_2) = v_5(a_3) = -v_5(f) > 0, \qquad v_5(\Delta) = -5v_5(f). $$

\pagebreak

\noindent At $ v_{i, j} $ for $ 1 \le i < j \le 4 $ with $ i + j \ne 5 $,
$$
\begin{aligned}
v_{i, j}(a_1) & = 0, & v_{i, j}(a_2) & = v_{i, j}(a_3) = v_{i, j}(f - \zeta_5^i - \zeta_5^j) > 0, \\
& & v_{i, j}(\Delta) & = 5v_{i, j}(f - \zeta_5^i - \zeta_5^j).
\end{aligned}
$$
By Lemma \ref{lem:odd}, $ c_{v_5}(E) = -5v_5(f) $ and
$$ c_{v_{i, j}}(E) = 5v_{i, j}(f - \zeta_5^i - \zeta_5^j), \qquad 1 \le i < j \le 4, \qquad i + j \ne 5, $$
which give five factors $ 5 \mid c(E) $. At $ v_{i, j} $ for $ 1 \le i < j \le 4 $ with $ i + j = 5 $,
$$ v_{i, j}(c_4) = v_{i, j}(\tfrac{5}{(5\zeta_5^i + 5\zeta_5^j - 3)^2}) = 0, \qquad v_{i, j}(\Delta) = 5v_{i, j}(f - \zeta_5^i - \zeta_5^j), $$
so by Lemma \ref{lem:noncyclic}, $ c_{v_{i, j}}(E) = 5v_{i, j}(f - \zeta_5^i - \zeta_5^j) $, which give two more factors $ 5 \mid c(E) $. Now the change of variables
$$ (x, y) \mapsto \left(f^2x\prod_{i = 1}^4(f - 1 - \zeta_5^i)^2, f^3y\prod_{i = 1}^4(f - 1 - \zeta_5^i)^3\right) $$
transforms $ E $ to the Weierstrass equation
$$
\begin{aligned}
& y^2 + f_1xy - f^2\prod_{i = 1}^4(f - 1 - \zeta_5^i)^2(f - \zeta_5^i - \zeta_5^{2i})y \\
& \qquad = x^3 - f\prod_{i = 1}^4(f - 1 - \zeta_5^i)(f - \zeta_5^i - \zeta_5^{2i})x^2,
\end{aligned}
$$
with
$$ \Delta = -f^5\prod_{i = 1}^4(f - 1 - \zeta_5^i)^5\prod_{1 \le i < j \le 4}(f - \zeta_5^i - \zeta_5^j)^5. $$
At $ v_0 $,
$$ v_0(a_1) = 0, \qquad
\begin{aligned}
v_0(a_2) & = v_0(f) > 0, \\
v_0(a_3) & = 2v_0(f) > 0,
\end{aligned}
\qquad v_0(\Delta) = 5v_0(f). $$
At $ v_i $ for $ 1 \le i \le 4 $,
$$ v_i(a_1) = 0, \qquad
\begin{aligned}
v_i(a_2) & = v_i(f - 1 - \zeta_5^i) > 0, \\
v_i(a_3) & = 2v_i(f - 1 - \zeta_5^i) > 0,
\end{aligned}
\qquad v_i(\Delta) = 5v_i(f - 1 - \zeta_5^i). $$
By Lemma \ref{lem:odd}, $ c_{v_0}(E) = 5v_0(f) $ and
$$ c_{v_i}(E) = 5v_i(f - 1 - \zeta_5^i), \qquad 1 \le i \le 4, $$
which give five more factors $ 5 \mid c(E) $.
\end{proof}

\begin{example}
The elliptic curve $ E / \F_{11}(t) $ given by the parameterisation in Proposition \ref{prop:c5c5} with $ f = t $ has $ c(E) = 5^{12} $.
\end{example}

\pagebreak

\begin{proposition}
\label{prop:c10c2}
Let $ k $ be a finite field of characteristic $ 5 $, and let $ E / k(t) $ be a non-isotrivial elliptic curve with $ E(k(t))_{\tors} \cong \C_{10} \oplus \C_2 $. Then $ 10^6 $ divides $ c(E) $.
\end{proposition}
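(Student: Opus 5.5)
Following the pattern of the previous propositions, I will start from the $(\C_{10} \oplus \C_2, 5)$ parameterisation in Theorem \ref{thm:torsion}, with Tate normal form coefficients
$$ a = \dfrac{f(f + 1)(f + 2)^2(f + 3)(f + 4)}{(f^2 + 4f + 1)^2}, \qquad b = \dfrac{a(f + 1)^2(f + 3)^2}{4(f^2 + 4f + 1)^2}, $$
for some $ f \in k(t) \setminus k $. Working in characteristic $5$, I will write out the Weierstrass equation $y^2 + (1 - a)xy - by = x^3 - bx^2$ and compute $\Delta$, which should factor as a product of powers of $f$, $f + 1, \dots, f + 4$, $f^2 + 4f + 1$, and possibly a further quadratic factor, up to a constant. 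The target is $10^6 = 2^6 \cdot 5^6$. I expect this to come from six places, each with $c_v(E)$ divisible by $10$, namely $v_0, \dots, v_4$ with $v_i(f + i) > 0$ and $v_5$ with $v_5(f) < 0$. These places are distinct because $f$ is non-constant.

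At each such place, I will try to verify the hypotheses of Lemma \ref{lem:odd}: $v(a_1) = 0$, $v(a_2), v(a_3) > 0$, and $v(\Delta) = 10\,v(f + i)$ (respectively $-10\,v_5(f)$). When $a_1 = 1 - a$ is a unit but $b$ has a pole, I will first apply a scaling change of variables $(x, y) \mapsto (u^2x, u^3y)$, clearing denominators by powers of $f^2 + 4f + 1$ or of $f$, exactly as in Propositions \ref{prop:c10} and \ref{prop:c12}. At the place $v_5$, I will pass to the model scaled by a power of $f$ and use Lemma \ref{lem:negative} to read off the valuations. At $v_2$, where $a$ vanishes to order $2$, and at the places where $b$ picks up the extra factor $(f + 1)^2(f + 3)^2$, I must check that $v(\Delta)$ is exactly ten times the local valuation, so that the $\I_{10m}$ component count is divisible by $10$.

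If some of these places only give $\I_{5m}$, I will fall back on another argument: either $v$ of the factor $f^2 + 4f + 1$ (where $c_4$ should be a unit, by an identity verified through Lemma \ref{lem:identity}) or the remaining places supply even $v(\Delta)$, and Lemma \ref{lem:even} then contributes the missing powers of $2$. Since $E[2] \subseteq E(k(t))$ and $p \ne 2$, the argument of Lemma \ref{lem:noncyclic} with $n = 2$ does not force split reduction. However, multiplicative reduction with a full rational $2$-torsion subgroup at a place with $v(\Delta)$ even still gives $2 \mid c_v$, which is exactly Lemma \ref{lem:even}.

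I expect the main obstacle to be the bookkeeping of valuations: finding the correct rescaling at the poles, namely at $f^2 + 4f + 1$ and at $f = \infty$, and confirming that the count of $10$s (rather than $5$s) is six. Concretely, this requires computing $\Delta$ in characteristic $5$ exactly and producing auxiliary polynomial identities, checked by computer algebra, for use with Lemma \ref{lem:identity} at any non-linear place.
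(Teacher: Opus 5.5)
Your plan is correct and is essentially the paper's proof. At the five places $v_i$ with $v_i(f+i)>0$, the Tate normal form is already integral with $v_i(a_1)=0$ and $v_i(a_2)=v_i(a_3)>0$, so no rescaling is needed there: $f^2+4f+1$ takes the values $1,3,2,3,1$ at $f=-i$ modulo $5$, hence is a unit at each $v_i$. At $v_5$ the rescaling $(x,y)\mapsto\bigl((f^2+4f+1)^4x/f^{12},(f^2+4f+1)^6y/f^{18}\bigr)$ does the job, and Lemma \ref{lem:odd} then gives $c_v(E)=10\,v(f+i)$, respectively $-10\,v_5(f)$, at all six places. Moreover, $\Delta=4\prod_{i=0}^4(f+i)^{10}/(f^2+4f+1)^{18}$ has no further factor, so your fallback via Lemma \ref{lem:even} is never needed.
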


\begin{proof}
By Theorem \ref{thm:torsion}, $ E $ is given by the Weierstrass equation
$$
\begin{aligned}
& y^2 + \dfrac{4(f^2 + 4f + 2)(f^4 + 3f^3 + f + 2)}{f_1^2}xy \\
& \quad - \dfrac{f(f + 1)^3(f + 2)^2(f + 3)^3(f + 4)}{4f_1^4}y \\
& \qquad = x^3 - \dfrac{f(f + 1)^3(f + 2)^2(f + 3)^3(f + 4)}{4f_1^4}x^2,
\end{aligned}
$$
for some $ f \in k(t) \setminus k $, with
$$ \Delta = \dfrac{4\prod_{i = 0}^4(f + i)^{10}}{f_1^{18}}, \qquad f_1 := f^2 + 4f + 1. $$
There are distinct places $ v_0, \dots, v_5 $ such that
$$ v_5(f) < 0, \qquad v_i(f + i) > 0, \qquad 0 \le i \le 4. $$
At $ v_i $ for $ 0 \le i \le 4 $,
$$ v_i(a_1) = 0, \qquad v_i(a_2) = v_i(a_3) \ge v_i(f + i) > 0, \qquad v_i(\Delta) = 10v_i(f + i), $$
so by Lemma \ref{lem:odd}, $ c_{v_i}(E) = 10v_i(f + i) $, which give five factors $ 10 \mid c(E) $. Now the change of variables
$$ (x, y) \mapsto \left(\dfrac{f_1^4}{f^{12}}x, \dfrac{f_1^6}{f^{18}}y\right) $$
transforms $ E $ to the Weierstrass equation
$$
\begin{aligned}
& y^2 + \dfrac{(3f^2 + 2f + 1)(3f^4 + 4f^3 + 3f + 1)}{f^6}xy \\
& \quad - \dfrac{(f + 1)^3(f + 2)^2(f + 3)^3(f + 4)f_1^2}{4f^{17}}y \\
& \qquad = x^3 - \dfrac{(f + 1)^3(f + 2)^2(f + 3)^3(f + 4)}{4f^{11}}x^2,
\end{aligned}
$$
with
$$ \Delta = \dfrac{4\prod_{i = 1}^4(f + i)^{10}f_1^6}{f^{62}}. $$
At $ v_5 $, Lemma \ref{lem:negative} gives
$$ v_5(a_1) = 0, \qquad
\begin{aligned}
v_5(a_2) & = -2v_5(f) > 0, \\
v_5(a_3) & = -4v_5(f) > 0,
\end{aligned}
\qquad v_5(\Delta) = -10v_5(f). $$
By Lemma \ref{lem:odd}, $ c_{v_5}(E) = -10v_5(f) $, which gives another factor $ 10 \mid c(E) $.
\end{proof}

\pagebreak

\begin{example}
If $ \F_{25} := \F_5[\alpha] / (\alpha^2 + 4\alpha + 2) $, then the elliptic curve $ E / \F_{25}(t) $ given by the parameterisation in Proposition \ref{prop:c10c2} with $ f = ((2\alpha + 2)t^2 + \alpha) / (t^2 + 4\alpha + 1) $ has $ c(E) = 10^6 $.
\end{example}

\begin{proposition}
\label{prop:c10c5}
Let $ k $ be a finite field of characteristic $ 2 $, and let $ E / k(t) $ be a non-isotrivial elliptic curve with $ E(k(t))_{\tors} \cong \C_{10} \oplus \C_5 $. Then $ 10^{12} $ divides $ c(E) $.
\end{proposition}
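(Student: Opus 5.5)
We follow the pattern of Propositions \ref{prop:c5c5} and \ref{prop:c10c2}. Since $ p = 2 \nmid 5 $ and $ E[5] \subseteq E(k(t)) $, Lemma \ref{lem:noncyclic} gives $ \zeta_5 \in k $. By Theorem \ref{thm:torsion}, $ E \cong \TT(a, b) $ with
$$ a = \dfrac{f(f^4 + f + 1)(f^4 + f^3 + 1)}{(f^2 + f + 1)^5}, \qquad b = \dfrac{af^2(f^4 + f^3 + 1)^2}{(f^2 + f + 1)^5} $$
for some $ f \in k(t) \setminus k $. We compute $ \Delta $ and factor it over $ k $ using $ \zeta_5 $. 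In characteristic $ 2 $, $ f^4 + f + 1 $ and $ f^4 + f^3 + 1 $ are irreducible over $ \F_2 $ with roots of order $ 15 $; over $ k \ni \zeta_5 $, hence $ \F_{16} \subseteq k $, they split into linear factors $ f - \alpha $. We therefore expect $ \Delta $, up to a unit, to be a product of powers of linear factors $ f - \beta $ for $ \beta $ ranging over a set $ S \subseteq \F_{16} \cup \{\infty\} $. The $ 5 $-torsion should force $ \Delta $ to have tenth-power structure at twelve such points, and $ 12 $ is the number of cusps of $ X(5) $, which has no cusps of width $ 2 $.

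The plan is to find twelve distinct places $ v_\beta $, one for each of twelve points $ \beta $. For finite $ \beta $ we take $ v_\beta(f - \beta) > 0 $, and for $ \beta = \infty $ we take $ v_\infty(f) < 0 $. Distinctness follows because distinct constants give $ v(\beta - \beta') = 0 $. At each place, possibly after a change of variables $ (x, y) \mapsto (u^2x, u^3y) $ that clears the denominators $ (f^2 + f + 1)^5 $ or powers of $ f $, as in Propositions \ref{prop:c14} and \ref{prop:c18}, we check that $ v(a_1) = 0 $, that $ v(a_2), v(a_3) > 0 $, and that $ v(\Delta) = 10v(f - \beta) $ or $ -10v(f) $. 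Lemma \ref{lem:odd} then gives $ c_v(E) = 10v(\cdot) $.

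When $ a_2 $ or $ a_3 $ fails to vanish but $ v(c_4) = 4v(a_1) = 0 $ still holds, we instead apply Lemma \ref{lem:noncyclic} with $ n = 5 $. That lemma gives split multiplicative reduction and $ c_v(E) = v(\Delta) $. To handle the roots of the quartics, which may also divide $ a_1 $ or coincide with roots of $ f^2 + f + 1 $, we use Lemma \ref{lem:identity} with explicit polynomial identities. These confirm that the other factors are units at each $ v_\beta $; in particular $ a_1 = 1 - a $ is a unit where $ v(a) > 0 $.

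The main obstacle is bookkeeping: determining for each of the twelve points which model is integral with $ v(a_1) = 0 $ and exactly what power of $ f - \beta $ divides $ \Delta $. We need to confirm the exponent is $ 10 $ everywhere, including at the roots of $ f^2 + f + 1 $ (namely $ \zeta_3 $ and $ \zeta_3^2 $, which lie in $ \F_{16} $) and at $ \infty $, where the Tate normal form has poles. I would first compute $ \Delta $ symbolically and read off the exponents. If some cusp gives exponent $ 5 $ rather than $ 10 $, I would look for compensation at another place. Failing that, the twelve tenfold factors should be recovered from the $ \C_{10} $ point together with the full $ 5 $-torsion, via Lemma \ref{lem:noncyclic} and the evenness in Lemma \ref{lem:even}.
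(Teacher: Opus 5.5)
Your strategy is the same as the paper's, and it does succeed. As written, though, it stops exactly where the proof begins. You never compute $a_1$ or $\Delta$, never name the twelve points, and leave open whether the exponents are $10$, and that computation is the whole content of the paper's proof.

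In characteristic $2$ one has $(f^2+f+1)^5 + f(f^4+f+1)(f^4+f^3+1) = f^{10}+1$, so $a_1 = 1-a = (f^5+1)^2/(f^2+f+1)^5$. Also
\[ \Delta = \frac{f^{10}(f^4+f^3+1)^{10}(f^4+f+1)^{10}}{(f^2+f+1)^{50}}, \]
where $f^4+f^3+1=\prod_i(f+1+\zeta_5^i)$, $f^4+f+1=\prod_i(f+\zeta_5^i+\zeta_5^{2i})$ and $f^2+f+1=\prod_{i=1}^2(f+\zeta_5^i+\zeta_5^{-i})$.

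So the twelve points are $0$, $\infty$, the eight roots of the quartics, and the two roots of $f^2+f+1$. These are all distinct in $\F_{16}\cup\{\infty\}$, so Lemma \ref{lem:identity} is not needed. Contrary to your worry, no root of a quartic kills $a_1$: these roots have order $15$, so they are not fifth roots of unity.

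At the first ten points, the Tate normal form itself is integral with $v(a_1)=0$ and $v(a_2)=v(a_3)>0$. Moreover $v(\Delta)=10v(f+\beta)$, or $-10v(f)$ at $\infty$, so Lemma \ref{lem:odd} applies.

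At the two roots of $f^2+f+1$, scale by $u=(f^2+f+1)^5$. This gives $a_1=(f^5+1)^2$ and $a_2=-f^3(f^4+f+1)(f^4+f^3+1)^3$, and both are units there. So Lemma \ref{lem:odd} fails, and one uses Lemma \ref{lem:noncyclic} with $v(c_4)=4v(a_1)=0$ and $v(\Delta)=10v(f^2+f+1)$. This is precisely your second case, so your anticipation was right, but it has to be verified rather than expected.

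The fallback in your last paragraph is not an argument as stated. Lemma \ref{lem:even} only gives parity, and Lemma \ref{lem:noncyclic} only gives $c_v(E)=v(\Delta)$, so neither produces a factor $5$. Your ``compensation at another place'' has no mechanism behind it.

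The fallback can be repaired by a component group argument:
\begin{enumerate}
\item At a split multiplicative place, $E_0(K_v)[5]$ embeds in the cyclic group $k_v^\times$. Hence $E[5]$ has image of order $5$ in $E(K_v)/E_0(K_v)\cong\Z/c_v(E)$.
\item In characteristic $2$, $E_0(K_v)$ has no $2$-torsion. Indeed $|k_v^\times|$ is odd, and $E_1(K_v)$, which is the principal units by Tate's uniformisation, has none. So the rational point of order $2$ has non-zero image.
\end{enumerate}
This gives $10\mid c_v(E)$ at every multiplicative place. You would still need twelve distinct such places, which again comes down to the explicit $a_1$ and $\Delta$ above.
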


\begin{proof}
By Lemma \ref{lem:noncyclic}, $ \zeta_5 \in k $. By Theorem \ref{thm:torsion}, $ E $ is given by the Weierstrass equation
$$
\begin{aligned}
& y^2 + \dfrac{\prod_{i = 0}^4(f + \zeta_5^i)^2}{\prod_{i = 1}^2(f + \zeta_5^i + \zeta_5^{-i})^5}xy - \dfrac{f^3\prod_{i = 1}^4(f + 1 + \zeta_5^i)^3(f + \zeta_5^i + \zeta_5^{2i})}{\prod_{i = 1}^2(f + \zeta_5^i + \zeta_5^{-i})^{10}}y \\
& \qquad = x^3 - \dfrac{f^3\prod_{i = 1}^4(f + 1 + \zeta_5^i)^3(f + \zeta_5^i + \zeta_5^{2i})}{\prod_{i = 1}^2(f + \zeta_5^i + \zeta_5^{-i})^{10}}x^2,
\end{aligned}
$$
for some $ f \in k(t) \setminus k $, with
$$ \Delta = \dfrac{f^{10}\prod_{i = 1}^4(f + 1 + \zeta_5^i)^{10}(f + \zeta_5^i + \zeta_5^{2i})^{10}}{\prod_{i = 1}^2(f + \zeta_5^i + \zeta_5^{-i})^{50}}. $$
There are distinct places $ v_0, \dots, v_5 $ and $ v_{i, j} $ such that
$$
\begin{aligned}
v_0(f) & > 0, & v_i(f + 1 + \zeta_5^i) & > 0, & 1 \le i \le 4, \\
v_5(f) & < 0, & v_{i, j}(f + \zeta_5^i + \zeta_5^j) & > 0, & 1 \le i < j \le 4.
\end{aligned}
$$
At $ v_0 $,
$$ v_0(a_1) = 0, \qquad v_0(a_2) = v_0(a_3) = 3v_0(f) > 0, \qquad v_0(\Delta) = 10v_0(f). $$
At $ v_5 $, Lemma \ref{lem:negative} gives
$$ v_5(a_1) = 0, \qquad v_5(a_2) = v_5(a_3) = -v_5(f) > 0, \qquad v_5(\Delta) = -10v_5(f). $$
At $ v_i $ for $ 1 \le i \le 4 $,
$$
\begin{aligned}
v_i(a_1) & = 0, & v_i(a_2) & = v_i(a_3) = 3v_i(f + 1 + \zeta_5^i) > 0, \\
& & v_i(\Delta) & = 10v_i(f + 1 + \zeta_5^i).
\end{aligned}
$$
At $ v_{i, j} $ for $ 1 \le i < j \le 4 $ with $ i + j \ne 5 $,
$$
\begin{aligned}
v_{i, j}(a_1) & = 0, & v_{i, j}(a_2) & = v_{i, j}(a_3) = v_{i, j}(f + \zeta_5^i + \zeta_5^j) > 0, \\
& & v_{i, j}(\Delta) & = 10v_{i, j}(f + \zeta_5^i + \zeta_5^j).
\end{aligned}
$$
By Lemma \ref{lem:odd},
$$
\begin{aligned}
c_{v_0}(E) & = 10v_0(f), & c_{v_5}(E) & = -10v_5(f), \\
c_{v_i}(E) & = 10v_i(f + 1 + \zeta_5^i), & & 1 \le i \le 4, \\
c_{v_{i, j}}(E) & = 10v_{i, j}(f + \zeta_5^i + \zeta_5^j), & & 1 \le i < j \le 4, \ i + j \ne 5,
\end{aligned}
$$

\pagebreak

\noindent which give ten factors $ 10 \mid c(E) $. Now the change of variables
$$ (x, y) \mapsto \left(x\prod_{i = 1}^2(f + \zeta_5^i + \zeta_5^{-i})^{10}, y\prod_{i = 1}^2(f + \zeta_5^i + \zeta_5^{-i})^{15}\right) $$
transforms $ E $ to the Weierstrass equation
$$
\begin{aligned}
& y^2 + \prod_{i = 0}^4(f + \zeta_5^i)^2xy - f^3\prod_{i = 1}^4(f + 1 + \zeta_5^i)^3(f + \zeta_5^i + \zeta_5^{2i})\prod_{i = 1}^2(f + \zeta_5^i + \zeta_5^{-i})^5y \\
& \qquad = x^3 - f^3\prod_{i = 1}^4(f + 1 + \zeta_5^i)^3(f + \zeta_5^i + \zeta_5^{2i})x^2,
\end{aligned}
$$
with
$$ \Delta = f^{10}\prod_{i = 1}^4(f + 1 + \zeta_5^i)^{10}\prod_{1 \le i < j \le 4}(f + \zeta_5^i + \zeta_5^j)^{10}. $$
At $ v_{i, j} $ for $ 1 \le i < j \le 4 $ with $ i + j = 5 $,
$$ v_{i, j}(c_4) = 4v_{i, j}(a_1) = v_{i, j}((\zeta_5^i + \zeta_5^j)^8) = 0, \qquad v_{i, j}(\Delta) = 10v_{i, j}(f + \zeta_5^i + \zeta_5^j), $$
so by Lemma \ref{lem:noncyclic}, $ c_{v_{i, j}}(E) = 10v_{i, j}(f + \zeta_5^i + \zeta_5^j) $, which give two more factors $ 10 \mid c(E) $.
\end{proof}

\begin{example}
If $ \F_{16} := \F_2[\alpha] / (\alpha^4 + \alpha + 1) $, then the elliptic curve $ E / \F_{16}(t) $ given by the parameterisation in Proposition \ref{prop:c10c5} with $ f = t $ has $ c(E) = 10^{12} $.
\end{example}

\begin{proposition}
\label{prop:c12c2}
Let $ k $ be a finite field of characteristic $ p $, and let $ E / k(t) $ be a non-isotrivial elliptic curve with $ E(k(t))_{\tors} \cong \C_{12} \oplus \C_2 $. Then $ 6^2 \cdot 12^4 $ divides $ c(E) $.
\end{proposition}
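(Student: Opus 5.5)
By Proposition \ref{prop:mcdonald}, $ p = 3 $ and $ E \cong \TT(a, b) $ with $ a, b $ the explicit rational functions of some $ f \in k(t) \setminus k $ given there. I would follow the same pattern as the other cases.

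\textbf{Step 1: the discriminant.} Write down the Tate normal form and compute $ \Delta $ as a product of powers of the irreducible factors over $ \F_3 $ appearing in the parameterisation. These are $ f $, $ f + 1 $, $ f + 2 $, $ f^2 + 1 $, $ f^2 + f + 2 $, $ f^2 + 2f + 2 $, together with whatever extra factor comes from the $ c_4 $-part. Since the curve has a point of order $ 12 $ and full $ 2 $-torsion, I expect four factors with exponent $ 12 $ in $ \Delta $ (up to scaling). These are the places where the order-$ 12 $ point reduces into a component group $ \C_{12} $. I also expect factors with exponents $ 6 $, corresponding to places of type $ \I_6 $ or $ \I_{12} $ coming from the other $ 2 $-torsion.

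\textbf{Step 2: places and Lemma \ref{lem:odd}.} For each factor $ G $, Lemma \ref{lem:negative} gives a place $ v $ with $ v(G(f)) > 0 $, or $ v(f) < 0 $ when the factor is the pole of $ f $. At each place I rescale with $ (x, y) \mapsto (u^2x, u^3y) $ for a suitable power $ u $ of the relevant factor, as in Proposition \ref{prop:c12}. The aim is an integral model with $ v(a_1) = 0 $ and $ v(a_2), v(a_3) > 0 $. Lemma \ref{lem:odd} then gives $ c_v(E) = v(\Delta) $, producing the four factors $ 12 $. For the non-linear factors such as $ f^2 + 1 $, I would use Lemma \ref{lem:identity} with explicit polynomial identities to show that the other factors are units there.

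\textbf{Step 3: the two factors of $ 6 $.} At the remaining places I expect $ v(a_1) \ne 0 $ or $ a_2 $ to be non-vanishing, so Lemma \ref{lem:odd} may fail. In that case I would check that $ v(c_4) = 0 $ and that $ v(\Delta) $ is a multiple of $ 6 $. Lemma \ref{lem:noncyclic} is unavailable because $ p = 3 $ and we only have $ E[2] $ rational, and $ n = 2 $ is excluded there. So split multiplicative reduction has to be shown directly: the polynomial $ T^2 + a_1T - a_2 $ should split because a $ 2 $-torsion point or the $ 12 $-torsion point reduces to a singular point whose tangent slopes are rational. The cleaner route is to translate a suitable rational torsion point to $ (0, 0) $ and then apply Lemma \ref{lem:odd} directly.

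\textbf{Main obstacle.} The hard part is Step 3 together with the bookkeeping. We need to identify exactly which factors give the two factors $ 6 $, and make sure the resulting places are distinct from the four giving $ 12 $. We also need to confirm split reduction in characteristic $ 3 $, where $ c_4 $ may degenerate. If a factor is additive, so that $ v(c_4) > 0 $, I would run Tate's algorithm further there. I expect instead that Proposition \ref{prop:mcdonald} forces $ f $ to produce six multiplicative places of types $ \I_{12} $ (four) and $ \I_6 $ (two).
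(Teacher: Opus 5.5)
Your plan is essentially the paper's proof. There one has $\Delta = f^{12}(f+1)^{12}(f^2+1)^6(f^2+f+2)^6/(f+2)^{24}$, and Lemma \ref{lem:odd} gives the four factors $12$: directly at a zero of $f$ and at a zero of $f+1$, and after the single rescaling $(x, y) \mapsto ((f+2)^6x/(f^2+1)^6, (f+2)^9y/(f^2+1)^9)$ at a zero of $f+2$ and at a pole of $f$. Your Step 3 fallback is not needed: at zeros of $f^2+1$ and of $f^2+f+2$ the Tate normal form already has $v(a_2) = v(a_3) > 0$, and Lemma \ref{lem:identity} gives $v(a_1) = 0$, so Lemma \ref{lem:odd} applies directly and gives the two factors $6$ (also, $f^2+2f+2$ does not occur in $\Delta$ at all).
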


\begin{proof}
By Theorem \ref{thm:torsion}, $ p = 3 $. By Proposition \ref{prop:mcdonald}, $ E $ is given by the Weierstrass equation
$$
\begin{aligned}
& y^2 + \left(1 - \dfrac{f(f + 1)(f^2 + 1)f_2}{(f + 2)^3}\right)xy - \dfrac{f(f + 1)(f^2 + 1)f_1^2}{(f + 2)^4}y \\
& \qquad = x^3 - \dfrac{f(f + 1)(f^2 + 1)f_1^2}{(f + 2)^4}x^2,
\end{aligned}
$$
for some $ f \in k(t) \setminus k $, with
$$ \Delta = \dfrac{f^{12}(f + 1)^{12}(f^2 + 1)^6f_1^6}{(f + 2)^{24}}, $$
where $ f_1 := f^2 + f + 2 $ and $ f_2 := f^2 + 2f + 2 $. Since $ f^2 + 1, f_1 \notin k $ by Lemma \ref{lem:negative}, there are distinct places $ v_0, \dots, v_5 $ such that
$$
\begin{aligned}
v_0(f) & > 0, & v_1(f + 1) & > 0, & v_2(f + 2) & > 0, \\
v_3(f^2 + 1) & > 0, & v_4(f_1) & > 0, & v_5(f) & < 0.
\end{aligned}
$$

\pagebreak

\noindent At $ v_0 $ and $ v_1 $,
$$
\begin{aligned}
v_0(a_1) & = 0, & v_0(a_2) & = v_0(a_3) = v_0(f) > 0, & v_0(\Delta) & = 12v_0(f), \\
v_1(a_1) & = 0, & v_1(a_2) & = v_1(a_3) = v_1(f + 1) > 0, & v_1(\Delta) & = 12v_1(f + 1).
\end{aligned}
$$
At $ v_3 $ and $ v_4 $, Lemma \ref{lem:identity} \footnote{$ f_1 = (f^2 + 1) + (f + 1) $, $ f^2 + 1 = f_1 - (f + 1) $, and $ (f + 2)^3a_1 = -(f^4 + 2f^3 + f^2 + 2f)f_1 - (f + 1) $} gives
$$
\begin{aligned}
& v_3(f) = v_3(f + 1) = v_3(f + 2) = v_3(f_1) = 0, \\
& v_4(f) = v_4(f + 1) = v_4(f + 2) = v_4(f^2 + 1) = v_4((f + 2)^3a_1) = 0,
\end{aligned}
$$
so
$$
\begin{aligned}
v_3(a_1) & = 0, & v_3(a_2) & = v_3(a_3) = v_3(f^2 + 1) > 0, & v_3(\Delta) & = 6v_3(f^2 + 1), \\
v_4(a_1) & = 0, & v_4(a_2) & = v_4(a_3) = 2v_4(f_1) > 0, & v_4(\Delta) & = 6v_4(f_1).
\end{aligned}
$$
By Lemma \ref{lem:odd},
$$
\begin{aligned}
c_{v_0}(E) & = 12v_0(f), & c_{v_1}(E) & = 12v_1(f + 1), \\
c_{v_3}(E) & = 6v_3(f^2 + 1), & c_{v_4}(E) & = 6v_4(f_1),
\end{aligned}
$$
which give two factors $ 12 \mid c(E) $ and two factors $ 6 \mid c(E) $. Now the change of variables
$$ (x, y) \mapsto \left(\dfrac{(f + 2)^6}{(f^2 + 1)^6}x, \dfrac{(f + 2)^9}{(f^2 + 1)^9}y\right) $$
transforms $ E $ to the Weierstrass equation
$$
\begin{aligned}
& y^2 + \dfrac{(f + 2)^3 - f(f + 1)(f^2 + 1)f_2}{(f^2 + 1)^3}xy - \dfrac{f(f + 1)(f + 2)^5f_1^2}{(f^2 + 1)^8}y \\
& \qquad = x^3 - \dfrac{f(f + 1)(f + 2)^2f_1^2}{(f^2 + 1)^5}x^2,
\end{aligned}
$$
with
$$ \Delta = \dfrac{f^{12}(f + 1)^{12}(f + 2)^{12}f_1^6}{(f^2 + 1)^{30}}. $$
At $ v_2 $,
$$ v_2(a_1) = 0, \qquad
\begin{aligned}
v_2(a_2) & = 2v_2(f + 2) > 0, \\
v_2(a_3) & = 5v_2(f + 2) > 0,
\end{aligned}
\qquad v_2(\Delta) = 12v_2(f + 2). $$
At $ v_5 $, Lemma \ref{lem:negative} gives
$$ v_5(a_1) = 0, \qquad
\begin{aligned}
v_5(a_2) & = -2v_5(f) > 0, \\
v_5(a_3) & = -5v_5(f) > 0,
\end{aligned}
\qquad v_5(\Delta) = -12v_5(f). $$
By Lemma \ref{lem:odd}, $ c_{v_2}(E) = 12v_2(f + 2) $ and $ c_{v_5}(E) = -12v_5(f) $, which give two more factors $ 12 \mid c(E) $.
\end{proof}

\begin{example}
The elliptic curve $ E / \F_3(t) $ given by the parameterisation in Proposition \ref{prop:c12c2} with $ f = t $ has $ c(E) = 6^2 \cdot 12^4 $.
\end{example}

\pagebreak

\section{Exceptional torsion}
\label{sec:exceptional}

\begin{lemma}
\label{lem:exceptional}
Let $ F $ be a complete discretely valued field with finite residue field and normalised valuation $ v $, and let $ E / F $ be an elliptic curve given by an integral Weierstrass equation with
$$ v(a_1) \ge 1, \qquad v(a_2) = 1, \qquad v(a_3) \ge 2, \qquad v(a_4) \ge 3, \qquad v(a_6) \ge 4. $$
Then the equation is minimal with additive reduction of type $ \I_n^* $ for some $ n > 0 $, and $ c_v(E) $ is even.
\end{lemma}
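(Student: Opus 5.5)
The plan is to run Tate's algorithm, in Silverman's formulation \cite[Section IV.9]{Sil94}, step by step on the given integral Weierstrass equation, using only the valuation hypotheses. Let $ \pi $ be a uniformiser. First compute the standard invariants:
$$ b_2 = a_1^2 + 4a_2, \quad b_4 = a_1a_3 + 2a_4, \quad b_6 = a_3^2 + 4a_6, \quad b_8 = a_1^2a_6 + 4a_2a_6 - a_1a_3a_4 + a_2a_3^2 - a_4^2. $$
The hypotheses give $ v(b_2) \ge 1 $, $ v(b_4) \ge 3 $, $ v(b_6) \ge 4 $ and $ v(b_8) \ge 5 $. Hence $ v(\Delta) > 0 $ and $ v(c_4) > 0 $, so the reduction is additive.

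Next pass through steps 1 to 5 of the algorithm. The singular point of the reduction is already at $ (0, 0) $, because $ \pi \mid a_3, a_4, a_6 $. Step 3 does not stop because $ \pi^2 \mid a_6 $. Step 4 does not stop because $ \pi^3 \mid b_8 $, and step 5 does not stop because $ \pi^3 \mid b_6 $. The conditions $ \pi \mid a_1, a_2 $, $ \pi^2 \mid a_3, a_4 $ and $ \pi^3 \mid a_6 $ required to enter step 6 all hold. This means no change of coordinates is needed before step 6.

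In step 6 we look at the cubic
$$ P(T) = T^3 + a_{2,1}T^2 + a_{4,2}T + a_{6,3}, $$
where $ a_{i,j} := a_i / \pi^j $. Since $ v(a_4) \ge 3 $ and $ v(a_6) \ge 4 $, its reduction is $ T^3 + \bar a_{2,1}T^2 = T^2(T + \bar a_{2,1}) $ with $ \bar a_{2,1} \ne 0 $. So $ P $ has exactly one simple root $ -\bar a_{2,1} $ and one double root at $ T = 0 $. This is precisely the $ \I_n^* $ branch, and the double root is already at $ 0 $, so no translation is required.

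The algorithm then proceeds through the subprocedure with the quadratics $ Y^2 + a_{3,2}Y - a_{6,4} $, $ a_{2,1}X^2 + a_{4,3}X + a_{6,5} $, and so on. It ends with some $ n \ge 1 $ and type $ \I_n^* $. Reaching step 6 in this way also gives minimality, since the algorithm only declares the equation non-minimal at step 11. Finally, for type $ \I_n^* $ the Tamagawa number is $ c_v = 2 $ or $ 4 $ according to the splitting of the final quadratic in the subprocedure \cite[Page 366]{Sil94}. In either case $ c_v(E) $ is even, which proves the claim.

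The main obstacle is to make sure that $ n > 0 $, i.e. that we do not land on type $ \I_0^* $, and that the loop actually terminates. The first point is handled by the structure of $ \bar P $ computed above. For the second, termination follows because $ v(\Delta) $ is finite; equivalently, $ v(\Delta) = 6 + n $. This also explains where the evenness of $ c_v(E) $ comes from: the simple root $ -\bar a_{2,1} $ of $ \bar P $ lies in the residue field, which contributes the factor $ 2 $. That rational simple root is the key input in the $ \I_n^* $ formula giving $ c_v \in \{2, 4\} $.
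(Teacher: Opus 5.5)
Your proposal is correct and follows essentially the same route as the paper. You run Tate's algorithm to step 6, note that the reduced cubic is $T^2(T + \bar a_{2,1})$ with $\bar a_{2,1} \ne 0$, conclude that the algorithm stops at step 7 with type $\I_n^*$ for some $n \ge 1$ (which gives minimality), and use $c_v(E) \in \{2, 4\}$. One small caveat concerns your aside that $v(\Delta) = 6 + n$: this only holds when the residue characteristic is not $2$, since Ogg's formula gives $v(\Delta) = f + n + 4$ and wild ramification can make $f > 2$. This matters because the paper applies the lemma in characteristic $2$, but the remark is not needed, since termination is part of the cited correctness of Tate's algorithm.
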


\begin{proof}
Let $ \pi $ be a uniformiser of $ F $. The valuation conditions give
$$ v(b_6) \ge 4, \qquad v(b_8) \ge 5, \qquad v(c_4) \ge 2, \qquad v(\Delta) \ge 7, $$
so Tate's algorithm reaches step $ 6 $. Since the polynomial $ T^3 + (a_2 / \pi)T^2 + (a_4 / \pi^2)T + a_6 / \pi^3 $ splits to $ T^2(T + a_2 / \pi) $ in the residue field, Tate's algorithm terminates at step $ 7 $, so the reduction is additive of type $ \I_n^* $ for some $ n > 0 $ and $ c_v(E) $ is even \cite[Page 367]{Sil94}.
\end{proof}

\begin{proposition}
\label{prop:c2}
Let $ k $ be a finite field of characteristic $ p $, and let $ E / k(t) $ be a non-isotrivial elliptic curve with $ E(k(t))_{\tors} \cong \C_2 $. Then $ 2 $ divides $ c(E) $.
\end{proposition}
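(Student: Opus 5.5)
We split into the two parameterisations of Theorem~\ref{thm:torsion}. In each case we locate a single place $ v $ where $ c_v(E) $ is even. Lemma~\ref{lem:even} handles the places with multiplicative reduction and an even discriminant valuation, and Lemma~\ref{lem:exceptional} handles the twisted places of type $ \I_n^* $.

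\emph{Case $ p \ne 2 $.} Write $ E : y^2 = x^3 + Ax^2 + Bx $, so that $ \Delta = 16B^2(A^2 - 4B) $, $ c_4 = 16(A^2 - 3B) $ and
$$ j = \dfrac{256(A^2 - 3B)^3}{B^2(A^2 - 4B)}. $$
If $ A = 0 $ then $ j = 1728 $, so non-isotriviality forces $ A \ne 0 $. Then $ u := B / A^2 $ satisfies $ j = 256(1 - 3u)^3 / (u^2(1 - 4u)) $, hence $ u \notin k $, and there is a place $ v $ with $ v(u) > 0 $. Now $ E $ is the quadratic twist by $ A $ of $ E' : y^2 = x^3 + x^2 + ux $, which at $ v $ has $ v(c_4) = 0 $ and $ v(\Delta) = 2v(u) $. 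Choose a uniformiser $ \pi $ at $ v $.
\begin{itemize}
\item If $ v(A) = 2m $ is even, scale by a suitable power of $ \pi $ to obtain $ y^2 = x^3 + (A / \pi^{2m})x^2 + (B / \pi^{4m})x $. This model has $ v(c_4) = 0 $ and $ v(\Delta) = 2v(u) > 0 $, so Lemma~\ref{lem:even} gives $ 2 \mid c_v(E) $.
\item If $ v(A) = 2m + 1 $ is odd, the analogous scaling gives $ y^2 = x^3 + a_2x^2 + a_4x $ with $ v(a_2) = 1 $ and $ v(a_4) = 2 + v(u) \ge 3 $. Lemma~\ref{lem:exceptional} then gives type $ \I_n^* $ and $ 2 \mid c_v(E) $.
\end{itemize}

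\emph{Case $ p = 2 $.} Write $ E : y^2 + xy = x^3 + Ax^2 + Bx $. Here $ b_2 = 1 $, $ c_4 = 1 $, $ \Delta = B^2 $ and $ j = 1 / B^2 $, so non-isotriviality gives $ B \notin k $. Hence there is a place $ v $ with $ v(B) > 0 $, and $ v(\Delta) = 2v(B) $ is even and positive. The only obstruction to applying Lemma~\ref{lem:even} directly is a pole of $ A $ at $ v $.

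We first use the substitution $ y \mapsto y + sx $, which replaces $ A $ by $ A + s^2 + s $. Taking $ s $ in the local field $ k(t)_v $, we kill all even-order polar terms of $ A $ in its Laurent expansion at $ v $, an Artin--Schreier reduction. (A global $ s $ is not needed, since $ c_v(E) $ is a local invariant.)
\begin{itemize}
\item If $ A $ becomes integral at $ v $, the model is integral with $ v(c_4) = 0 $ and $ v(\Delta) $ even, so Lemma~\ref{lem:even} applies.
\item Otherwise $ v(A) = -m $ with $ m $ odd, and $ E $ is locally a wildly ramified Artin--Schreier twist of $ y^2 + xy = x^3 + Bx $.
\end{itemize}
In the second sub-case we will rescale $ (x, y) \mapsto (\pi^{-2e}x, \pi^{-3e}y) $ for a suitable $ e $ and complete the square-like shifts of $ x $ and $ y $, aiming for an integral model with the valuation pattern $ v(a_1) \ge 1 $, $ v(a_2) = 1 $, $ v(a_3) \ge 2 $, $ v(a_4) \ge 3 $, $ v(a_6) \ge 4 $ of Lemma~\ref{lem:exceptional}. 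If this pattern cannot be reached, we fall back on running Tate's algorithm directly and checking that the resulting Kodaira type has even $ c_v $.

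This last sub-case is the main obstacle: it requires a careful local analysis of the twisted model in characteristic $ 2 $. If it proves unwieldy, we will instead try a different place, for instance one with $ v(B) < 0 $ after replacing $ E $ by the isogenous curve $ E / \langle (0, 0) \rangle $. The point is that the $ 2 $-isogeny swaps the roles of zeros and poles of $ B $.
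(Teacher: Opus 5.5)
Your proposal follows the paper's proof essentially step for step. It uses the same $j$-invariant argument to find a place with $v(B/A^2) > 0$ (resp.\ $v(B) > 0$ when $p = 2$), and the same parity split on $v(A)$ with rescaling by powers of $\pi$, feeding into Lemma~\ref{lem:even} or Lemma~\ref{lem:exceptional}. In characteristic $2$ it uses the same Artin--Schreier substitution $y \mapsto y + sx$. The paper picks $S \in k(t)$ globally to maximise $\min(v(S^2 + S + A), 0)$, which is equivalent to your local choice. Your case $p \ne 2$ is complete and correct.

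The one real gap is the characteristic $2$ sub-case with $v(A) = -m$ and $m$ odd, which you leave as an aim with fallbacks. It is not an obstacle. Put $e := (m + 1)/2 \ge 1$ and substitute $x = \pi^{-2e}x'$, $y = \pi^{-3e}y'$, which gives
$$ y'^2 + \pi^e x'y' = x'^3 + \pi^{2e}Ax'^2 + \pi^{4e}Bx'. $$
This model has $v(a_1) = e \ge 1$, $v(a_2) = 2e - m = 1$, $v(a_4) = 4e + v(B) \ge 5$ and $a_3 = a_6 = 0$. So Lemma~\ref{lem:exceptional} applies directly, with no further shifts of $x$ or $y$, and $2 \mid c_v(E)$. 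This is precisely the paper's rescaling, whose exponent $n = \lfloor \min(v(S^2 + S + A), 0)/2 \rfloor$ equals $-e$.

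Your proposed fallback through $E / \langle (0, 0) \rangle$ would not have worked as stated. Local Tamagawa numbers are not isogeny invariants, so information about the isogenous curve at some place does not transfer to $c_v(E)$.
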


\begin{proof}
Let $ p \ne 2 $. By Theorem \ref{thm:torsion}, $ E $ is given by the Weierstrass equation
$$ y^2 = x^3 + Ax^2 + Bx, \qquad A, B \in k(t). $$
Then $ A \ne 0 $ and $ B / A^2 \notin k $, since
$$ j = \dfrac{256(A^2 - 3B)^3}{B^2(A^2 - 4B)} = \dfrac{256(3B / A^2 - 1)^3}{(B / A^2)^2(4B / A^2 - 1)} \notin k, $$
so there is a place $ v $ such that $ v(B) > 2v(A) $. Let $ \pi $ be a uniformiser at $ v $, and let $ n := \lfloor v(A) / 2\rfloor $, so that $ v(A) - 2n \in \{0, 1\} $. Now the change of variables $ (x, y) \mapsto (x / \pi^{2n}, y / \pi^{3n}) $ transforms $ E $ to the Weierstrass equation
$$ y^2 = x^3 + \dfrac{A}{\pi^{2n}}x^2 + \dfrac{B}{\pi^{4n}}x, $$
with
$$ c_4 = \dfrac{16(A^2 - 3B)}{\pi^{4n}}, \qquad \Delta = \dfrac{16B^2(A^2 - 4B)}{\pi^{12n}}. $$
If $ v(A) = 2n $, then
$$ v(c_4) = 2v(A) - 4n = 0, \qquad v(\Delta) = 2(v(A) + v(B) - 6n), $$
so Lemma \ref{lem:even} gives $ 2 \mid c_v(E) $, and hence a factor $ 2 \mid c(E) $. If $ v(A) = 2n + 1 $, then
$$ v(a_2) = v(A) - 2n = 1, \qquad v(a_4) = v(B) - 4n > 2v(A) - 4n = 2, $$
so Lemma \ref{lem:exceptional} gives $ 2 \mid c_v(E) $, and hence a factor $ 2 \mid c(E) $.

\pagebreak

Now let $ p = 2 $. By Theorem \ref{thm:torsion}, $ E $ is given by the Weierstrass equation
$$ y^2 + xy = x^3 + Ax^2 + Bx, \qquad A, B \in k(t), $$
with $ c_4 = 1 $ and $ \Delta = B^2 $. Then $ B \notin k $, since $ j = 1 / B^2 \notin k $, so there is a place $ v $ such that $ v(B) > 0 $. Choose $ S \in k(t) $ to maximise
$$ m := \min(v(S^2 + S + A), 0) \le 0, $$
let $ n := \lfloor m / 2\rfloor \le 0 $, and let $ \pi $ be a uniformiser at $ v $. The change of variables $ (x, y) \mapsto (x / \pi^{2n}, (y + Sx) / \pi^{3n}) $ transforms $ E $ to the Weierstrass equation
$$ y^2 + \dfrac{1}{\pi^n}xy = x^3 + \dfrac{S^2 + S + A}{\pi^{2n}}x^2 + \dfrac{B}{\pi^{4n}}x, $$
with $ c_4 = 1 / \pi^{4n} $ and $ \Delta = B^2 / \pi^{12n} $. If $ n = 0 $, then
$$ v(c_4) = 0, \qquad v(\Delta) = 2v(B), $$
so Lemma \ref{lem:even} gives $ 2 \mid c_v(E) $, and hence a factor $ 2 \mid c(E) $. If $ n \le -1 $, then $ m < 0 $ is odd by the maximality of $ m $, and
$$ v(a_1) = -n \ge 1, \qquad v(a_2) = m - 2n = 1, \qquad v(a_4) = v(B) - 4n \ge 5, $$
so Lemma \ref{lem:exceptional} gives $ 2 \mid c_v(E) $, and hence a factor $ 2 \mid c(E) $.
\end{proof}

\begin{example}
The elliptic curve $ E / \F_3(t) $ given by its parameterisation in Proposition \ref{prop:c2} with $ A = 2t^2 $ and $ B = 1 $ has $ c(E) = 2 $. The elliptic curve $ E / \F_2(t) $ given by its parameterisation in Proposition \ref{prop:c2} with $ A = 0 $ and $ B = t^4 + t^3 + 1 $ also has $ c(E) = 2 $.
\end{example}

\begin{proposition}
\label{prop:c4}
Let $ k $ be a finite field of characteristic $ p $, and let $ E / k(t) $ be a non-isotrivial elliptic curve with $ E(k(t))_{\tors} \cong \C_4 $.
\begin{enumerate}
\item If $ p \ne 2 $, then $ 2^3 $ divides $ c(E) $.
\item If $ p = 2 $, then $ 2^2 $ divides $ c(E) $.
\end{enumerate}
\end{proposition}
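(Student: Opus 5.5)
The plan is to follow the pattern of Propositions \ref{prop:c8} and \ref{prop:c2}: use the $ \C_4 $ parameterisation from Theorem \ref{thm:torsion}, get a factor $ 4 $ at a zero of $ f $ from Lemma \ref{lem:odd}, and, when $ p \ne 2 $, get an extra factor $ 2 $ at a pole of $ f $.

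By Theorem \ref{thm:torsion}, $ E $ is given by $ \TT(0, f) $ for some $ f \in k(t) \setminus k $, namely
$$ y^2 + xy - fy = x^3 - fx^2, $$
with $ b_2 = 1 - 4f $, $ c_4 = 16f^2 + 16f + 1 $ and $ \Delta = f^4(16f + 1) $. There are distinct places $ v_0 $ and $ v_2 $ with $ v_0(f) > 0 $ and $ v_2(f) < 0 $. At $ v_0 $ we have $ v_0(a_1) = 0 $, $ v_0(a_2) = v_0(a_3) = v_0(f) > 0 $ and $ v_0(\Delta) = 4v_0(f) $. Lemma \ref{lem:odd} then gives $ c_{v_0}(E) = 4v_0(f) $, hence a factor $ 2^2 \mid c(E) $. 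This already proves the case $ p = 2 $.

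Now let $ p \ne 2 $, write $ e := -v_2(f) > 0 $, and let $ \pi $ be a uniformiser at $ v_2 $. Rescaling by $ f $ itself, as in earlier sections, gives $ v_2(\Delta) = -7v_2(f) $, which has the wrong parity. So, imitating Proposition \ref{prop:c2}, I would rescale by a power of $ \pi $ chosen according to the parity of $ e $. I use the change of variables $ (x, y) \mapsto (\pi^{2n}x, \pi^{3n}y) $, which sends $ a_i $ to $ a_i / \pi^{in} $.
\begin{itemize}
\item If $ e $ is even, take $ n = -e/2 $. The new equation is integral, with $ v_2(a_1') = e/2 $, $ v_2(a_2') = 0 $, $ v_2(a_3') = e/2 $ and $ a_4' = a_6' = 0 $. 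The key point is that $ v_2(c_4) = -2e $ because $ 16 \ne 0 $ in $ k $, so $ v_2(c_4') = -2e + 4 \cdot e/2 = 0 $. Also $ v_2(\Delta) = -4e - e = -5e $, so $ v_2(\Delta') = -5e + 6e = e $, which is even and positive. Lemma \ref{lem:even} then gives $ 2 \mid c_{v_2}(E) $.
\item If $ e $ is odd, take $ n = -(e + 1)/2 $. Then $ v_2(a_1') = (e + 1)/2 \ge 1 $, $ v_2(a_2') = -e + (e + 1) = 1 $, $ v_2(a_3') = -e + 3(e + 1)/2 = (e + 3)/2 \ge 2 $, and $ a_4' = a_6' = 0 $. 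Lemma \ref{lem:exceptional} then gives type $ \I_m^* $ reduction with $ c_{v_2}(E) $ even.
\end{itemize}
In either case we get an extra factor $ 2 $, so $ 2^3 \mid c(E) $.

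The main obstacle is this place at infinity, where the reduction type genuinely depends on the parity of $ v_2(f) $. One must check both cases carefully: that $ c_4 $ really becomes a unit in the even case (this is exactly where $ p \ne 2 $ is used), and that the exact valuation hypotheses of Lemma \ref{lem:exceptional} hold in the odd case. Sharpness would then be shown by explicit examples, for instance $ f = t $ over $ \F_3 $ and a suitable $ f $ over $ \F_2 $, verified by computer.
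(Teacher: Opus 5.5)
Your proposal is correct and is essentially the paper's proof. It gets the factor $2^2$ from Lemma \ref{lem:odd} at a zero of $f$, and for $p \ne 2$ a further factor $2$ at a pole of $f$ by rescaling with $\pi^{\lceil -v_2(f)/2 \rceil}$ and splitting on the parity of $v_2(f)$ into Lemma \ref{lem:even} and Lemma \ref{lem:exceptional}, with identical valuation computations; your $n$ is just the negative of the paper's, matching your opposite substitution convention. The only inaccuracy is your motivating aside: rescaling by $f$ gives $v_2(a_1) = -v_2(f) > 0$ and $v_2(c_4) > 0$, so the problem with that model is that it is not of multiplicative type, not a parity issue. This plays no role in the argument.
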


\begin{proof}
By Theorem \ref{thm:torsion}, $ E $ is given by the Weierstrass equation
$$ y^2 + xy - fy = x^3 - fx^2, $$
for some $ f \in k(t) \setminus k $, with $ \Delta = f^4(16f + 1) $. Since $ f \notin k $, there is a place $ v_1 $ such that $ v_1(f) > 0 $. At $ v_1 $,
$$ v_1(a_1) = 0, \qquad v_1(a_2) = v_1(a_3) = v_1(f) > 0, \qquad v_1(\Delta) = 4v_1(f). $$
By Lemma \ref{lem:odd}, $ c_{v_1}(E) = 4v_1(f) $, which gives two factors $ 2 \mid c(E) $.

Let $ p \ne 2 $. There is a place $ v_2 $, distinct from $ v_1 $, such that $ v_2(f) < 0 $. Let $ \pi $ be a uniformiser at $ v_2 $, and let $ n := \lceil -v_2(f) / 2\rceil $, so that $ 2n + v_2(f) \in \{0, 1\} $. Now the change of variables $ (x, y) \mapsto (\pi^{2n}x, \pi^{3n}y) $ transforms $ E $ to the Weierstrass equation
$$ y^2 + \pi^nxy - \pi^{3n}fy = x^3 - \pi^{2n}fx^2, $$
with
$$ c_4 = \pi^{4n}(16f^2 + 16f + 1), \qquad \Delta = \pi^{12n}f^4(16f + 1). $$

\pagebreak

\noindent At $ v_2 $, if $ v_2(f) = -2n $, then Lemma \ref{lem:negative} gives
$$ v_2(c_4) = 4n + 2v_2(f) = 0, \qquad v_2(\Delta) = 12n + 5v_2(f) = 2n, $$
so Lemma \ref{lem:even} gives $ 2 \mid c_{v_2}(E) $, and hence another factor $ 2 \mid c(E) $. If $ v_2(f) = 1 - 2n $, then
$$ v_2(a_1) = n \ge 1, \qquad v_2(a_2) = 2n + v_2(f) = 1, \qquad v_2(a_3) = 3n + v_2(f) \ge 2, $$
so Lemma \ref{lem:exceptional} gives $ 2 \mid c_{v_2}(E) $, and hence another factor $ 2 \mid c(E) $.
\end{proof}

\begin{example}
The elliptic curve $ E / \F_3(t) $ given by the parameterisation in Proposition \ref{prop:c4} with $ f = t^2 + 1 $ has $ c(E) = 2^3 $. The elliptic curve $ E / \F_2(t) $ given by the same parameterisation with $ f = t^8 + t^6 + t^5 + t^4 + 1 $ has $ c(E) = 2^2 $.
\end{example}

\begin{proposition}
\label{prop:c2c2}
Let $ k $ be a finite field of characteristic $ p \ne 2 $, and let $ E / k(t) $ be a non-isotrivial elliptic curve with $ E(k(t))_{\tors} \cong \C_2 \oplus \C_2 $. Then $ 2^3 $ divides $ c(E) $.
\end{proposition}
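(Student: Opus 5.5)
We use the Legendre-type model from Theorem \ref{thm:torsion}, namely
$$ y^2 = x(x - A)(x - B), \qquad A, B \in k(t), $$
with $ c_4 = 16(A^2 - AB + B^2) $ and $ \Delta = 16A^2B^2(A - B)^2 $. The $ j $-invariant depends only on $ \lambda := B / A $, and
$$ j = \dfrac{256(\lambda^2 - \lambda + 1)^3}{\lambda^2(\lambda - 1)^2}. $$
Non-isotriviality forces $ A, B \ne 0 $ and $ \lambda \notin k $. The three expressions $ \lambda $, $ 1 / \lambda $ and $ \lambda - 1 $ (equivalently $ A $, $ B $, $ A - B $ playing symmetric roles) then give three distinct places:
$$ v_1(\lambda) > 0, \qquad v_2(\lambda) < 0, \qquad v_3(\lambda - 1) > 0. $$
These are distinct because $ v_1(\lambda - 1) = 0 $ and so on. The goal is one factor of $ 2 $ at each, exactly as in Proposition \ref{prop:c2}.

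At $ v_1 $ we have $ v_1(B) > v_1(A) $, so $ v_1(A - B) = v_1(A) $. Put $ n := \lfloor v_1(A) / 2 \rfloor $ and rescale by $ (x, y) \mapsto (x / \pi^{2n}, y / \pi^{3n}) $, which divides $ A, B $ by $ \pi^{2n} $.

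If $ v_1(A) = 2n $, the rescaled curve has $ v(c_4) = v(16A^2) - 4n = 0 $ and $ v(\Delta) = 2(v(A) + v(B) + v(A - B)) - 12n $, which is even and positive. Lemma \ref{lem:even} then gives $ 2 \mid c_{v_1}(E) $.

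If $ v_1(A) = 2n + 1 $, expand
$$ x(x - A')(x - B') = x^3 - (A' + B')x^2 + A'B'x. $$
Here $ v(a_2) = v(A' + B') = v(A') = 1 $, and $ v(a_4) = v(A') + v(B') \ge 1 + 2 = 3 $, while $ a_1 = a_3 = a_6 = 0 $. So Lemma \ref{lem:exceptional} applies and $ c_{v_1}(E) $ is even.

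The places $ v_3 $ and $ v_2 $ are handled identically after swapping which root is the dominant one. At $ v_3 $, translate $ x \mapsto x + A $, so the roots become $ 0 $, $ -A $, $ B - A $, with $ v(B - A) > v(A) $; this is the same configuration as at $ v_1 $. At $ v_2 $, the roots are $ 0 $, $ A $, $ B $ with $ v(A) < v(B) $ reversed, so swap the roles of $ A $ and $ B $.

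The key uniform observation is that at each place two of the three root differences share a common smaller valuation $ w $ and the third is strictly larger. After translating one of the two close roots to $ 0 $ and scaling by $ \pi^{\lfloor w / 2 \rfloor} $, either $ w $ is even, giving $ \I_m $ with $ v(c_4) = 0 $ and even $ v(\Delta) $, or $ w $ is odd, giving $ \I_m^* $ via Lemma \ref{lem:exceptional}. Either way the local Tamagawa number is even.

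The main point needing care is the odd case. There one must check that the translated and scaled equation is integral and satisfies $ v(a_4) \ge 3 $, which needs the strict inequality $ v(B') \ge 2 $, and that it satisfies $ v(a_6) \ge 4 $, which holds trivially since $ a_6 = 0 $. Apart from that, the three factors multiply to give $ 2^3 \mid c(E) $.
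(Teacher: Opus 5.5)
Your proof is correct and follows essentially the same route as the paper: the same Legendre model, the same three places (where $B/A$ has a zero, a pole, or is congruent to $1$), and the same parity split on the smaller root-difference valuation, with Lemma \ref{lem:even} handling the even case and Lemma \ref{lem:exceptional} the odd case. The only difference is cosmetic: the paper uses a single translation by $2AB/(A + B)$, with scaling by $\pi^{\lfloor m/2 \rfloor}$ where $m = \min(v(A), v(B))$, valid at all three places at once, whereas you move a close root to $0$ separately at each place, which keeps $a_6 = 0$ and makes the $\I_n^*$ check slightly cleaner.
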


\begin{proof}
By Theorem \ref{thm:torsion}, $ E $ is given by the Weierstrass equation
$$ y^2 = x(x - A)(x - B), \qquad A, B \in k(t). $$
Then $ A, B \ne 0 $ and $ A / B, B / A \notin k $, since
$$ j = \dfrac{256(A^2 - AB + B^2)^3}{A^2B^2(A - B)^2} \notin k, $$
so there are distinct places $ v_1, v_2, v_3 $ such that
$$ v_1(A) > v_1(B), \qquad v_2(A) < v_2(B), \qquad v_3(A - B) > v_3(B). $$
Let $ v $ be any of these places, and let $ \pi $ be a uniformiser at $ v $. Let $ m := \min(v(A), v(B)) $, and let $ n := \lfloor m / 2\rfloor $, so that $ m - 2n \in \{0, 1\} $. Now the change of variables
$$ (x, y) \mapsto \left(\dfrac{x - \tfrac{2AB}{A + B}}{\pi^{2n}}, \dfrac{y}{\pi^{3n}}\right) $$
transforms $ E $ to the Weierstrass equation
$$ y^2 = x^3 - \dfrac{A^2 - 4AB + B^2}{\pi^{2n}(A + B)}x^2 - \dfrac{3AB(A - B)^2}{\pi^{4n}(A + B)^2}x - \dfrac{2A^2B^2(A - B)^2}{\pi^{6n}(A + B)^3}, $$
with
$$ c_4 = \dfrac{16(A^2 - AB + B^2)}{\pi^{4n}}, \qquad \Delta = \dfrac{16A^2B^2(A - B)^2}{\pi^{12n}}. $$
If $ m = 2n $, then
$$
\begin{aligned}
v(c_4) & = v(A^2 - AB + B^2) - 4n = 2m - 4n = 0, \\
v(\Delta) & = 2(v(A) + v(B) + v(A - B) - 6n),
\end{aligned}
$$

\pagebreak

\noindent so Lemma \ref{lem:even} gives $ 2 \mid c_v(E) $. If $ m = 2n + 1 $, then
$$
\begin{aligned}
v(a_2) & = v(A^2 - 4AB + B^2) - 2n - m = m - 2n = 1, \\
v(a_4) & \ge v(A) + v(B) + 2v(A - B) - 4n - 2m \ge 2m - 4n + 1 = 3, \\
v(a_6) & = 2v(A) + 2v(B) + 2v(A - B) - 6n - 3m \ge 3m - 6n + 2 = 5,
\end{aligned}
$$
so Lemma \ref{lem:exceptional} gives $ 2 \mid c_v(E) $. These give three factors $ 2 \mid c(E) $.
\end{proof}

\begin{example}
The elliptic curve $ E / \F_7(t) $ given by the parameterisation in Proposition \ref{prop:c2c2} with $ A = 1 $ and $ B = t^2 + 2 $ has $ c(E) = 2^3 $.
\end{example}

\section{Proofs of main results}
\label{sec:proofs}

\begin{proof}[Proof of Theorem \ref{thm:tamagawa}]
Theorem \ref{thm:torsion} gives a complete list of possible tuples $ (E(k(t))_{\tors}, p) $. For each tuple, there is a corresponding proposition in Sections \ref{sec:odd} to \ref{sec:exceptional}, which shows its corresponding integer $ c $ divides $ c(E) $, and a corresponding example that witnesses sharpness.
\end{proof}

To prove Theorem \ref{thm:general} we will use the raw form polynomials $ F_N(X, Y) \in \Z[X, Y] $ defining the modular curve $ Y_1(N) $ for $ N > 5 $ \cite{Sut}.

\begin{lemma}
\label{lem:modular}
Let $ 5 < N \le 101 $ be a prime, let $ F $ be a discretely valued field with valuation $ v $, and let $ r, s \in F $ such that $ r \ne 0 $. Then $ F_N(X, s) \in \Z[s][X] $ is monic and $ F_N(r, Y) \in \Z[r][Y] $ has leading coefficient $ \pm r^d $ for some $ d \ge 0 $. Furthermore, if $ F_N(r, s) = 0 $, then $ v(s) \ge 0 $ implies $ v(r) \ge 0 $, and $ v(r) = 0 $ implies $ v(s) \ge 0 $.
\end{lemma}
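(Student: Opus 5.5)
The lemma concerns the explicit polynomials $ F_N(X, Y) \in \Z[X, Y] $ of Sutherland, and since there are only finitely many primes $ 5 < N \le 101 $, I would prove it by a finite verification over these $ 21 $ polynomials.

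The first claim is a statement about the shape of the Newton polygon of $ F_N $. Monicity of $ F_N(X, s) $ in $ X $ means that the coefficient of the top power $ X^{a} $, viewed as a polynomial in $ Y $, is the constant $ 1 $. Similarly, the claim on $ F_N(r, Y) $ means that the coefficient of the top power $ Y^{b} $, viewed as a polynomial in $ X $, is a single monomial $ \pm X^d $. I would check both directly from Sutherland's tables (for instance in Magma), by reading off the terms of $ F_N $ of maximal $ X $-degree and of maximal $ Y $-degree.

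For the valuation statements I would use the standard integrality argument. Suppose $ F_N(r, s) = 0 $ and $ v(s) \ge 0 $. Then $ r $ is a root of a monic polynomial whose coefficients lie in $ \Z[s] $, and these coefficients have non-negative valuation. If $ v(r) < 0 $, the leading term $ r^a $ would have strictly smaller valuation than every other term, exactly as in Lemma \ref{lem:negative}. This is a contradiction, so $ v(r) \ge 0 $.

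For the second statement, suppose $ v(r) = 0 $. Then $ s $ is a root of $ F_N(r, Y) $, whose coefficients lie in $ \Z[r] $ and so have non-negative valuation. Its leading coefficient $ \pm r^d $ has valuation $ 0 $. If $ v(s) < 0 $, the leading term would again strictly dominate all the others, which is the same contradiction, so $ v(s) \ge 0 $.

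The hypothesis $ r \ne 0 $ ensures that the leading coefficient $ \pm r^d $ does not vanish, so $ F_N(r, Y) $ has the stated degree. The only real obstacle is the computational verification of the leading-coefficient shapes for every prime $ N \le 101 $. This is a finite check, and it is also where the bound $ N \le 101 $ comes from, since Sutherland's data and Lorenzini's conjecture cover exactly this property.
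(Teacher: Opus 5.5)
Your proposal is correct and follows the paper's proof. Both arguments read the leading-coefficient shapes off Sutherland's database, then obtain each valuation claim by contradiction using the dominant-term argument of Lemma \ref{lem:negative}, applied to $F_N(X, s)$ when $v(s) \ge 0$ and to $F_N(r, Y)$ when $v(r) = 0$. Your remark about where the bound $N \le 101$ comes from is speculative, but the proof does not rely on it.
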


\begin{proof}
By inspecting Sutherland's database \cite{Sut}, both $ F_N(X, s) \in \Z[s][X] $ and $ F_N(r, Y) \in \Z[r][Y] $ have the required leading coefficients. Now assume that $ F_N(r, s) = 0 $. If $ v(s) \ge 0 $ but $ v(r) < 0 $, then every coefficient of $ F_N(X, s) \in \Z[s][X] $ has a non-negative valuation, so Lemma \ref{lem:negative} gives
$$ v(F_N(r, s)) = v(r)\deg_X F_N(X, s) < 0. $$
Similarly, if $ v(r) = 0 $ but $ v(s) < 0 $, then $ v(\pm r^d) = 0 $ and every coefficient of $ F_N(r, Y) \in \Z[r][Y] $ has a non-negative valuation, so Lemma \ref{lem:negative} gives
$$ v(F_N(r, s)) = v(s)\deg_Y F_N(r, Y) < 0. $$
Both of these contradict $ F_N(r, s) = 0 $.
\end{proof}

\begin{proof}[Proof of Theorem \ref{thm:general}]
Let $ N > 5 $. A change of variables translating $ P $ to $ (0, 0) $ transforms $ E $ to the Weierstrass equation in the Tate normal form $ \TT(s(r - 1), rs(r - 1)) $ for some $ r, s \in K $ such that $ F_N(r, s) = 0 $ \cite[Section 2]{Sut12}. If $ s \in k $, then Lemma \ref{lem:modular} shows that $ r $ is algebraic over $ k $, so $ r \in k $ \cite[Lemma 33.8.6]{Stacks}. This contradicts the non-isotriviality of $ E $, so $ s \notin k $, and hence there are distinct places $ v_1 $ and $ v_2 $ such that $ v_1(s) > 0 $ and $ v_2(s) < 0 $. Since $ v_1(s) > 0 $, Lemma \ref{lem:modular} gives $ v_1(r) \ge 0 $. At $ v_1 $,
$$ v_1(a_1) = 0, \qquad v_1(a_2) = v_1(a_3) = v_1(r) + v_1(s) + v_1(r - 1) > 0. $$

\pagebreak

\noindent Since $ v_2(s) < 0 $, Lemma \ref{lem:modular} gives $ v_2(r) \ne 0 $. Let $ \pi $ be a uniformiser at $ v_2 $, let $ m := v_2(r - 1) = \min(v_2(r), 0) \le 0 $, and let $ n := m + v_2(s) < 0 $. Now the change of variables $ (x, y) \mapsto (x / \pi^{2n}, y / \pi^{3n}) $ preserves $ P = (0, 0) $ and transforms $ E $ to the Weierstrass equation
$$ y^2 + \dfrac{1 - s(r - 1)}{\pi^n}xy - \dfrac{rs(r - 1)}{\pi^{3n}}y = x^3 - \dfrac{rs(r - 1)}{\pi^{2n}}x^2. $$
At $ v_2 $,
$$ v_2(a_1) = 0, \qquad v_2(a_2) = v_2(r) - n > 0, \qquad v_2(a_3) = v_2(r) - 2n > 0. $$
By Lemma \ref{lem:odd}, both equations are minimal. In both places $ v $, the rational point $ P = (0, 0) $ of order $ N $ reduces to a singular point of $ \widetilde{E} $ given by $ y^2 + uxy = x^3 $ for some unit $ u \in k_v^\times $, so $ P $ is non-zero in $ E(K_v) / E_0(K_v) $, and hence $ N \mid c_v(E) $.

Now let $ N = 5 $. A change of variables translating $ P $ to $ (0, 0) $ transforms $ E $ to the Weierstrass equation in the Tate normal form $ \TT(f, f) $ for some $ f \in K \setminus k $ \cite[Section 2]{Sut12}. There are distinct places $ v_1 $ and $ v_2 $ such that $ v_1(f) > 0 $ and $ v_2(f) < 0 $. At $ v_1 $,
$$ v_1(a_1) = 0, \qquad v_1(a_2) = v_1(a_3) = v_1(f) > 0. $$
Now the change of variables $ (x, y) \mapsto (x / f^2, y / f^3) $ preserves $ P = (0, 0) $ and transforms $ E $ to the Weierstrass equation
$$ y^2 - \dfrac{f - 1}{f}xy - \dfrac{1}{f^2}y = x^3 - \dfrac{1}{f}x^2. $$
At $ v_2 $, Lemma \ref{lem:negative} gives
$$ v_2(a_1) = 0, \qquad v_2(a_2) = -v_2(f) > 0, \qquad v_2(a_3) = -2v_2(f) > 0. $$
The same argument with Lemma \ref{lem:odd} concludes.
\end{proof}

\renewcommand{\bibliofont}{\scriptsize}
\bibliographystyle{plain}
\bibliography{main}

@article{Ang25,
  author = {David Kurniadi Angdinata},
  title = {{On $ L $-values of elliptic curves twisted by cubic Dirichlet characters}},
  journal = {Canad. J. Math.},
  year = {2025},
  pages = {1--25},
  issn = {1496-4279},
  doi = {10.4153/S0008414X25101065},
  url = {https://doi.org/10.4153/S0008414X25101065},
}

@article{BCP97,
  author = {Wieb Bosma and John Cannon and Catherine Playoust},
  title = {{The Magma algebra system. I. The user language}},
  note = {Computational algebra and number theory (London, 1993)},
  journal = {J. Symbolic Comput.},
  volume = {24},
  year = {1997},
  number = {3-4},
  pages = {235--265},
  issn = {0747-7171},
  doi = {10.1006/jsco.1996.0125},
  url = {http://dx.doi.org/10.1006/jsco.1996.0125},
}

@article{BHPPPSSU20,
  author = {Lisa Berger and Chris Hall and Ren\'e Pannekoek and Jennifer Park and Rachel Pries and Shahed Sharif and Alice Silverberg and Douglas Ulmer},
  title = {{Explicit arithmetic of Jacobians of generalized Legendre curves over global function fields}},
  journal = {Mem. Amer. Math. Soc.},
  volume = {266},
  year = {2020},
  number = {1295},
  pages = {v+131},
  issn = {0065-9266,1947-6221},
  isbn = {978-1-4704-4219-4; 978-1-4704-6253-6},
  doi = {10.1090/memo/1295},
  url = {https://doi.org/10.1090/memo/1295},
}

@article{BR22,
  author = {Alexander Barrios and Manami Roy},
  title = {{Local data of rational elliptic curves with nontrivial torsion}},
  journal = {Pacific J. Math.},
  volume = {318},
  year = {2022},
  number = {1},
  pages = {1--42},
  issn = {0030-8730,1945-5844},
  doi = {10.2140/pjm.2022.318.1},
  url = {https://doi.org/10.2140/pjm.2022.318.1},
}

@article{CP80,
  author = {David Cox and Walter Parry},
  title = {{Torsion in elliptic curves over $ k(t) $}},
  journal = {Compositio Math.},
  volume = {41},
  year = {1980},
  number = {3},
  pages = {337--354},
  issn = {0010-437X,1570-5846},
  url = {https://www.numdam.org/item/CM_1980__41_3_337_0/},
}

@incollection{Gro11,
  author = {Benedict Gross},
  title = {{Lectures on the conjecture of Birch and Swinnerton-Dyer}},
  booktitle = {{Arithmetic of $ L $-functions}},
  series = {IAS/Park City Math. Ser.},
  volume = {18},
  pages = {169--209},
  publisher = {Amer. Math. Soc., Providence, RI},
  year = {2011},
  isbn = {978-0-8218-5320-7},
  doi = {10.1090/pcms/018/08},
  url = {https://doi.org/10.1090/pcms/018/08},
}

@phdthesis{Kru13,
  author = {David Krumm},
  title = {{Quadratic Points on Modular Curves}},
  school = {University of Georgia},
  year = {2013},
  pages = {161},
  isbn = {978-0438-99004-3},
  url = {https://openscholar.uga.edu/record/18468},
}

@article{Lor11,
  author = {Dino Lorenzini},
  title = {{Torsion and Tamagawa numbers}},
  journal = {Ann. Inst. Fourier (Grenoble)},
  volume = {61},
  year = {2011},
  number = {5},
  pages = {1995--2037 (2012)},
  issn = {0373-0956,1777-5310},
  doi = {10.5802/aif.2664},
  url = {https://doi.org/10.5802/aif.2664},
}

@article{Lor25,
  author = {Dino Lorenzini},
  title = {{Torsion and exceptional units}},
  journal = {Acta Arith.},
  volume = {217},
  year = {2025},
  number = {1},
  pages = {19--66},
  issn = {0065-1036,1730-6264},
  doi = {10.4064/aa231009-24-6},
  url = {https://doi.org/10.4064/aa231009-24-6},
}

@article{McD18,
  author = {Robert McDonald},
  title = {{Torsion subgroups of elliptic curves over function fields of genus 0}},
  journal = {J. Number Theory},
  volume = {193},
  year = {2018},
  pages = {395--423},
  issn = {0022-314X,1096-1658},
  doi = {10.1016/j.jnt.2018.05.017},
  url = {https://doi.org/10.1016/j.jnt.2018.05.017},
}

@article{Mel22,
  author = {Mentzelos Melistas},
  title = {{Tamagawa numbers of elliptic curves with torsion points}},
  journal = {Arch. Math. (Basel)},
  volume = {119},
  year = {2022},
  number = {2},
  pages = {155--165},
  issn = {0003-889X,1420-8938},
  doi = {10.1007/s00013-022-01722-4},
  url = {https://doi.org/10.1007/s00013-022-01722-4},
}

@misc{Mel25,
  author = {Mentzelos Melistas},
  title = {{Small Tamagawa numbers of elliptic curves with isogenies or torsion}},
  year = {2025},
  eprint = {2505.20479},
  archiveprefix = {arXiv},
  primaryclass = {math.NT},
  note = {\url{https://arxiv.org/abs/2505.20479}},
}

@article{Naj17,
  author = {Filip Najman},
  title = {{Tamagawa numbers of elliptic curves with $ C_{13} $ torsion over quadratic fields}},
  journal = {Proc. Amer. Math. Soc.},
  volume = {145},
  year = {2017},
  number = {9},
  pages = {3747--3753},
  issn = {0002-9939,1088-6826},
  doi = {10.1090/proc/13553},
  url = {https://doi.org/10.1090/proc/13553},
}

@manual{SageMath,
  author = {William Stein and others},
  organization = {The Sage Development Team},
  title = {{Sage Mathematics Software}},
  note = {\url{http://www.sagemath.org}},
}

@book{Sil09,
  author = {Joseph Silverman},
  title = {{The arithmetic of elliptic curves}},
  series = {Graduate Texts in Mathematics},
  volume = {106},
  edition = {Second},
  publisher = {Springer, Dordrecht},
  year = {2009},
  pages = {xx+513},
  isbn = {978-0-387-09493-9},
  doi = {10.1007/978-0-387-09494-6},
  url = {https://doi.org/10.1007/978-0-387-09494-6},
}

@book{Sil94,
  author = {Joseph Silverman},
  title = {{Advanced topics in the arithmetic of elliptic curves}},
  series = {Graduate Texts in Mathematics},
  volume = {151},
  publisher = {Springer-Verlag, New York},
  year = {1994},
  pages = {xiv+525},
  isbn = {0-387-94328-5},
  doi = {10.1007/978-1-4612-0851-8},
  url = {https://doi.org/10.1007/978-1-4612-0851-8},
}

@misc{Stacks,
  author = {The Stacks project authors},
  title = {{The Stacks project}},
  note = {\url{https://stacks.math.columbia.edu}},
}

@misc{Sut,
  author = {Andrew Sutherland},
  title = {{Raw equations for $ X_1(N) $}},
  note = {\url{https://math.mit.edu/~drew/X1_rawcurves.html}},
}

@article{Sut12,
  author = {Andrew Sutherland},
  title = {{Constructing elliptic curves over finite fields with prescribed torsion}},
  journal = {Math. Comp.},
  volume = {81},
  year = {2012},
  number = {278},
  pages = {1131--1147},
  issn = {0025-5718,1088-6842},
  doi = {10.1090/S0025-5718-2011-02538-X},
  url = {https://doi.org/10.1090/S0025-5718-2011-02538-X},
}

@phdthesis{Szy99,
  author = {Michael Szydlo},
  title = {{Flat regular models of elliptic schemes}},
  school = {Harvard University},
  year = {1999},
  pages = {195},
  isbn = {978-0599-20754-7},
  url = {https://www.szydlo.com/mixchar.pdf},
}

@inproceedings{Tat75,
  author = {John Tate},
  title = {{Algorithm for determining the type of a singular fiber in an elliptic pencil}},
  booktitle = {{Modular functions of one variable, IV (Proc. Internat. Summer School, Univ. Antwerp, Antwerp, 1972)}},
  series = {Lecture Notes in Math., Vol. 476},
  pages = {33--52},
  publisher = {Springer, Berlin-New York},
  year = {1975},
}

@article{Trb22,
  author = {Antonela Trbovi\'c},
  title = {{Tamagawa numbers of elliptic curves with prescribed torsion subgroup or isogeny}},
  journal = {J. Number Theory},
  volume = {234},
  year = {2022},
  pages = {74--94},
  issn = {0022-314X,1096-1658},
  doi = {10.1016/j.jnt.2021.09.007},
  url = {https://doi.org/10.1016/j.jnt.2021.09.007},
}

@incollection{Ulm11,
  author = {Douglas Ulmer},
  title = {{Elliptic curves over function fields}},
  booktitle = {{Arithmetic of $ L $-functions}},
  series = {IAS/Park City Math. Ser.},
  volume = {18},
  pages = {211--280},
  publisher = {Amer. Math. Soc., Providence, RI},
  year = {2011},
  isbn = {978-0-8218-5320-7},
  doi = {10.1090/pcms/018/09},
  url = {https://doi.org/10.1090/pcms/018/09},
}

@article{Ulm14,
  author = {Douglas Ulmer},
  title = {{Explicit points on the Legendre curve}},
  journal = {J. Number Theory},
  volume = {136},
  year = {2014},
  pages = {165--194},
  issn = {0022-314X,1096-1658},
  doi = {10.1016/j.jnt.2013.09.010},
  url = {https://doi.org/10.1016/j.jnt.2013.09.010},
}

@article{Ulm19,
  author = {Douglas Ulmer},
  title = {{On the Brauer-Siegel ratio for abelian varieties over function fields}},
  journal = {Algebra Number Theory},
  volume = {13},
  year = {2019},
  number = {5},
  pages = {1069--1120},
  issn = {1937-0652,1944-7833},
  doi = {10.2140/ant.2019.13.1069},
  url = {https://doi.org/10.2140/ant.2019.13.1069},
}

\end{document}